\DeclareRobustCommand{\SkipTocEntry}[5]{}}{%
\DeclareRobustCommand{\SkipTocEntry}[4]{}}
\newenvironment{pf}{\proof[\proofname]}{\endproof}
\theoremstyle{plain}
\newtheorem{thm}{Theorem}[section]
\newtheorem{prop}[thm]{Proposition}
\newtheorem{cor}[thm]{Corollary}
\newtheorem{lem}[thm]{Lemma}
\newtheorem{defn}[thm]{Definition}
\numberwithin{equation}{section}
\numberwithin{figure}{section}
\theoremstyle{definition}
\newtheorem{ex}[thm]{Example}
\newtheorem{rem}[thm]{Remark}
\newcommand{\rk}{{\mathrm{rk}}}
\newcommand{\setcond}[2]{{\left\{ #1 \,:\, #2 \right\}}}
\DeclareMathOperator{\conv}{conv}
\newcommand{\Km}{{\mathcal{K}}}
\newcommand{\Pm}{{\mathcal{P}}}
\newcommand{\PMV}{{\mathcal{PMV}}}
\newcommand{\MV}{{\mathcal{MV}}}
\newcommand{\cal}[1]{\mathcal{#1}}
\newcommand{\C}{\mathbb C}
\newcommand{\Z}{\mathbb Z}
\newcommand{\R}{\mathbb R}
\newcommand{\Ss}{\mathbb S}
\newcommand{\N}{\mathbb N}
\newcommand{\G}{\Gamma}
\newcommand{\cI}{\cal I}
\newcommand{\cL}{\cal L}
\newcommand{\T}{{\mathbb T}}
\newcommand{\sig}{\sigma}
\DeclareMathOperator{\Pl}{P}
\DeclareMathOperator{\KG}{\mathbf K}
\DeclareMathOperator{\V}{V}
\DeclareMathOperator{\Vol}{Vol}
\DeclareMathOperator{\M}{M}
\DeclareMathOperator{\GL}{GL}
\DeclareMathOperator{\Sym}{\mathbf S}
\newcommand{\rs}[1]{Section~\ref{S:#1}}
\newcommand{\rl}[1]{Lemma~\ref{L:#1}}
\newcommand{\rp}[1]{Proposition~\ref{P:#1}}
\newcommand{\rex}[1]{Example~\ref{Ex:#1}}
\newcommand{\re}[1]{(\ref{e:#1})}
\newcommand{\rc}[1]{Corollary~\ref{C:#1}}
\newcommand{\rt}[1] {Theorem~\ref{T:#1}}
\newcommand{\rf}[1]{Figure~\ref{F:#1}}
\title[Pl\"ucker-type inequalities for mixed areas and intersection numbers of curves]{
Pl\"ucker-type inequalities for mixed areas and intersection numbers of curve arrangements}
\author{Gennadiy~Averkov}
\address{Fakult\"at 1, BTU Cottbus-Senftenberg, Platz der Deutschen Einheit 1, 03046 Cottbus, Germany}
\email{averkov@b-tu.de}
\author{Ivan~Soprunov}
\address{Department of Mathematics and Statistics, Cleveland State University,  2121 Euclid Ave, Cleveland, Ohio, 44115 USA}
\email{i.soprunov@csuohio.edu}
\begin{document}
\selectlanguage{english}

\date{}

\keywords{Geometric inequalities, mixed volume, tropical geometry, toric geometry, intersection theory, BKK theorem}
\subjclass[2020]{Primary 52A39; Secondary 52A40,14C17,14M25,14T15}

\begin{abstract} Any collection of $n$ compact convex planar sets $K_1,\dots, K_n$ defines a vector of
${n\choose 2}$ mixed areas $\V(K_i,K_j)$ for $1\leq i<j\leq n$. We show that for $n\geq 4$
these numbers satisfy certain Pl\"ucker-type inequalities. Moreover, we prove 
that for $n=4$ these inequalities completely describe the space of all mixed area vectors
$(\V(K_i,K_j)\,:\,1\leq i<j\leq 4)$. For arbitrary $n\geq 4$ we show that this space has a semialgebraic closure
of full dimension. 
As an application, we show that the pairwise intersection numbers of any collection of $n$ tropical curves satisfy the 
Pl\"ucker-type inequalities. Moreover, in the case of four tropical curves, any homogeneous polynomial relation between their six intersection numbers follows from the corresponding Pl\"ucker-type inequalities.
\end{abstract}

\maketitle

\section{Introduction}

Let $K_1,\ldots,K_n \subset \R^d$ be a collection of nonempty compact convex sets and consider
their Minkowski linear combination with some non-negative real coefficients
$$\alpha_1 K_1+\dots+\alpha_nK_n := \setcond{ \alpha_1 p_1+\dots+\alpha_np_n }{p_i \in K_i}.$$ 

A fundamental result in the theory of convex sets asserts that the $d$-dimensional volume 
$\Vol_d( \alpha_1 K_1+\dots+\alpha_nK_n)$ 
 is a homogeneous polynomial in $\alpha_1,\ldots,\alpha_n$ of degree~$d$. This polynomial can be written as 
 \begin{equation}\label{e:mixed-polynomial}
 \Vol_d( \alpha_1 K_1+\dots+\alpha_nK_n)=\sum_{1 \le i_1,\ldots,i_n \le d}\! \V(K_{i_1},\ldots,K_{i_d})\,\alpha_{i_1} \cdots \alpha_{i_d},
 \end{equation} 
 where the coefficients $\V(K_{i_1},\ldots,K_{i_d})$ are chosen to be symmetric in the $d$ sets they depend on, see \cite[Th 5.1.7]{Schneider2014}.
 
 This gives rise to the notion of the mixed volume: a symmetric functional which sends every $d$-tuple of compact convex sets  $(K_1,\ldots,K_d)$ to
the coefficient $\V(K_1,\ldots,K_d)$. 
Mixed volumes are a unifying concept behind various metric or measure-related functionals considered in the theory of convex sets and
stochastic geometry, \cite{SantaloBook}.
Remarkably, they also appear in algebraic geometry due to the interpretation of the mixed volumes of lattice polytopes in terms of the number of solutions of sparse polynomial systems, provided by the celebrated Bernstein-Khovanskii-Kushnirenko (BKK) theorem \cite{Bernstein75, Kh78, Ku76}. More recently, mixed volumes 
have become one of the key objects in tropical intersection theory \cite{BB13,MaclaganSturmfels15}.

Understanding relationships between mixed volumes in terms of inequalities is a central topic in the theory of mixed volumes. While some of the inequalities, like the Aleksandrov-Fenchel inequality, have long been known and widely used, it appears that many inequalities
are yet to be discovered. Currently, there is no complete understanding of all relations between mixed volumes, besides the cases of small $n$ in 
dimension $d=2$.

The problem that we address in this paper goes back to the work of Heine \cite{heine1938wertvorrat} and
Shephard \cite{Shephard1960}, whereas the format of the problem is in the spirit of the so-called Blaschke-Santal\'o diagrams, \cite{Blaschke, Santalo61}. Having a family of functionals $\phi_1,\ldots,\phi_m : S \to \R$ on a certain space $S$, one can ask for
all relationships between these functionals, for example, by providing an inequality description for the set 
$D=\setcond{(\phi_1(s),\ldots,\phi_m(s))}{s \in S}$.
Santal\'o called $D$ the {\it diagram} for the functionals $\phi_1,\ldots,\phi_m$ and studied such 
diagrams for the space of planar compact convex sets and triples of functionals common in convexity theory (such as area, perimeter, diameter, width etc.), see \cite{Santalo61}. If an inequality description of $D$ can be established, one has a complete system of best-possible inequalities that link 
$\phi_1,\ldots,\phi_m$. 

Our ultimate goal is to understand diagrams of mixed volume functionals on the space of $n$-tuples $K=(K_1,\ldots,K_n)$ of compact convex sets in $\R^d$. There is a total of $\binom{n+d-1}{d}$ mixed volumes in \re{mixed-polynomial} obtained by choosing $d$ out of the $n$ convex sets
with repetitions allowed. We call the corresponding diagram $\MV(n,d)\subset \R^{\binom{n+d-1}{d}}$ the {\it mixed volume configuration space}.
If we do not allow repetitions, there are  $\binom{n}{d}$ mixed volumes which form, what we call, the 
{\it pure mixed volume configuration space} $\PMV(n,d)\subset \R^{\binom{n}{d}}$.

In this paper we focus on dimension $d=2$, which already offers many challenges. In this case, the elements of 
$\PMV(n,2)$ are the pure mixed areas $\V(K_i,K_j)$ for $1 \le i < j \le n$. Together with the usual areas 
$\Vol_2(K_i)=\V(K_i,K_i)$ for $1\leq i\leq n$ they form the space $\MV(n,2)$.
At the moment, a description of $\MV(n,2)$ in terms of inequalities is only known for $n \le 3$.
For $n=2$, it is not hard to see that $\MV(2,2)$ is described by the Minkowski inequality, see \rex{n=2}. In \cite{heine1938wertvorrat} Heine
gave a complete description for $\MV(3,2)$ in terms of the Minkowski inequalities and determinantal inequalities. All other cases remain open.
 
 
We summarize the main results of this paper below.

\begin{enumerate}
	\item We introduce new inequalities for mixed areas, which we call Plücker-type inequalities, as they resemble the Plücker relations for the Grassmannian. The distinctive feature of the Plücker-type inequalities is that they only involve pure mixed areas, that is, none of the mixed volume functionals contains a repetition of a set. To the best of our knowledge, no  inequalities  of such a form have been previously known.
	
	\item We prove that the Plücker-type inequalities provide a complete description of $\PMV(n,2)$ in terms of inequalities for $n=4$,
	but are not enough for $n\geq 8$. Whether they provide a complete description when $n=5,6,7$ is open.
	
	\item A priori, it is not clear whether $\MV(n,2)$ always has a description in algebraic terms, although the semialgebraicity of $\MV(n,2)$ looks like a plausible conjecture. For example, all known tight inequalities 
	in convexity always turn out to be polynomial inequalities. 
	While we leave the verification of the semialgebraicity of $\MV(n,2)$ as an open problem, we can at least confirm that
	$\PMV(n,2)$ (being a projection of $\MV(n,2)$) has the closure that is a semialgebraic set. 
	\item We show that the sets $\MV(n,2)$ and $\PMV(n,2)$ have full topological dimension. In particular, this means that there are no polynomial equations that the coefficients of the mixed volume polynomial  \re{mixed-polynomial} satisfy.
	\item According to the BKK theorem and its tropical analog, the (normalized) mixed area has an interpretation as the intersection number
	of two generic algebraic curves in $(\C\setminus\{0\})^2$ with given Newton polytopes, as well as the intersection number of two tropical curves intersecting transversally. 
	Our results have the following application in this setting. 
	Every collection of $n$ (tropical) curves defines a vector of ${n \choose 2}$ pairwise intersection numbers, so one can consider the
	configuration space $\cI(n,2)$ of all such vectors. We show that the smallest (with respect to inclusion) positively homogeneous closed
	set containing $\cI(n,2)$ is semialgebraic. Moreover, for $n=4$ it is described by three Plücker-type inequalities.
\end{enumerate} 

The scope of our studies can be naturally extended to studying $\MV(n,d)$ and $\PMV(n,d)$ in a general dimension $d$. This leads to even more challenging problems. For example, there is no complete description available
neither for $\PMV(5,3)$ nor for $\MV(3,3)$. The volume polynomial \re{mixed-polynomial} is a special case of a class of polynomials, called Lorenzian polynomials, which have been extensively studied in connection with algebraic geometry and algebraic combinatorics, see \cite{BH20}. This provides another motivation for studying polynomial relations between the coefficients of the volume polynomial. 

\subsection*{Acknowledgments} A large portion of this work was completed during the second author's visit to Germany in Fall 2021. 
He is grateful to the first author's family, as well as MathCoRe Magdeburg and BTU Cottbus faculty and staff, for their hospitality that made this visit productive and pleasant. Both authors are thankful to Chris Borger and Tobias Boege for many fruitful discussions. In particular, a big thank you to Tobias Boege for posing the question of dimension of the mixed volume configuration spaces and for constant interest in this work. We also thank Bernd Sturmfels for pointing out related research on Lorentzian polynomials. Our computer calculations were done using SageMath \cite{sagemath}.
\section{Preliminaries}

In this section we introduce the main objects of study  and provide some preliminary results.

Throughout the paper we use $[n]$ for the set $\{1,\dots,n\}$. 
Also, we will often index the elements of $\R^{n\choose 2}$ using a pair of indices, either as $v_{i,j}$ or, simply, $v_{ij}$ for all $\{i,j\}\subset [n]$.

Let $\Km_d$ be the set of all nonempty compact convex subsets of $\R^d$.
For $K\in\Km_d$ let $\Vol_d(K)$  denote the Euclidean $d$-dimensional volume of $K$. Given two subsets $A,B$ of $\R^d$, their {\it Minkowski sum} $A+B$ is the vector sum $A+B=\{a+b \,:\, a\in A, b\in B\}$. 

The {\it mixed volume} $\V(K_1,\dots, K_d)$ is the unique symmetric and multilinear (with respect to Minkowski addition) function on $d$ elements $K_1,\dots, K_d$ of $\Km_d$, which coincides with $\Vol_d$ on the diagonal, i.e.,
$\V(K,\dots, K)=\Vol_d(K)$ for any $K\in\Km_d$. One can write an explicit ``polarization'' formula for the mixed volume \cite[Sec 5.1]{Schneider2014}:
\begin{equation}\label{e:polarization}
\V(K_1,\dots,K_d)=\frac{1}{d!}\sum_{\ell=1}^d(-1)^{d+\ell}\!\sum_{i_1<\dots<i_\ell}\Vol_d(K_{i_1}+\dots+K_{i_\ell}).
\end{equation}

In particular, for $d=2$ we have 
\begin{equation}\label{e:polarization-2}
\V(K_1,K_2)=\frac{1}{2}\left(\Vol_2(K_1+K_2)-\Vol_2(K_1)-\Vol_2(K_2)\right).
\end{equation}

Some of the basic properties of the mixed volume we will need are the following, see \cite[Sec 5.1]{Schneider2014}: 
\begin{itemize}
\item[(a)] $\V(K_1,\dots, K_d)$ is non-negative,
\item[(b)] $\V(K_1,\dots, K_d)$ is monotone with respect to inclusion in each entry,
\item[(c)] $\V(K_1,\dots, K_d)$ is invariant under independent translation of each $K_i$,
\item[(d)] $\V(\phi(K_1),\dots, \phi(K_d))=|\det(\phi)|\V(K_1,\dots, K_d)$ for any linear transformation $\phi:\R^d\to\R^d$.
\end{itemize}

Let $K\in\Km_d$. Recall that the {\it support function} 
$h_K:\Ss^{d-1}\to\R$ of $K$ is defined by 
$$h_K(u)=\max\{\langle u, x\rangle\,:\,x \in K\},$$
where $\Ss^{d-1}$ is the $(d-1)$-dimensional unit sphere and $\langle u, x\rangle$  is the standard inner product in $\R^d$.
Consider a convex polytope $P\subset\R^d$.
Let $U_P$ be the set of the outer normals to the facets of $P$ and 
$P^{u}$ be the facet with the outer unit normal $u$. Then (see, for example, \cite[Eq. (5.2)]{Schneider2014})
\begin{equation}\label{e:vol-sum}
\Vol_d(P)=\frac{1}{d}\sum_{u\in U_P} h_P(u) \Vol_{d-1}(P^{u}).
\end{equation}
Geometrically, \re{vol-sum} expresses the volume of $P$ as the sum of the volumes of pyramids over the facets of $P$ with the same apex lying in the interior of $P$. There is a similar formula for the mixed volume $\V(K_1,P_2,\dots, P_d)$ where
$K_1\in\Km_d$ and $P_2,\dots, P_d$ are convex polytopes. 
Let $U_{P_2,\dots, P_d}$ be the set of the outer unit normals to the facets of $P_2+\dots+P_d$ and let
$P_i^{u}=\max\{\langle u, x\rangle\,:\,x \in P_i\}$ be the face of $P_i$ corresponding to $u$, for $2\leq i\leq d$.
Then 
\begin{equation}\label{e:mix-sum}
\V(K_1,P_2,\dots, P_{d})=\frac{1}{d}\!\sum_{u\in U_{P_2,\dots, P_d}}\!h_{K_1}(u)\V(P_2^{u},\dots, P_{d}^{u}),
\end{equation}
see, for example, \cite[Eq. (5.23)]{Schneider2014}.
For $d=2$, \re{mix-sum} becomes particularly simple:
\begin{equation}\label{e:mix-sum-2}
\V(K,P)=\frac{1}{2}\sum_{u\in U_P}h_{K}(u)|P^{u}|,
\end{equation}
where the sum is over the outer normals to the sides of $P$ and $|P^u|=\Vol_1(P^{u})$ is the length of the side~$P^{u}$.

We will need the following simple observations. We say a convex set $K$ is {\it inscribed} in a polygon $P$ if
$K\subseteq P$ and every side of $P$ has a nonempty intersection with $K$.

\begin{lem}\label{L:inscribed} 
Let $K,P\in\Km_2$ where $P$ is a convex polygon. If $K$ is inscribed in $P$ then $$\V(K,P)=\Vol_2(P).$$
\end{lem}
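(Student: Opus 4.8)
The plan is to compare the two summation formulas that the paper has already recorded and to show they agree term by term once the inscribed hypothesis is used. On the one hand, \re{mix-sum-2} gives
\[
\V(K,P)=\frac{1}{2}\sum_{u\in U_P}h_K(u)\,|P^u|,
\]
and on the other hand the case $d=2$ of \re{vol-sum} gives
\[
\Vol_2(P)=\frac{1}{2}\sum_{u\in U_P}h_P(u)\,|P^u|.
\]
Both sums range over the same index set $U_P$ of outer normals to the sides of $P$, and the side lengths $|P^u|$ are literally the same in the two expressions. Hence the whole lemma reduces to the single claim that $h_K(u)=h_P(u)$ for every $u\in U_P$.

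To establish this claim I would argue both inequalities separately. The inequality $h_K(u)\le h_P(u)$ holds for \emph{all} $u\in\Ss^1$ simply because $K\subseteq P$ and the support function is monotone under inclusion. For the reverse inequality I fix a facet normal $u\in U_P$ and invoke the inscribed condition: since $K$ meets the side $P^u$, I may choose a point $y\in K\cap P^u$. As $y$ lies on the side $P^u$, it maximizes $\langle u,\cdot\rangle$ over $P$, so $\langle u,y\rangle=h_P(u)$; and since $y\in K$, we get $h_K(u)\ge\langle u,y\rangle=h_P(u)$. Combining the two inequalities yields $h_K(u)=h_P(u)$ for each $u\in U_P$, and substituting this into the formula for $\V(K,P)$ turns it into the formula for $\Vol_2(P)$.

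There is essentially no hard step here; the only substantive point is the equality of the support functions along the facet normals of $P$, which is exactly where the inscribed hypothesis enters. It is worth emphasizing that one does \emph{not} need $h_K=h_P$ in all directions — $K$ may be strictly smaller than $P$ in the directions that are not facet normals — and this is precisely why \re{mix-sum-2} is the right tool: it weights $h_K$ only against the side data of $P$, so it never sees $h_K$ in the directions where $K$ and $P$ disagree.
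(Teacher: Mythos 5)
Your argument is correct and is essentially the paper's own proof: both compare \re{vol-sum} at $d=2$ with \re{mix-sum-2} and reduce to $h_K(u)=h_P(u)$ for each facet normal $u$ of $P$. You merely spell out the two-inequality justification of that support-function equality, which the paper leaves implicit.
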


\begin{pf} Since $K$ is inscribed in $P$ we have $h_K(u)=h_P(u)$ for every outer unit normal $u$ to a side of $P$. The lemma now follows 
by comparing \re{vol-sum} for $d=2$ and \re{mix-sum-2}.
\end{pf}

Recall that the {\it width} of $K$ in the direction of unit vector $u$ 
is defined by  $$w_u(K)=\max\{\langle u,x\rangle:x\in K\}-\min\{\langle u,x\rangle:x\in K\}.$$
We have 

\begin{lem}\label{L:width} 
Let $K,I\in\Km_2$ where $I$ is a segment of length $\lambda$, orthogonal to $u\in\Ss^1$. Then $$\V(K,I)=\frac{1}{2} w_u(K)\lambda.$$
\end{lem}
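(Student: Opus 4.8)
The plan is to evaluate $\V(K,I)$ directly through the mixed-area formula \re{mix-sum-2}, in the same spirit as the proof of \rl{inscribed}, by treating the segment $I$ as a degenerate polygon. Since $I$ is orthogonal to $u$, every point of $I$ has the same inner product with $u$; hence, viewed as a one-dimensional polytope, $I$ has exactly two faces of maximal dimension, both equal to $I$ itself, carrying the outer unit normals $u$ and $-u$ and sharing the common length $\lambda$. Substituting $P=I$ into \re{mix-sum-2} should then yield
\begin{equation*}
\V(K,I)=\frac{1}{2}\bigl(h_K(u)\,\lambda+h_K(-u)\,\lambda\bigr)=\frac{\lambda}{2}\bigl(h_K(u)+h_K(-u)\bigr).
\end{equation*}
To finish, I would unfold the definition of the support function: $h_K(-u)=\max\{\la -u,x\ra:x\in K\}=-\min\{\la u,x\ra:x\in K\}$, so that $h_K(u)+h_K(-u)=\max\{\la u,x\ra:x\in K\}-\min\{\la u,x\ra:x\in K\}=w_u(K)$. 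Plugging this in gives the asserted identity $\V(K,I)=\tfrac12 w_u(K)\lambda$.

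The one step that genuinely requires care --- and the main obstacle --- is that \re{mix-sum-2} was stated for a full-dimensional polygon $P$, whereas $I$ is degenerate, so the bookkeeping of ``sides'' and ``outer normals'' must be justified. I would make this rigorous by approximation: replace $I$ by the thin rectangle $R_\e=I+[0,\e u]$, a genuine polygon that converges to $I$ in the Hausdorff metric as $\e\to0$. Applying \re{mix-sum-2} to $R_\e$, its two long sides (length $\lambda$, normals $\pm u$) contribute $\tfrac{\lambda}{2}\bigl(h_K(u)+h_K(-u)\bigr)$, while its two short sides (length $\e$, normals orthogonal to $u$) contribute a term of order $\e$. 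Letting $\e\to0$ and invoking the continuity of the mixed volume in the Hausdorff metric --- itself a consequence of \re{polarization-2} and the continuity of $\Vol_2$ under Minkowski addition --- recovers the formula without any degeneracy ambiguity.

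Alternatively, and arguably more cleanly, I could avoid the formula \re{mix-sum-2} altogether and argue from the polarization identity \re{polarization-2}. As $I$ is a segment, $\Vol_2(I)=0$, so $\V(K,I)=\tfrac12\bigl(\Vol_2(K+I)-\Vol_2(K)\bigr)$. Choosing coordinates with $u$ as the second basis vector and slicing $K$ by lines orthogonal to $u$, the Minkowski sum with $I$ lengthens each nonempty slice by exactly $\lambda$ in the direction of $I$; a one-line Fubini computation then gives $\Vol_2(K+I)=\Vol_2(K)+\lambda\,w_u(K)$, where the range of the $u$-coordinate over $K$ is precisely $w_u(K)$. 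This immediately yields $\V(K,I)=\tfrac12 w_u(K)\lambda$ and sidesteps the degeneracy issue entirely.
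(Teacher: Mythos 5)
Your main argument is exactly the paper's proof: apply \re{mix-sum-2} with the segment's two outer normals $u$ and $-u$ and use $h_K(u)+h_K(-u)=w_u(K)$. The paper simply invokes \re{mix-sum-2} directly without the thin-rectangle approximation or the alternative polarization/Fubini route, but those additions are sound and only supply extra justification.
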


\begin{pf} This follows directly from \re{mix-sum-2} since the outer normals of $I$ are $\{u,-u\}$ and
$h_{K}(u)=\max\{\langle u,x\rangle:x\in K\}$ and $h_{K}(-u)=\max\{\langle -u,x\rangle:x\in K\}=-\min\{\langle u,x\rangle:x\in K\}$.
\end{pf}

Now consider an $n$-tuple $K=(K_1,\dots, K_n)$ of sets in $\Km_d$. Then any choice of $n$ non-negative integers
$d_1,\dots, d_n$ satisfying $d_1+\dots+d_n=d$ gives rise to a value of the mixed volume
$$\V_K(d_1,\dots, d_n):=\V( \underbrace{K_1,\ldots,K_1}_{d_1},\ldots,\underbrace{K_n,\ldots,K_n}_{d_n}).$$
There are ${n+d-1\choose d}$ such choices which  produce the vector 
$$\V_K:=\big( \V_K(d_1,\dots, d_n)\,:\,d_1+\dots+d_n=d\, \big)\in\R^{d+n-1\choose d}$$
which we call the {\it mixed volume configuration vector} corresponding to $K=(K_1,\dots, K_n)$. For example, when $n=2$, $d=3$, and 
$K=(K_1,K_2)\in{\Km_3}^2$, we obtain the vector
$$\V_K=\Big( \V_K(3,0), \V_K(2,1), \V_K(1,2), \V_K(0,3)\Big)\in\R^4$$
where the first and the last entries are the 3-dimensional volumes of $K_1$ and $K_2$, and the middle ones are $\V(K_1,K_1,K_2)$ and $\V(K_1,K_2,K_2)$, respectively.

\begin{defn}\label{D:config}
The {\it mixed volume configuration space} is the set of all mixed volume configuration vectors over all possible $n$-tuples $K\in {\Km_d}^{\!n}$
$$\MV(n,d)=\left\{ \V_K\in\R^{d+n-1\choose d}\,:\,K\in {\Km_d}^{\!n}\right\}.$$
\end{defn}

\begin{ex}\label{Ex:n=2} Let $n=d=2$. According to the Minkowski inequality \cite[Th 7.2.1]{Schneider2014},
\begin{equation}\label{e:Mink}
\V(K_1,K_2)^2\geq \Vol_2(K_1)\Vol_2(K_2)
\end{equation} 
for any $K_1,K_2\in\Km_2$. This implies that 
$$\MV(2,2)\subseteq \left\{(v_{11},v_{12},v_{22})\in\R_{\geq 0}^3\,  :\,  v_{12}^2\geq v_{11}v_{22}\right\}.$$
It is not hard to see that the above inclusion is, in fact, equality. For this it is enough to 
take $K_1,K_2$ to be rectangular boxes, $K_1=[0,a]\times[0,b]$ and $K_2=[0,c]\times[0,d]$. In this case
$\Vol_2(K_1)=ab$, $\V(K_1,K_2)=\frac{1}{2}(ad+bc)$, and $\Vol_2(K_2)=cd$.
One can then check that for any non-negative $(v_{11},v_{12},v_{22})$ satisfying the Minkowski inequality, the system
$\{ ab=v_{11}, ad+bc=2v_{12}, cd=v_{22}\}$ reduces to a quadratic equation with discriminant  $v_{12}^2- v_{11}v_{22}$ and has a 
solution $(a,b,c,d)\in\R_{\geq 0}^4$.
%

\end{ex}




We next introduce the main object of this paper, the pure mixed volume configuration space $\PMV(n,d)$.

\begin{defn}\label{D:pure}  Given an $n$-tuple $(K_1,\dots, K_n)$ of convex sets in $\Km_d$, define the {\it pure mixed volume configuration vector} 
\begin{equation}\label{e:pure-config-vector}
W_K:=\big( \V(K_{i_1},\dots, K_{i_d})\,:\,\{i_1,\dots, i_d\}\subset [n] \big)\in\R^{n\choose d}.
\end{equation}
The {\it pure mixed volume configuration space} is the set of all pure mixed volume configuration vectors over all possible $n$-tuples $K\in {\Km_d}^n$
$$\mathcal{PMV}(n,d)=\left\{ W_K\in\R^{n\choose d}\,:\,K\in {\Km_d}^{\!n}\right\}.$$
\end{defn}
Note that $\mathcal{PMV}(n,d)$ is empty for $n<d$ and $\mathcal{PMV}(d,d)=\R_{\geq 0}$. Also, for $n\geq d$, $\mathcal{PMV}(n,d)$ is the projection of
$\MV(n,d)$ onto a coordinate subspace. 

\begin{ex}\label{Ex:PMV-3-2}
The first non-trivial case is the case of three planar convex sets, i.e. $n=3$ and $d=2$.
In this case $\mathcal{PMV}(3,2)=\R_{\geq 0}^3$. This can be realized using segments and points only. 
Indeed, consider a vector $(v_{12},v_{13},v_{23})\in\R_{\geq 0}^3$. We need to choose
$K_1,K_2,K_3$ in $\Km_2$ such that $v_{ij}=\V(K_i,K_j)$ for all $1\leq i<j\leq 3$.
 If all $v_{ij}=0$ we may take $K_1=K_2=K_3=\{0\}$. 
If $v_{12}>0$, $v_{13}\geq 0$ and $v_{23}=0$  we take
$K_1=[0,\sqrt{2}v_{12}e_1]$, $K_2=[0,\sqrt{2}e_2]$, and $K_3=[0,\sqrt{2}(v_{13}/v_{12})e_2]$. 
Finally, when all $v_{ij}>0$ we may take
$$K_1=\left[0,\frac{\sqrt{2v_{12}v_{13}v_{23}}}{v_{23}}e_1\right],\  K_2=\left[0,\frac{\sqrt{2v_{12}v_{13}v_{23}}}{v_{13}}e_2\right],\ K_3=\left[0,\frac{\sqrt{2v_{12}v_{13}v_{23}}}{v_{12}}(e_1+e_2)\right].$$
It is not hard to extend this argument to show that $\mathcal{PMV}(n,n-1)=\R_{\geq 0}^{n}$ for any $n\geq 3$. 
\end{ex}

\section{Statements of main results}

It is known that if $n\geq 2$ and $d\geq 2$ then $\MV(n,d)$ is a proper subset of $\R_{\geq 0}^{n+d-1\choose d}$ as there are many algebraic inequalities relating the mixed volumes (the entries of $\V_K$). One of the most important ones is the Aleksandrov-Fenchel inequality:
\begin{equation}\label{e:AF}\tag{AF}
\V(A,B,K_3,\dots, K_d)^2\geq \V(A,A,K_3,\dots, K_d)\V(B,B,K_3,\dots, K_d),
\end{equation}
for any $K_1,\dots,K_n\in\Km_d$, \cite[Ch 7]{Schneider2014}. It generalizes the Minkowski inequality \re{Mink}, as well as the classical isoperimetric and Brunn-Minkowski inequalities. However, starting with $n\geq 3$ these inequalities do not describe $\MV(n,d)$ completely, and more inequalities are needed, see for example,
\cite[Cor 4.2]{ABS20}. When we consider the pure mixed volume configuration space $\mathcal{PMV}(n,d)$ the Aleksandrov-Fenchel inequalities are absent,
as the mixed volumes in the right hand side of \re{AF} are not part of the coordinates of $W_K$.


As we saw in \rex{PMV-3-2}, for $n=3$ the space $\mathcal{PMV}(n,2)$ is the entire positive orthant $\R_{\geq 0}^3$.
However, starting with $n\geq 4$ it is a proper subset of the orthant $\R_{\geq 0}^{n\choose 2}$, as the mixed areas
satisfy certain inequalities,
which we call the Pl\"ucker-type inequalities. Below and throughout the paper $A\sqcup B$ denotes the disjoint union of sets $A$ and $B$.

\begin{defn}\label{D:PTE}
 The  {\it Pl\"ucker-type inequalities} on $\R_{\geq 0}^{n\choose 2}$ are
\begin{equation}\label{e:PTE}\tag{PT}
v_{ij}v_{kl}\leq v_{ik}v_{jl}+v_{il}v_{jk}\ \ \text{ for all }\ \{i,j\}\sqcup\{k,l\}\subseteq[n].
\end{equation}
\end{defn}
 For $n=4$ we have exactly three such inequalities, which one can also interpret as the triangle inequalities for the three products
$v_{12}v_{34}$, $v_{13}v_{24}$, and $v_{14}v_{23}$:
\begin{equation}\label{e:PTE-4}
\begin{matrix}
\,\,\, v_{12}v_{34}+v_{13}v_{24}-v_{14}v_{23}\geq 0\\
\,\, \, v_{12}v_{34}-v_{13}v_{24}+v_{14}v_{23}\geq 0\\
-v_{12}v_{34}+v_{13}v_{24}+v_{14}v_{23}\geq 0.
\end{matrix}
\end{equation}
Clearly,  the Pl\"ucker-type inequalities are invariant under the permutations of $\{1,2,3,4\}$. 

Now let $K_1,\dots, K_4$ be nonempty compact convex sets in $\R^2$ and let $V_{ij}$
denote the mixed area $\V(K_i,K_j)$. 
One of the key observations is that any linear combination of the form
$$\pm V_{12}V_{34}\pm V_{13}V_{24}\pm V_{14}V_{23}$$ is multilinear, that is it is Minkowski-additive 
 in each $K_i$ and is homogeneous with respect to rescaling each $K_i$ by a positive real factor. 
This property does not hold for expressions involving mixed volumes with repetitions, as in \re{AF}.

Here is the first main result of the paper.

\begin{thm}\label{T:PMV-4-2} 
We have
\begin{equation}\label{e:main}
\mathcal{PMV}(4,2)= \left\{v\in\R_{\geq 0}^{4\choose 2} \,:\, 
v_{ij}v_{kl}\leq v_{ik}v_{jl}+v_{il}v_{jk}\ \text{\rm for }\{i,j\}\sqcup\{k,l\}=[4] \right\}.
\end{equation}
\end{thm}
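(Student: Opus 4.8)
The plan is to prove the two inclusions in \eqref{e:main} separately. Write $V_{ij}=\V(K_i,K_j)$ and, for a quadruple of planar convex bodies, set
$$F(K_1,K_2,K_3,K_4)=V_{13}V_{24}+V_{14}V_{23}-V_{12}V_{34},$$
together with its two analogues obtained by permuting the roles of the indices; the inclusion $\PMV(4,2)\subseteq\{\text{PT}\}$ is exactly the assertion that $F\ge 0$ for every quadruple. The starting point is the multilinearity observation highlighted before the theorem: in each of the three products every $K_i$ occurs in exactly one factor, so $F$ is Minkowski-additive and positively homogeneous in each argument separately. Since $\V$ is continuous in the Hausdorff metric, I would first approximate each $K_i$ by a convex polygon, and then use the classical fact that every planar convex polygon is a Minkowski sum of segments and triangles (the extreme rays of the cone of planar bodies modulo translation). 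Expanding all four arguments by multilinearity rewrites $F(K_1,\dots,K_4)$ as a \emph{nonnegative} combination of values $F(S_1,S_2,S_3,S_4)$ in which each $S_i$ is a segment or a triangle. Hence it suffices to prove $F\ge 0$ on such generators.

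For this base case I would evaluate the six mixed areas through the side-length formula \re{mix-sum-2}. When all four bodies are segments $I_i$ of length $a_i$ in direction $\theta_i$ one gets $V_{ij}=\tfrac12 a_ia_j|\sin(\theta_i-\theta_j)|$, and after cancelling the common factor $\tfrac14 a_1a_2a_3a_4$ the inequalities \re{PTE-4} reduce to
$$|\sin(\theta_1-\theta_2)||\sin(\theta_3-\theta_4)|\le |\sin(\theta_1-\theta_3)||\sin(\theta_2-\theta_4)|+|\sin(\theta_1-\theta_4)||\sin(\theta_2-\theta_3)|$$
and its two permutations. Placing the points $P_i=(\cos 2\theta_i,\sin 2\theta_i)$ on the unit circle identifies $|\sin(\theta_i-\theta_j)|$ with half the chord length $|P_iP_j|$, so this is precisely Ptolemy's inequality for the four concyclic points $P_1,\dots,P_4$; in fact it holds with equality for exactly one pairing, which is what confines segment configurations to the boundary of the region. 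The generator cases in which at least one body is a genuine triangle (hence not centrally symmetric) are the main obstacle of this direction: they are not captured by the segment/Ptolemy picture, and I expect to settle them by the same explicit evaluation of \re{mix-sum-2} followed by a finite but careful verification of \re{PTE-4}, organized using the invariance of the Plücker-type inequalities under permutations of $\{1,2,3,4\}$.

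For the reverse inclusion I would exploit the scaling action: replacing $K_i$ by $t_iK_i$ with $t_i>0$ multiplies $V_{ij}$ by $t_it_j$, so each of the three products $V_{12}V_{34},\,V_{13}V_{24},\,V_{14}V_{23}$ is multiplied by the \emph{same} factor $t_1t_2t_3t_4$. Thus on the open orthant the ratios of these three products, together with one overall scalar, are the only invariants of the four-parameter scaling action, and a point of $\R_{>0}^{6}$ satisfies \re{PTE} exactly when the triple $(V_{12}V_{34},V_{13}V_{24},V_{14}V_{23})$ obeys the triangle inequalities. Consequently it is enough to realize, for every admissible triple of products, a single quadruple with those products; the scalings then sweep out its whole orbit, and the degenerate points with some $V_{ij}=0$ are handled by a limiting argument in the spirit of \rex{PMV-3-2}. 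The boundary triples (one triangle inequality tight) are realized by four segments, i.e.\ four concyclic points via Ptolemy equality, while the interior triples require genuinely two-dimensional bodies; these I would produce by an explicit family (such as three segments together with one triangle, realizing the required product ratios), proving surjectivity onto the interior by a dimension or degree count. Finally, closedness of $\PMV(4,2)$ combined with the already-established necessity inclusion upgrades this to the stated equality.

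The two hardest points are therefore the triangle subcases of the base case on the necessity side and the explicit realization of interior product-ratios on the sufficiency side; both originate from the same phenomenon, namely that non-centrally-symmetric bodies are needed to move off the Ptolemy-equality boundary that segments alone cut out.
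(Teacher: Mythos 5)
Your reduction of the necessity inclusion to generators is the same as the paper's (approximate by polygons, decompose into segments and triangles, use multilinearity of $\pm V_{12}V_{34}\pm V_{13}V_{24}\pm V_{14}V_{23}$), and your Ptolemy argument for four segments is a pleasant equivalent of the Pl\"ucker-relation computation in \rp{symmetric}. The genuine gap is exactly where you flag it: the generator cases involving triangles. This is not a finite check --- quadruples of triangles form a continuous (16-parameter) family on which the function $F$ is a nonconstant polynomial, so ``explicit evaluation of \re{mix-sum-2} followed by a finite but careful verification'' is not an available move, and no argument is offered in its place. This case is the bulk of the paper's proof: it needs the special cases \rp{homothetic} and \rp{segment}, the linear deformation technique of \rl{lin-def} and \rl{half-strip} to push a general quadruple of triangles (without increasing $F$) to a configuration in which every outer normal is shared by at least two triangles, and then a combinatorial classification of the resulting normal configurations, each finally handled by \rl{share-vertex}. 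Nothing in your proposal substitutes for this step.

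For the reverse inclusion, your observation that rescaling multiplies all three products by the same factor $t_1t_2t_3t_4$, so that on the open orthant it suffices to realize each admissible ratio triple of products, is correct and parallels the paper's use of the $\G_4$-action. But two steps remain unproved. First, surjectivity onto the interior triples is asserted via ``a dimension or degree count''; a dimension count gives only full-dimensionality of the image, not that it covers the whole triangle-inequality region. The paper instead exhibits an explicit inverse: the three unit-mixed-area segments $I_1,I_2,I_3$ together with the zonotope $K_1=\tfrac12(s-t+1)I_1+\tfrac12(-s+t+1)I_2+\tfrac12(s+t-1)I_3$ realize the product triple $(t,s,1)$, and nonnegativity of the three coefficients is precisely \re{PTE-4}. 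Second, your handling of points with some $v_{ij}=0$ is circular: closedness of $\PMV(4,2)$ is not known a priori (it is a consequence of the theorem; the paper explicitly states it does not even know closedness for $n\ge 5$), and you would in addition have to prove that every boundary point of $\Pl_4$ with zero coordinates is a limit of strictly positive points of $\Pl_4$, which is itself a case analysis over the possible zero patterns. The paper sidesteps both issues by classifying those patterns combinatorially ($G_v$ is complete multipartite plus isolated nodes, \rp{no:isolated:points:in:G}), reducing to the eight orbit types of \rf{P_4}, and realizing each one directly by an explicit tuple in Table~\ref{table-P4}.
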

In particular, this shows that $\PMV(4,2)$ is a  basic closed semialgebraic subset of $\R^6$ (see \rs{PMV-n-2} for the definition of basic semialgebraic sets).


Here are the main steps of the proof of \rt{PMV-4-2}.
Recall that planar compact convex sets can be approximated by convex polygons in Hausdorff metric (see \cite[Th 1.8.16]{Schneider2014})
and the mixed volume functional is continuous (see the proof of Theorem 5.1.7 in \cite{Schneider2014}).
Thus, to prove the direct inclusion in \re{main} it is enough to restrict ourselves to the case of convex polygons $K_1,\dots, K_4$. Furthermore,
each convex polygon in the plane is the Minkowski sum of segments and triangles (see, for example, \cite[Th 3.2.14]{Schneider2014}). Since the Pl\"ucker-type inequalities are linear in each $K_i$ the problem further reduces to the case when each $K_i$ is either a segment or a triangle. We deal with this situation in \rs{PTE}.
To prove the reverse inclusion in \re{main} we first put a combinatorial structure on the space
defined by the Pl\"ucker-type inequalities. This allows us to handle the case distinction when some of the $v_{ij}$ become zero, similarly to 
what we did in \rex{PMV-3-2}. We do this in \rs{graphs}.

Our second main result gives an insight about the pure mixed volume configuration space for more than four planar convex sets.
Using the same arguments as in \rs{PTE}, 
we see that $\mathcal{PMV}(n,2)$ is contained 
in the semialgebraic subset of $\R^{n \choose2}$  described by the $3{n\choose 4}$ Pl\"ucker-type inequalities. 
However, for $n\geq 8$ more inequalities are needed to describe $\mathcal{PMV}(n,2)$ completely.
Whether the Pl\"ucker-type inequalities describe $\mathcal{PMV}(n,2)$ for $n=5,6,7$ remains open. We have the following theorem.

 \begin{thm}\label{T:PMV-n-2} 
For $n\geq 4$, we have
$$\PMV(n,2)\subseteq \left\{v\in\R_{\geq 0}^{n\choose 2} \,:\, v_{ij}v_{kl}\leq v_{ik}v_{jl}+v_{il}v_{jk}\ \text{\rm for }\{i,j\}\sqcup\{k,l\}\subseteq[n] \right\}.$$
Moreover, this containment is proper for $n\geq 8$.
\end{thm}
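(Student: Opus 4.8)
The statement has two parts: the containment, valid for all $n\ge4$, and its properness for $n\ge8$. I would treat them separately.

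For the containment I would reduce to four sets. The key observation is that each Plücker-type inequality refers to only four of the $n$ bodies. Fix $K=(K_1,\dots,K_n)\in\Km_2^{\,n}$ and a partition $\{i,j\}\sqcup\{k,l\}\subseteq[n]$ into four distinct indices. The inequality $v_{ij}v_{kl}\le v_{ik}v_{jl}+v_{il}v_{jk}$ involves only the mixed areas among $K_i,K_j,K_k,K_l$, i.e.\ the coordinates of $W_{(K_i,K_j,K_k,K_l)}\in\PMV(4,2)$. By the direct inclusion in \rt{PMV-4-2}, this vector satisfies all three Plücker-type inequalities on $[4]$, in particular the one at hand; since the partition is arbitrary, $W_K$ lies in the asserted set. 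Equivalently, I could re-run the reduction of \rs{PTE} verbatim: approximate the four bodies by polygons, decompose each into a Minkowski sum of segments and triangles, and use that every combination $\pm v_{ij}v_{kl}\pm v_{ik}v_{jl}\pm v_{il}v_{jk}$ is multilinear in the four sets to reduce to the finitely many segment/triangle cases checked there.

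For properness I would first reduce to a single witness at $n=8$ by padding. Suppose $v^\ast\in\R_{\ge0}^{\binom{8}{2}}$ satisfies every Plücker-type inequality but $v^\ast\notin\PMV(8,2)$. For $n>8$ set $\tilde v_{ij}=v^\ast_{ij}$ when $i,j\le8$ and $\tilde v_{ij}=0$ otherwise. Any Plücker-type inequality for $\tilde v$ whose four indices lie in $[8]$ holds by hypothesis; any inequality involving an index larger than $8$ has a vanishing factor on both sides and holds trivially. Moreover, if $\tilde v=W_K$ for some $K\in\Km_2^{\,n}$, then restricting to the first eight bodies gives $W_{(K_1,\dots,K_8)}=v^\ast\in\PMV(8,2)$, a contradiction; hence $\tilde v\notin\PMV(n,2)$, and the containment is proper for every $n\ge8$ once it is proper for $n=8$.

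It remains to build the witness $v^\ast$, and this is the heart of the matter. The plan is to separate $\PMV(8,2)$ from the Plücker region by one \emph{additional} valid inequality $L(v)\ge0$ that holds on all of $\PMV(8,2)$ yet is not a consequence of the Plücker-type inequalities, together with a point $v^\ast$ in the Plücker region with $L(v^\ast)<0$. To locate $L$ I would use the representation of mixed area as a fixed symmetric pairing on surface area measures: writing $\mu_i$ for the surface area measure of $K_i$ on $\Ss^1$, one has $\V(K_i,K_j)=B(\mu_i,\mu_j)$ for a fixed symmetric kernel $B$, which on segments specializes (by \rl{width}) to $\V(I_i,I_j)=\tfrac12\lvert\det(p_i,p_j)\rvert$, the absolute value of a $2\times2$ minor. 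Thus $\PMV(n,2)$ is an off-diagonal Gram-type locus of points of a convex cone of measures; the Plücker-type inequalities capture its four-point conditions, while genuinely new constraints couple more than four sets and are invisible to any single four-element subset. The main obstacle is twofold. First, proving that a candidate $L$ is valid on \emph{all} of $\PMV(8,2)$ requires controlling arbitrary convex-body realizations, not just segment ones, for which the segment/triangle decomposition and the pairing $B$ are the natural tools. Second, one must certify that the chosen $v^\ast$ truly lies outside $\PMV(8,2)$ and that $L$ is independent of the Plücker-type inequalities. I expect the search for $L$ and $v^\ast$ to be carried out by computer (as in the authors' SageMath computations), with the final step reduced to a human-verifiable separating certificate.
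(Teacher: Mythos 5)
Your containment argument is correct and is exactly the paper's: each Pl\"ucker-type inequality involves only the four bodies $K_i,K_j,K_k,K_l$, so the inclusion follows from the $n=4$ case of \rt{PMV-4-2} (equivalently, from rerunning the reduction of \rs{PTE}). Your padding reduction from $n>8$ to $n=8$ is also sound; it is the embedding \re{embedding}, and the paper phrases the same step via the coordinate projection $\pi:\R^{\binom{n}{2}}\to\R^{\binom{8}{2}}$, using $\pi(\PMV(n,2))=\PMV(8,2)$ and $\pi(\Pl_n)=\Pl_8$.

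The genuine gap is the witness at $n=8$, which you correctly identify as the heart of the matter but do not supply. Your plan --- find an extra inequality $L\ge 0$ valid on all of $\PMV(8,2)$, certify its validity for arbitrary convex bodies, certify its independence from the Pl\"ucker-type inequalities, and locate $v^\ast$ with $L(v^\ast)<0$, all via an unspecified computer search --- produces none of these objects, and proving a new inequality valid on all of $\PMV(8,2)$ is exactly the kind of statement the rest of the paper shows to be difficult. The paper sidesteps this entirely with a dimension count on a linear slice: let $X=\{v\in\R^{\binom{8}{2}}: v_{i,i+4}=0,\ v_{i,l}=v_{i+4,l}\ \text{for } i\in[4]\}$. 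On $X$ the defining inequalities of $\Pl_8$ reduce to those of $\Pl_4$, so $\dim(X\cap\Pl_8)=6$; on the other hand $\V(K_i,K_{i+4})=0$ forces $K_i$ and $K_{i+4}$ to be parallel segments (or points) and the equalities $\V(K_i,K_l)=\V(K_{i+4},K_l)$ force equal lengths, so $X\cap\PMV(8,2)$ is the set of configuration vectors of four segments, which satisfy the exact Pl\"ucker relation \re{PR} and hence form a set of dimension $5<6$. No separating polynomial is ever exhibited. If you want to complete your outline along these lines, the slice hands you an explicit witness: the point of $X$ whose six coordinates indexed by pairs in $[4]$ all equal $1$ lies in $\Pl_8$ but cannot arise from segments, since no signed combination $\pm v_{12}v_{34}\pm v_{13}v_{24}\pm v_{14}v_{23}$ with one minus sign vanishes when all $v_{ij}=1$.
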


We prove this result in \rs{PMV-n-2}. In \rs{dim} we show that  both $\mathcal{MV}(n,2)$ and $\mathcal{PMV}(n,2)$ have full topological dimension.
Finally, we discuss applications of our results to intersection numbers of algebraic curves in $(\C\setminus\{0\})^2$ and
 tropical curves  in \rs{applications}.

\section{Pl\"ucker-type inequalities for four planar convex sets}\label{S:PTE}

In this section we prove that for any $K=(K_1,\dots, K_4)\in{\Km_2}^{\!4}$, the corresponding pure mixed area configuration
vector $$W_K=(V_{12}, V_{13}, V_{14}, V_{23}, V_{24},V_{34}),\quad \text{where }V_{ij}=\V(K_i,K_j),$$
satisfies the three Pl\"ucker-type inequalities \re{PTE-4}.
We start with the case when $K_1,\dots, K_4$ are centrally-symmetric. Although we will not need it to prove the general case, we include the proof
as it is short, transparent, and explains why we call the inequalities Pl\"ucker-type.

\subsection{The symmetric case} Let $\Km_2^{sym}$ denote the set of compact convex centrally-symmetric sets in the plane.
\begin{prop}\label{P:symmetric}
Let $K=(K_1,\dots, K_4)\in(\Km_2^{sym})^4$. Then the pure mixed area configuration vector
$W_K=(V_{12},V_{13},V_{14},V_{23},V_{24},V_{34})$ satisfies the Pl\"ucker-type inequalities \re{PTE-4}.
\end{prop}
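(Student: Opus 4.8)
The plan is to exploit the fact that in the plane every centrally-symmetric convex body is a zonoid, so that each mixed area $V_{ij}$ becomes a superposition of mixed areas of segments, and to recognize the latter as (half the absolute value of) a $2\times 2$ determinant, i.e.\ a Pl\"ucker coordinate. First I would record the mixed area of two segments: if $I=[0,a]$ and $J=[0,b]$ with $a,b\in\R^2$, then applying \rl{width} with the unit vector $u=a^\perp/|a|$ (where $a^\perp=(-a_2,a_1)$) gives
\[
\V(I,J)=\tfrac12\,\bigl|\det(a,b)\bigr|,\qquad \det(a,b)=a_1b_2-a_2b_1 .
\]
Next I would reduce to zonotopes. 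Every centrally-symmetric convex polygon is a Minkowski sum of segments, and every $K\in\Km_2^{sym}$ is a Hausdorff limit of such zonotopes; since the mixed area is continuous and Minkowski-multilinear, it suffices to prove \re{PTE-4} when each $K_i=\sum_{k}[0,a_{i,k}]$ is a zonotope.

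For zonotopes, multilinearity together with the segment formula yields
\[
V_{ij}=\V(K_i,K_j)=\tfrac12\sum_{k,l}\bigl|\det(a_{i,k},a_{j,l})\bigr| .
\]
The heart of the argument is then a \emph{pointwise} Grassmann--Pl\"ucker triangle inequality: for any four vectors $a,b,c,d\in\R^2$,
\[
\bigl|\det(a,b)\bigr|\,\bigl|\det(c,d)\bigr|\le\bigl|\det(a,c)\bigr|\,\bigl|\det(b,d)\bigr|+\bigl|\det(a,d)\bigr|\,\bigl|\det(b,c)\bigr| .
\]
This follows from the three-term Pl\"ucker relation for the $2\times4$ matrix with columns $a,b,c,d$, namely
\[
\det(a,b)\det(c,d)-\det(a,c)\det(b,d)+\det(a,d)\det(b,c)=0,
\]
by transposing one product to the other side and applying the ordinary triangle inequality for real numbers. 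This identity is exactly the classical Pl\"ucker relation on the Grassmannian $\mathrm{Gr}(2,4)$, which is the source of the name ``Pl\"ucker-type.''

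Finally I would expand
\[
4\,V_{12}V_{34}=\sum_{k,l,m,p}\bigl|\det(a_{1,k},a_{2,l})\bigr|\,\bigl|\det(a_{3,m},a_{4,p})\bigr| ,
\]
apply the pointwise inequality to each quadruple $(a_{1,k},a_{2,l},a_{3,m},a_{4,p})$, and resum to obtain $V_{12}V_{34}\le V_{13}V_{24}+V_{14}V_{23}$. The remaining two inequalities of \re{PTE-4} follow by relabeling the indices $\{1,2,3,4\}$, which only permutes the roles of the three products. There is no deep obstacle here; the two points requiring care are the reduction to zonotopes (invoking that planar origin-symmetric bodies are zonoids, or density of zonotopes among them, together with continuity of the mixed area) and the sign bookkeeping needed to confirm the alternating three-term Pl\"ucker identity, so that the triangle inequality can be applied cleanly termwise.
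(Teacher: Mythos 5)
Your proof is correct and follows essentially the same route as the paper: reduce to zonotopes by density and multilinearity, compute mixed areas of segments as $\tfrac12|\det(\cdot,\cdot)|$, and invoke the three-term Pl\"ucker relation for $2\times 4$ minors. The only cosmetic difference is that the paper first reorders the four direction vectors counterclockwise so all determinants are nonnegative (whence the middle inequality holds with equality for segments), while you keep absolute values and apply the triangle inequality termwise; both are valid.
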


\begin{pf} First suppose that $K_1,\dots, K_4$ are segments with direction vectors $v_1,v_2,v_3, v_4$. 
Since the Pl\"ucker-type inequalities are invariant under the permutations of the $K_i$, we may assume that
$v_1,\dots, v_4$ are ordered counterclockwise in such a way that $\det(v_i,v_j)\geq 0$ for 
$1\leq i<j\leq 4$. Note that $\V(K_i,K_j)=\frac{1}{2}\det(v_i,v_j)$ in this case (see \re{polarization-2}). Consider the $2\times 4$ matrix with columns $v_1,\dots, v_4$. Its maximal minors
$v_{ij}=\det(v_i,v_j)$ satisfy the Pl\"ucker relation (see \cite[Ch 14]{MillerSturmfels}):
\begin{equation}\label{e:PR}
v_{12}v_{34}-v_{13}v_{24}+v_{14}v_{23}=0.
\end{equation}
Thus, the middle inequality in \re{PTE-4} holds with equality. It remains to check the other two inequalities in \re{PTE-4}.
Indeed, by \re{PR} we have 
$$0\leq 2v_{12}v_{34}=2v_{12}v_{34}-\left(v_{12}v_{34}-v_{13}v_{24}+v_{14}v_{23}\right)=v_{12}v_{34}+v_{13}v_{24}-v_{14}v_{23}.$$
The other inequality is similar.

Now, if $K_1,\dots, K_4$ are centrally-symmetric convex polygons then each of them can be decomposed as the Minkowski sum of segments.
As the Pl\"ucker-type inequalities are linear in each $K_i$, the proof reduces to the case of segments considered above. Finally, 
the general case of centrally-symmetric sets follows by approximating them (in the Hausdorff metric) by convex polygons.
\end{pf}

\subsection{Special cases}
Now let $K_1,\dots, K_4$ be planar compact convex sets, not necessarily centrally-symmetric.
First we will look at a case when two of the $K_i$ are homothetic. 

\begin{prop}\label{P:homothetic}
Let $K=(K_1,\dots, K_4)\in{\Km_2\!}^4$ and assume that $K_3$  and $K_4$ are homothetic. Then the pure mixed area configuration vector
$W_K=(V_{12},V_{13},V_{14},V_{23},V_{24},V_{34})$ satisfies the Pl\"ucker-type inequalities.
\end{prop}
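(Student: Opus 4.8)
The plan is to use the homothety to collapse the three Pl\"ucker-type inequalities \re{PTE-4} to a single substantive inequality. Write $K_4=\lambda K_3+t$ with $\lambda\ge 0$ and $t\in\R^2$. If $\lambda=0$ then $K_4$ is a point, so $V_{14}=V_{24}=V_{34}=0$ and all three inequalities in \re{PTE-4} hold trivially; hence I may assume $\lambda>0$. Using translation invariance and multilinearity of the mixed area, $V_{14}=\lambda V_{13}$, $V_{24}=\lambda V_{23}$ and $V_{34}=\lambda\Vol_2(K_3)$. The crucial consequence is that $V_{13}V_{24}=V_{14}V_{23}=\lambda V_{13}V_{23}$, so the two inequalities of \re{PTE-4} carrying the combination $\pm(V_{13}V_{24}-V_{14}V_{23})$ both reduce to $V_{12}V_{34}\ge 0$, which holds because mixed areas are nonnegative.

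Thus the whole proposition reduces to the remaining, third inequality, which after dividing by $\lambda$ becomes the \emph{key inequality} $\V(K_1,K_2)\Vol_2(K_3)\le 2\,\V(K_1,K_3)\V(K_2,K_3)$ for arbitrary $K_1,K_2,K_3\in\Km_2$. My strategy is to fix $K_3$ and study the form $Q(K_1,K_2):=2\,\V(K_1,K_3)\V(K_2,K_3)-\Vol_2(K_3)\V(K_1,K_2)$, which is symmetric and Minkowski-bilinear in $(K_1,K_2)$. Consequently, if $K_1=\sum_p a_pG_p$ and $K_2=\sum_q b_qH_q$ with $a_p,b_q>0$ and each $G_p,H_q$ a segment or a triangle, then $Q(K_1,K_2)=\sum_{p,q}a_pb_q\,Q(G_p,H_q)$, so it suffices to prove $Q(G,H)\ge 0$ when $G,H$ are each a segment or a triangle; general bodies follow since every planar polygon is such a Minkowski sum and mixed areas are continuous. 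If moreover $\Vol_2(K_3)=0$ the key inequality is trivial, so I may assume $K_3$ is two-dimensional.

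The heart of the argument is the segment--segment case. If $G,H$ are segments orthogonal to unit vectors $\alpha,\beta$, then \rl{width} gives $\V(G,K_3)=\tfrac12 w_\alpha(K_3)|G|$ and $\V(H,K_3)=\tfrac12 w_\beta(K_3)|H|$, while $\V(G,H)=\tfrac12|\det(\alpha,\beta)|\,|G|\,|H|$. After cancelling $\tfrac12|G|\,|H|$, the inequality $Q(G,H)\ge 0$ becomes $|\det(\alpha,\beta)|\,\Vol_2(K_3)\le w_\alpha(K_3)\,w_\beta(K_3)$. This is exactly the statement that $K_3$ is contained in the parallelogram cut out by its two minimal bounding strips orthogonal to $\alpha$ and $\beta$, whose area equals $w_\alpha(K_3)\,w_\beta(K_3)/|\det(\alpha,\beta)|$; so this case follows from plane geometry alone.

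The main obstacle is the remaining generator cases, in which $G$ or $H$ is a triangle: triangles are Minkowski-indecomposable, so they cannot be reduced to segments and must be handled directly, and the width identity of \rl{width} no longer linearizes both factors. Here I would expand $2\,\V(G,K_3)$ via the edge formula \re{mix-sum-2} for the triangle and estimate it against $\Vol_2(K_3)\V(G,H)$ edge by edge using the parallelogram bound above. A cleaner, casework-free alternative that I would develop in parallel is to prove the key inequality as a single ``cross'' reverse Cauchy--Schwarz estimate: setting $f=K_1-\tfrac{\V(K_1,K_3)}{\Vol_2(K_3)}K_3$ and $g=K_2-\tfrac{\V(K_2,K_3)}{\Vol_2(K_3)}K_3$ in the vector space of differences of support functions, one has $\V(f,K_3)=\V(g,K_3)=0$; the index form of the Aleksandrov--Fenchel inequality \re{AF} makes the mixed-area form negative semidefinite on the $K_3$-orthogonal complement, giving $\V(f,g)^2\le \V(f,f)\V(g,g)$, and expanding this together with the Minkowski inequality \re{Mink} in the form $-\V(f,f)\le \V(K_1,K_3)^2/\Vol_2(K_3)$ yields the key inequality at once. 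The delicate point in this second route is justifying the semidefiniteness for formal differences rather than genuine bodies, for which I would invoke the standard extension of \re{AF} to the span of support functions.
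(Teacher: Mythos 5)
Your reduction is exactly the paper's: after normalizing to $K_3=K_4$, two of the inequalities in \re{PTE-4} collapse to $V_{12}V_{33}\ge 0$ and the only substantive one is the Bezout-type inequality $\V(K_1,K_2)\Vol_2(K_3)\le 2\,\V(K_1,K_3)\V(K_2,K_3)$. The difference is that the paper stops there and cites this inequality from \cite[Th 6.2]{SZ16}, whereas you set out to prove it. Of your two routes, the first (decompose $K_1,K_2$ into segments and triangles while keeping $K_3$ fixed) is genuinely incomplete as written: the segment--segment case is correct and nicely elementary (the bound $|\det(\alpha,\beta)|\,\Vol_2(K_3)\le w_\alpha(K_3)\,w_\beta(K_3)$ via the parallelogram of two bounding strips is right), but the triangle cases are only announced, and since triangles are Minkowski-indecomposable they are precisely the hard part; so this route by itself is not a proof. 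Your second route, however, does close the gap and is essentially the standard proof of the planar Bezout inequality: with $f=K_1-\tfrac{\V(K_1,K_3)}{\Vol_2(K_3)}K_3$ and $g$ defined analogously, the hyperbolicity of the mixed-area form (the signature reformulation of \re{AF} on the span of support functions) gives $\V(f,f)\le 0$, $\V(g,g)\le 0$ and $\V(f,g)^2\le\V(f,f)\V(g,g)$; since $-\V(f,f)=\V(K_1,K_3)^2/\Vol_2(K_3)-\Vol_2(K_1)\le \V(K_1,K_3)^2/\Vol_2(K_3)$ (this step uses $\Vol_2(K_1)\ge 0$ rather than \re{Mink}), one gets $\bigl|\V(K_1,K_2)-\V(K_1,K_3)\V(K_2,K_3)/\Vol_2(K_3)\bigr|\le \V(K_1,K_3)\V(K_2,K_3)/\Vol_2(K_3)$, which is the key inequality. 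The extension of \re{AF} to differences of support functions that you flag is standard, and for the one-sided facts $\V(f,f)\le 0$, $\V(g,g)\le 0$ you can even avoid it by applying \re{AF} to the genuine bodies $f+\mu K_3$ for large $\mu$. In short: same skeleton as the paper, with a self-contained proof of the cited ingredient; keep route (b) and either finish or drop route (a).
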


\begin{pf} By translating and rescaling  $K_4$ we may assume that $K_3=K_4$. Then the first two inequalities in \re{PTE-4}
are trivial and the third one states $2V_{13}V_{23}\geq V_{12}V_{33}$. This inequality is proved in \cite[Th 6.2]{SZ16}
and is an instance of a Bezout-type inequality for mixed volumes. See also
\cite[Lem 5.1]{BGL18} or \cite[Lem 4.1]{ABS20} for higher dimensional Bezout-type inequalities.
\end{pf}

Our next special case is when one of the $K_i$ is a segment.

\begin{prop}\label{P:segment}
Let $K=(K_1,\dots, K_4)\in{\Km_2\!}^4$ and assume that $K_4$ is a segment. Then the pure mixed area configuration vector
$W_K=(V_{12},V_{13},V_{14},V_{23},V_{24},V_{34})$ satisfies the Pl\"ucker-type inequalities.
\end{prop}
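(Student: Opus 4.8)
The plan is to reduce everything to a very explicit computation using the side-length/direction data of the segment $K_4$ and the formula for mixed area with a segment. Since $K_4$ is a segment, \rl{width} tells us that $V_{i4}=\V(K_i,K_4)=\tfrac12 w_u(K_i)\lambda$, where $u$ is a unit normal to $K_4$ and $\lambda=|K_4|$. The scaling factor $\tfrac12\lambda$ is common to $V_{14},V_{24},V_{34}$, so after clearing it the three Pl\"ucker-type inequalities in \re{PTE-4} become inequalities relating the three genuine mixed areas $V_{12},V_{13},V_{23}$ to the three widths $w_u(K_1),w_u(K_2),w_u(K_3)$ in the single direction $u$.

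Concretely, writing $w_i=w_u(K_i)$, the three inequalities take the form (up to the positive factor $\tfrac12\lambda$)
\begin{equation}\label{e:seg-reduced}
\begin{matrix}
\,\,\, V_{12}\,w_3+V_{13}\,w_2-V_{23}\,w_1\geq 0\\
\,\, \, V_{12}\,w_3-V_{13}\,w_2+V_{23}\,w_1\geq 0\\
-V_{12}\,w_3+V_{13}\,w_2+V_{23}\,w_1\geq 0.
\end{matrix}
\end{equation}
So I have reduced the claim to proving the three triangle-type inequalities \re{seg-reduced} among $V_{12}w_3$, $V_{13}w_2$, $V_{23}w_1$. The natural next move is to interpret the width $w_i=w_u(K_i)$ back as a mixed area: by \rl{width} again, $w_u(K_i)=\tfrac{2}{\lambda}\V(K_i,K_4)$, but more usefully $w_u(K_i)$ equals $\V(K_i,I_u)\cdot(\text{const})$ where $I_u$ is a unit segment normal to $u$. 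This suggests replacing $K_4$ by a \emph{unit} segment and reading \re{seg-reduced} as a statement purely about the four sets $K_1,K_2,K_3$ and a fixed segment direction.

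The cleanest route I expect is projection. Let $\pi$ denote orthogonal projection onto the line $\R u$ (the direction of the normal to $K_4$), so that $w_i=w_u(K_i)=|\pi(K_i)|$ is the length of the projected segment $\pi(K_i)\subset\R u$. The key structural fact I would try to exploit is a monotonicity/product relation of the shape $V_{ij}\le c\cdot w_i w_j$ is false in general, so instead I would look for a direct additive decomposition: using \re{mix-sum-2} I would expand each $V_{ij}=\V(K_i,K_j)$ by summing over the edges of $K_j$ with weights $h_{K_i}$, and try to match terms against the products $V_{ij}w_k$. I expect the three inequalities to follow from a single underlying identity plus nonnegativity, mirroring the symmetric case (\rp{symmetric}) where the Pl\"ucker relation \re{PR} held with equality and the two outer inequalities dropped out for free.

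The main obstacle will be the absence of central symmetry: unlike in \rp{symmetric}, I cannot decompose $K_1,K_2,K_3$ into segments and invoke the exact Pl\"ucker relation \re{PR}, so I should not expect any of \re{seg-reduced} to hold with equality. The hard part is therefore to find the right nonnegative quantity controlling the ``defect.'' My fallback plan, and probably the honest one, is to reduce once more by Minkowski-linearity: since all three inequalities \re{seg-reduced} are linear in each of $K_1,K_2,K_3$, and every planar convex body is a Hausdorff limit of sums of segments and triangles (as used in the proof sketch of \rt{PMV-4-2}), it suffices to verify \re{seg-reduced} when each of $K_1,K_2,K_3$ is itself a segment or a triangle. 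When all three are segments we recover the centrally-symmetric computation of \rp{symmetric}; the genuinely new cases are when some $K_i$ are triangles, and those reduce to a finite, explicit case check on the directions and side lengths, which is exactly the setting \rs{PTE} is set up to handle. I anticipate the decisive estimate to be an elementary inequality relating the mixed area of a triangle with another body to that body's width in the segment direction, and locating that inequality is where the real work lies.
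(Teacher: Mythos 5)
Your reduction of the three Pl\"ucker-type inequalities to the triangle inequalities among $V_{12}w_3$, $V_{13}w_2$, $V_{23}w_1$ (with $w_i=w_u(K_i)$ and $u$ the unit normal to $K_4$) via \rl{width} is correct and is essentially where the paper also starts. But the proposal stops exactly where the proof has to begin: you never establish $V_{12}w_3\leq V_{13}w_2+V_{23}w_1$, and you say yourself that locating the decisive estimate is ``where the real work lies.'' Worse, your fallback is circular: you propose to decompose $K_1,K_2,K_3$ into segments and triangles and then hand the triangle cases to the machinery of \rs{PTE}, but that machinery (\rl{share-vertex}, \rl{half-strip}, and ultimately \rp{triangles}) is built \emph{on top of} \rp{segment} --- it works by reducing triangle configurations to the case where one set is a segment. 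So it cannot be invoked here.

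The missing idea is a two-step monotone replacement that avoids any decomposition of $K_1,K_2,K_3$ altogether. After rescaling so that $K_4$ is a unit segment and $w_{u_4}(K_i)=2$, i.e.\ $V_{i4}=1$ for $i=1,2,3$, one only needs $V_{12}\leq V_{13}+V_{23}$ (up to permutation). First, replace $K_3$ by a segment contained in $K_3$ that realizes the width $w_{u_4}(K_3)$: by monotonicity of the mixed volume this can only decrease $V_{13}$ and $V_{23}$, while $V_{12}$ and the normalization $V_{34}=1$ are unchanged, so the inequality is only strengthened. Second, with $K_3$ now a segment orthogonal to some $u_3$, replace $K_1$ and $K_2$ by the circumscribed parallelograms having the same widths in the directions $u_3$ and $u_4$: by \rl{width} this preserves $V_{13}$, $V_{23}$, $V_{14}$, $V_{24}$, and by monotonicity it can only increase $V_{12}$. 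One is then in the centrally symmetric situation of \rp{symmetric}; indeed a direct computation (with $2a_i=w_{u_3}(K_i)$ and $\lambda=|K_3|$) gives $V_{i3}=a_i\lambda$ and $V_{12}=(a_1+a_2)\lambda$, so the reduced extremal case holds with equality. Without some such monotonicity argument your outline is a plan, not a proof.
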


\begin{pf} By rescaling the sets in the tuple $K$ we may assume that $K_4$ is a unit segment and the $K_i$ have width $w_{u_4}(K_i)=2$ where $u_4$ is the direction orthogonal to $K_4$. Then $V_{i4}=1$ for $1\leq i\leq 3$ (see \rl{width}) and, up to permutations of the $K_i$, it is enough to prove 
\begin{equation}\label{e:triangle}
V_{12}\leq V_{13}+V_{23}. 
\end{equation}
Note that we may assume that $K_3$ is a segment, as replacing $K_3$ with a subset (and keeping the width equal to 2) only strengthens the inequality, by the monotonicity of the mixed volume. Let $w_{u_3}(K_i)=2a_i$, for $i=1,2$, where $u_3$ is the direction orthogonal to $K_3$. Similarly, for $i=1,2$, we may replace
$K_i$ with the parallelogram containing $K_i$ and having the same widths as $K_i$ in the directions $u_3$ and $u_4$, as this does not change $V_{i3}$ (by \rl{width}) and may only increase $V_{12}$. Now we may apply \rp{symmetric} or simply note that, by \rl{width}, $V_{i3}=a_i\lambda$ and $V_{12}=(a_1+a_2)\lambda$ where 
$\lambda$ is the length of $K_3$. In particular, \re{triangle} holds with equality in this case.
\end{pf}

\subsection{The case of triangles}\label{S:triangles}
In this subsection we assume that each $K_i$  is a triangle, which is the main case we need to consider.
We start with a special case of relative positions of the outer unit normals of the triangles.

\begin{lem}\label{L:share-vertex} 
Let $u_1,\dots, u_4$ be unit vectors (not necessarily distinct) contained in a half-plane and
ordered counterclockwise. Let $K_1,\dots, K_4$ be triangles in $\R^2$.
Suppose $u_1, u_4$ are outer unit normals of $K_i$ and $u_2,u_3$ are outer unit normals of $K_j$, for $1\leq i\neq j\leq 4$. Then
$$V_{ij}V_{kl}\leq V_{ik}V_{jl}+V_{il}V_{jk},$$
where $\{i,j\}\sqcup\{k,l\}=[4]$.
\end{lem}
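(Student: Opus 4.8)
The plan is to compute all six mixed areas directly from the support-function formula \re{mix-sum-2} and to exploit the special position of the normals. I would first fix the convention that $u_1,\dots,u_4$ lie in the closed upper half-plane and are ordered so that $\det(u_a,u_b)\ge 0$ for $a<b$. Since $u_1$ and $u_4$ are outer normals of $K_i$, the two edges of $K_i$ carrying these normals meet in a vertex $p$ whose normal cone is the angular sector between $u_1$ and $u_4$; as $u_2,u_3$ lie in this sector, the support function of $K_i$ is linear there, namely $h_{K_i}(u_2)=\la p,u_2\ra$ and $h_{K_i}(u_3)=\la p,u_3\ra$. Writing $V_{ij}$ via the normals $u_2,u_3,w_j$ of $K_j$, where $w_j$ is the third (lower) normal with edge length $\ell$, and using the edge-closure relation $\ell_2 u_2+\ell_3 u_3+\ell\,w_j=0$, the two upper contributions collapse and I obtain the clean single-term expression $2V_{ij}=\ell\,\bigl(h_{K_i}(w_j)-\la p,w_j\ra\bigr)$. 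This is manifestly nonnegative and, crucially, isolates the dependence of $V_{ij}$ on $K_i$ into the scalar excess of $K_i$ over its top vertex $p$ in the single direction $w_j$. This collapse is the crux enabled by the half-plane ordering together with the nesting of the normals.

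Second, I would express the remaining five mixed areas in the same parametrization. The quantity $V_{ik}V_{jl}+V_{il}V_{jk}-V_{ij}V_{kl}$ is Minkowski-multilinear in each $K_m$ (every index occurs exactly once per monomial), which lets me normalize scales freely and, in any degenerate sub-position where one of the four sets becomes a segment, conclude immediately from \rp{segment} using the permutation symmetry of \re{PTE-4}. In the principal case I apply \re{mix-sum-2} and \rl{width} to write each of $V_{ik},V_{il},V_{jk},V_{jl},V_{kl}$ in terms of the edge lengths of the triangles and the determinants $\det(u_a,u_b)$, again using the vertex-linearization (now for $K_j$ on the sector between $u_2$ and $u_3$) wherever the normals of $K_k,K_l$ fall into the controlled sectors. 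The target inequality then becomes a purely algebraic inequality among these lengths and determinants. I expect it to follow from the Plücker relation \re{PR}, exactly as in \rp{symmetric}, combined with the triangle inequality encoded in \re{PTE-4} and the nonnegativity $h_K(w)-\la p,w\ra\ge 0$ of the vertex excess.

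The main obstacle I anticipate lies in the bookkeeping for the arbitrary triangles $K_k,K_l$: unlike $K_i$ and $K_j$, their normals are not constrained to the sector spanned by $u_1,\dots,u_4$, so the bilinear mixed-area pairing carries a sign pattern dictated by the angular interleaving of all the edges, and one must verify that these signs align to produce a genuine Plücker-type identity rather than an indefinite quadratic form. Keeping the degenerate sub-positions organized, and checking that the collapsed single-vertex formula for $V_{ij}$ is compatible with the width expressions for the other five mixed areas, is where the real work concentrates; once the six areas are written in a common parametrization, the inequality should reduce to \re{PR} together with a triangle inequality and close out.
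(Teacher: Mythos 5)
Your opening computation is correct and captures the right geometric fact: since $u_2,u_3$ lie in the normal cone of the vertex $p$ of $K_i$ shared by the edges with normals $u_1,u_4$, the support function of $K_i$ linearizes there, and the closure relation $\ell_2u_2+\ell_3u_3+\ell\,w_j=0$ collapses $2V_{ij}$ to $\ell\bigl(h_{K_i}(w_j)-\langle p,w_j\rangle\bigr)$. From that point on, however, the argument has a genuine gap. Your plan for the ``principal case'' is to write the other five mixed areas in a common parametrization and then assert that the resulting algebraic inequality ``should reduce to'' the Pl\"ucker relation \re{PR} plus a triangle inequality. That reduction is never carried out, and there is no reason to expect it to be available: \re{PR} is an identity for four \emph{segments} (maximal minors of a $2\times 4$ matrix of directions), whereas $K_k$ and $K_l$ are arbitrary triangles with three unconstrained normals each, so $V_{ik},V_{il},V_{jk},V_{jl},V_{kl}$ do not fit into a rank-two Pl\"ucker picture; likewise \rl{width} only applies when one argument is a segment, so it cannot be invoked for $V_{kl}$ or for the pairings of $K_k,K_l$ with the triangles. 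You yourself flag this bookkeeping as ``where the real work concentrates,'' and that work is exactly what is missing.

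The irony is that your collapsed formula already contains everything needed to finish, if used differently. It shows that $V_{ij}$ depends on $K_i$ only through the vertex $p$ and the single scalar $h_{K_i}(w_j)$. Hence you may replace $K_i$ by the segment $K_i'=[p,q]\subseteq K_i$, where $q$ attains $h_{K_i}(w_j)$: the same computation applies verbatim to $K_i'$ (since $K_i'\subseteq K_i$ and $p\in K_i'$ force $h_{K_i'}(u_2)=\langle p,u_2\rangle$ and $h_{K_i'}(u_3)=\langle p,u_3\rangle$), so $V(K_i',K_j)=V_{ij}$, while $V(K_i',K_k)\le V_{ik}$ and $V(K_i',K_l)\le V_{il}$ by monotonicity, and $V_{kl},V_{jk},V_{jl}$ are untouched. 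The target inequality for $(K_1,\dots,K_4)$ therefore follows from the same inequality with $K_i$ replaced by a segment, which is exactly \rp{segment}. This is the paper's proof, phrased there via \rl{inscribed}: after a dilation one inscribes $K_i$ in $K_j$ so that they share the vertex $p$, and replaces $K_i$ by a segment of $K_i$ inscribed in $K_j$. No analysis of the positions of the normals of $K_k,K_l$ is needed at all.
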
 

\begin{pf} After possible translations and dilations we may assume that $K_i$ is inscribed in $K_j$. By the assumption on the outer normals, $K_i$ and $K_j$
share a vertex. Then we can replace $K_i$ with a segment contained in $K_i$ and inscribed in $K_j$. In this case $V_{ik}$ and $V_{il}$ may only decrease and $V_{ij}$ does not change (see \rl{inscribed}). The lemma now follows from \rp{segment}.
\end{pf}

To handle the general case we use linear deformations of $K_1,\dots, K_4$ which reduce the problem to special configurations of the triangles.
Let $L$ be a triangle with vertices $A,B,C$ and outer normals $u_{AB}$, $u_{BC}$, and $u_{AC}$ to the corresponding sides of $L$.
We are interested in deformations that fix two of the normals, say $u_{AB}$ and $u_{AC}$, and change the third one, $u_{BC}$.
We achieve this by replacing the vertex $C$ with a vertex $C_t$ which depends on a parameter $t\in\R$ in such a way 
that $C_0=C$, $|{AC_t}|>|{AC}|$ for $t>0$, and $|{AC_t}|<|{AC}|$ for $t<0$, see \rf{deformation}. 
We also assume that the coordinates of $C_t$ are linear functions in $t$. We use $L(t)$ 
to denote such a deformation of $L$. Note that $L(t)$ is defined for all $t$ in an interval 
$[t_0,\infty)$, where $L(t_0)$ is a segment (when $B=C_{t_0}$) and $L(t)$ approaches
an infinite ``half-strip'', as $t\to\infty$.

\begin{figure}[h]
\begin{center}
\includegraphics[scale=.32]{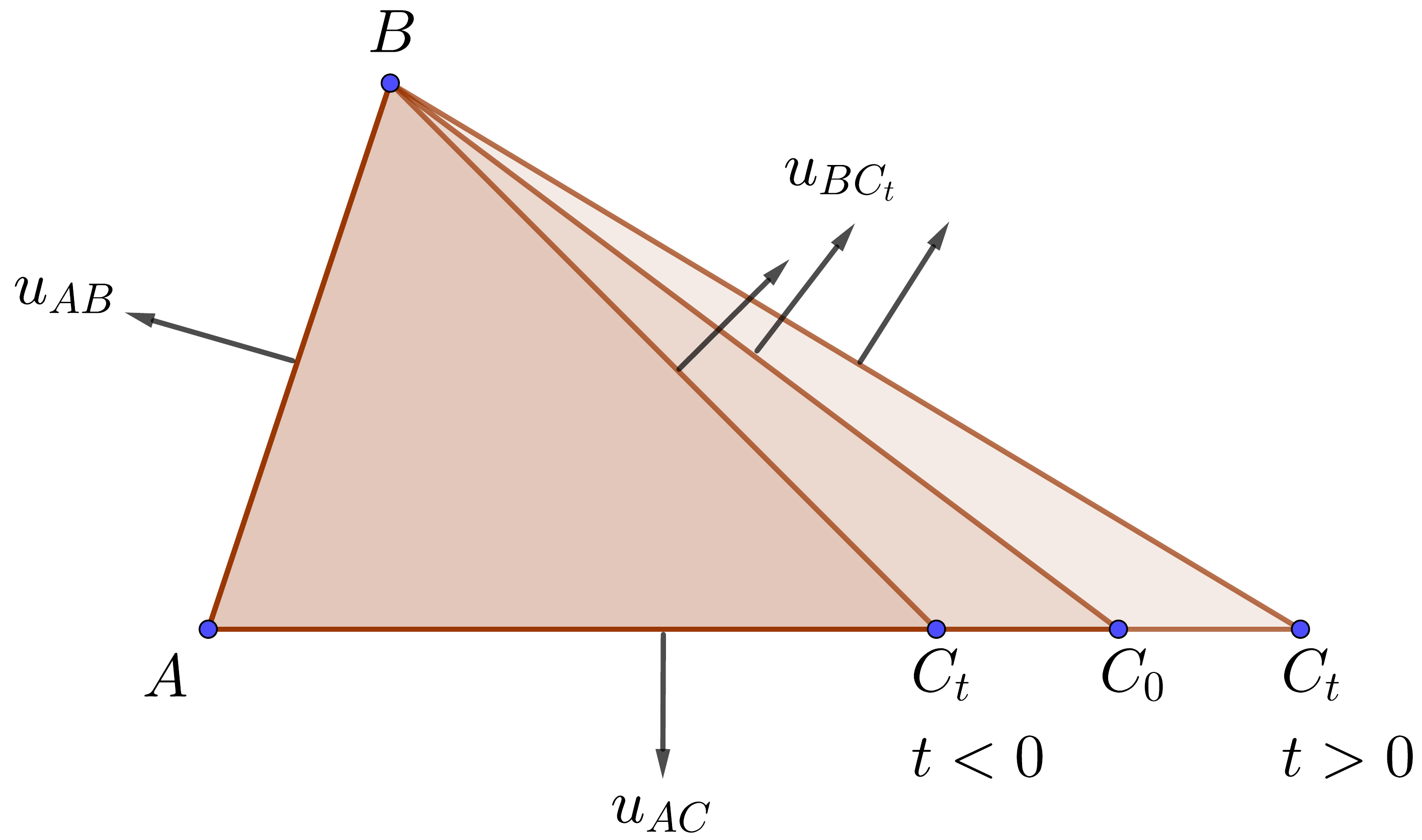}
\end{center}
\caption{A linear deformation $L(t)$ of a triangle $L=\triangle ABC$}
\label{F:deformation}
\end{figure}

\begin{lem}\label{L:lin-def} Let $L,M$ be triangles in $\R^2$ and 
let $L(t)$, $t\in[t_0,\infty)$ be a linear deformation of $L$, as above. 
Let $v_1,v_2$ be outer unit normals of $M$ and
assume that for all $t$ in a subset $T\subseteq[t_0,\infty)$
the normal $u_{BC_t}$ lies in the sector $[v_1,v_2]$. 
Then the mixed area $\V(L(t),M)$ is a linear function of $t$ on~$T$.
\end{lem}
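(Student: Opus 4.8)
The plan is to evaluate $\V(L(t),M)$ by the mixed-area formula \re{mix-sum-2}, but with the polygon taken to be $L(t)$ rather than $M$. By the symmetry of the mixed volume this gives
\[
\V(L(t),M)=\V(M,L(t))=\frac12\sum_{u\in U_{L(t)}}h_M(u)\,|L(t)^u|,
\]
where the sum runs over the (at most three) outer unit normals of the sides of the triangle $L(t)$. The advantage of this form over summing against the fixed polygon $M$ is that the only moving normal is $u_{BC_t}$: the two sides $AB$ and $AC_t$ of $L(t)$ carry the fixed normals $u_{AB}$ and $u_{AC}$, so the $t$-dependence is confined to a single, controllable term.

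I would then treat the three summands separately. For the side $AB$ both the normal $u_{AB}$ and the length $|AB|$ are independent of $t$, so $h_M(u_{AB})\,|AB|$ is constant. For the side $AC_t$ the normal is the fixed $u_{AC}$, hence $h_M(u_{AC})$ is constant, while $|AC_t|$ is an affine function of $t$ because $C_t=C+t\delta$ moves along the line through $A$ in direction $C-A$ with coordinates linear in $t$; thus this summand is affine. The crux is the summand from the moving side $BC_t$. Writing the edge vector $C_t-B=(C-B)+t\delta$, which is affine in $t$, and letting $\rho$ denote the rotation by $\pi/2$, the vector $u_{BC_t}\,|BC_t|=\pm\rho(C_t-B)$ (the sign being fixed by the orientation of $L(t)$, which is constant for $t\in T$ since $C_t$ stays on one ray from $A$) is again affine in $t$. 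It remains to linearize $h_M(u_{BC_t})$, and this is exactly where the sector hypothesis enters: the sector $[v_1,v_2]$ is the normal cone of $M$ at the vertex $P$ incident to the two sides of $M$ with outer normals $v_1,v_2$, and on that cone $h_M$ coincides with the linear map $\langle\,\cdot\,,P\rangle$. Since $u_{BC_t}\in[v_1,v_2]$ for $t\in T$, we obtain
\[
h_M(u_{BC_t})\,|BC_t|=\bigl\langle u_{BC_t}\,|BC_t|,\,P\bigr\rangle=\pm\bigl\langle \rho(C_t-B),\,P\bigr\rangle,
\]
which is affine in $t$. Adding the constant summand and the two affine summands shows that $\V(L(t),M)$ is affine, i.e.\ linear in the sense of the lemma, on $T$.

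The step I expect to require the most care is the linearization of $h_M(u_{BC_t})$, and it is precisely here that the sector hypothesis is indispensable. If a third outer normal of $M$ were to lie in the interior of $[v_1,v_2]$, then as $u_{BC_t}$ swept across it the maximizer of $\langle u_{BC_t},\cdot\rangle$ over $M$ would jump from one vertex of $M$ to another, so $h_M(u_{BC_t})$ — and hence $\V(L(t),M)$ — would only be piecewise linear on $T$; confining $u_{BC_t}$ to a single normal cone of $M$ removes this obstruction. I would also record that at the finitely many parameters where $L(t)$ degenerates to a segment the identity persists by continuity of the mixed volume (or directly via \rl{width}), so no separate argument is needed for those values.
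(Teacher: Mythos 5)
Your proof is correct and follows essentially the same route as the paper: apply \re{mix-sum-2} with $K=M$ and $P=L(t)$, observe that the $AB$-term is constant and the $AC_t$-term is affine, and use the sector hypothesis to replace $h_M(u_{BC_t})$ by the linear functional $\langle\,\cdot\,,x\rangle$ for the fixed maximizing vertex $x$ of $M$, so that $h_M(u_{BC_t})\,|BC_t|=\langle |BC_t|\,u_{BC_t},x\rangle$ is affine in $t$. The extra remarks on the rotation $\rho$, the failure mode when a third normal of $M$ lies in the sector, and the degenerate parameters are sound but not needed beyond what the paper records.
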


\begin{pf}
By \re{mix-sum-2} for $K=M$ and $P=L(t)$ we have
\begin{equation}\label{e:Vij}\nonumber
2\V(L(t),M)=h_{M}(u_{AB})|AB|+h_{M}(u_{BC_t})|BC_t|+h_{M}(u_{AC_t})|AC_t|.
\end{equation}
The first term does not depend on $t$. For the second term, note that 
when $u_{BC_t}$ lies in $[v_1,v_2]$ there is a unique vertex
$x$ of $M$, independent of $t$, where the value $h_{M}(u_{BC_t})$ is attained. 
Then 
$$h_{M}(u_{BC_t})|BC_t|=\langle |BC_t|u_{BC_t},x\rangle.$$ 
As the coordinates of $|BC_t|u_{BC_t}$ are linear in the coordinates of $C_t$, we see that
$h_{M}(u_{BC_t})|BC_t|$ is a linear function in $t$.
The situation with the third term is similar and even simpler since $u_{AC_t}$ is constant.
\end{pf}

Now $K_1,\dots, K_4$ be triangles in $\R^2$. The outer unit normals to the sides of the $K_i$ define a configuration
of 12 unit vectors. We enumerate them counterclockwise $u_0,\dots, u_{11}$, where the indices
are defined mod 12. Pick a triangle $K_i$ and its normal $u_\ell$. Consider a deformation
$K_i(t)$ of $K_i$  with ``moving'' normal $u_\ell(t)$, for $t\in[t_0,\infty)$, as above. We have the following lemma.

\begin{lem}\label{L:half-strip} 
Let $K_1,\dots, K_4$ be triangles in $\R^2$ and let $K_i(t)$ be a deformation of $K_i$ as above.
Assume  $u_\ell(t)\in[u_{\ell-1},u_{\ell+1}]$ for all $t\in [t_1,\infty)$ for some $t_1\geq t_0$.
Then, if the function $-V_{12}V_{34}+V_{13}V_{24}+V_{14}V_{23}$ is non-increasing as $t\to\infty$, then it is constant
on $[t_1,\infty)$.
\end{lem}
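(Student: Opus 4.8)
The plan is to prove that the function
$F(t):=-V_{12}V_{34}+V_{13}V_{24}+V_{14}V_{23}$,
whose six entries now depend on $t$ through $K_i(t)$, is affine-linear on $[t_1,\infty)$, to compute its slope, and to recognize that slope as a Plücker-type expression for a $4$-tuple in which $K_i$ is replaced by a segment. By \rp{segment} that expression is non-negative, so the slope of $F$ is $\ge 0$; since $F$ is non-increasing its slope is $\le 0$, hence $0$, and an affine-linear function of zero slope is constant.

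\emph{Affine-linearity.} Among the six mixed areas only the three $V_{ik}$ involving the deformed set $K_i$ vary with $t$, while the three $V_{kl}$ with $k,l\ne i$ are constant. In each of the three products composing $F$ the index $i$ appears in exactly one pair, so each product is a $t$-dependent factor times a constant; it thus suffices to see that each $V_{ik}(t)$ is affine-linear on $[t_1,\infty)$. This is \rl{lin-def}: by hypothesis $u_\ell(t)$ stays in $[u_{\ell-1},u_{\ell+1}]$, and the only one of the twelve normals strictly inside this sector is $u_\ell$ itself, a normal of $K_i$ and not of $K_k$; hence $[u_{\ell-1},u_{\ell+1}]$ lies in a single chamber of the normal fan of $K_k$, so $u_\ell(t)$ remains between two consecutive outer normals of $K_k$ for all $t\in[t_1,\infty)$. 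Therefore each $V_{ik}(t)$, and with it $F(t)$, is affine-linear there.

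\emph{The slope.} As $t\to\infty$ the triangle $K_i(t)$ degenerates to a half-strip whose two unbounded sides are parallel with opposite outer normals $\pm u$ and whose lengths grow at a common linear rate $c$. Expanding $2V_{ik}(t)=\sum_v h_{K_k}(v)\,|K_i(t)^v|$ as in \re{mix-sum-2}, only these two sides contribute to leading order, so
\[
\lim_{t\to\infty}\frac{V_{ik}(t)}{t}=\frac{c}{2}\bigl(h_{K_k}(u)+h_{K_k}(-u)\bigr)=\frac{c}{2}\,w_u(K_k)=\V(S,K_k),
\]
where $S$ is the segment orthogonal to $u$ of length $c$ and the last step is \rl{width}. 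The key point is that $S$ is the same for every $k$. Since $F$ is affine-linear, its slope is $\lim_{t\to\infty}F(t)/t$, which by the above is precisely $F$ with $K_i$ replaced by the segment $S$, i.e.\ the Plücker-type combination $-\V(S,K_2)\V(K_3,K_4)+\cdots$ of the tuple obtained from $K$ by substituting $S$ for $K_i$.

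\emph{Conclusion and main difficulty.} Because $S$ is a segment, \rp{segment} gives that the substituted tuple satisfies all three inequalities \re{PTE-4}; in particular the combination equal to the slope of $F$ is $\ge 0$. Being non-increasing, $F$ has slope $\le 0$, so the slope vanishes and $F$ is constant on $[t_1,\infty)$. I expect the slope computation to be the delicate step: one must justify the half-strip degeneration together with the side-length asymptotics (or argue directly that the leading coefficient of the affine-linear $V_{ik}(t)$ is $\tfrac{c}{2}w_u(K_k)$), and verify that a single segment $S$ works simultaneously for all $k$, which is exactly what allows \rp{segment} to be invoked.
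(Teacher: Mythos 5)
Your proof is correct and takes essentially the same route as the paper: affine-linearity of $F$ on $[t_1,\infty)$ via \rl{lin-def}, identification of the slope with $\lim_{t\to\infty}F(t)/t$, recognition of that limit as the Pl\"ucker combination for the tuple with $K_i$ replaced by a segment, and then \rp{segment} to force the slope to be $\geq 0$ and hence zero. The only (cosmetic) difference is that you compute the limiting segment $S$ and the limits $\V(S,K_k)$ explicitly from \re{mix-sum-2} and \rl{width}, whereas the paper gets the same conclusion by observing that $\tfrac{1}{t}K_i(t)$ converges to a segment in the Hausdorff metric and using continuity of the mixed area.
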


\begin{pf} Let $F(t)=-V_{12}V_{34}+V_{13}V_{24}+V_{14}V_{23}$.
The only terms in $F(t)$ that depend on $t$ are the mixed areas $V_{ij}$ for $1\leq j\leq 4$, $j\neq i$.
By \rl{lin-def}, $F(t)$ is linear on $[t_1,\infty)$ and, hence,
we can write $F(t)=\lambda_1 t+\lambda_0$. Moreover, if $F(t)$
is non-increasing as $t\to\infty$, then $\lambda_1\leq 0$.
 If we rescale $K_i(t)$ by a factor $1/t$ we obtain the function $F(t)/t$. 
As $t\to\infty$, the triangle $\frac{1}{t}K_i(t)$ degenerates to a segment. Thus, $\lim_{t\to\infty}\frac{F(t)}{t}\geq 0$, by \rp{segment}. Therefore $\lambda_1\geq 0$, which shows that $F(t)$ is a constant function on $[t_1,\infty)$.
\end{pf}

We are now ready to prove the main result of this section.

\begin{prop}\label{P:triangles}
Let $K=(K_1,\dots, K_4)\in{\Km_2}^{\!4}$ be a tuple of triangles. Then the pure mixed area configuration vector
$W_K=(V_{12},V_{13},V_{14},V_{23},V_{24},V_{34})$ satisfied the Pl\"ucker-type inequalities.
\end{prop}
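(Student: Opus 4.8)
The plan is to reduce the general case of four arbitrary triangles to the already-established special cases (Proposition \ref{P:segment} and Lemma \ref{L:share-vertex}) by using the linear deformations from Lemma \ref{L:half-strip} as a continuous ``degeneration tool.'' The key insight is that Lemma \ref{L:half-strip} gives us a monotonicity-rigidity dichotomy: whenever we push a deformation parameter $t\to\infty$ and the Plücker expression $F(t)=-V_{12}V_{34}+V_{13}V_{24}+V_{14}V_{23}$ happens to be non-increasing, it is in fact \emph{constant}. Since by symmetry all three Plücker-type inequalities \eqref{e:PTE-4} are on equal footing, it suffices to verify $F(t)\geq 0$ at the start of a suitable deformation chain, provided we can connect an arbitrary configuration to a known ``base case'' through deformations along which $F$ only decreases.

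First I would set up the combinatorial bookkeeping: the twelve outer unit normals $u_0,\dots,u_{11}$ of the four triangles, enumerated counterclockwise. The crucial observation is that the \emph{interesting} configurations are those in which the normals of different triangles are ``interleaved''; when the normals of two triangles separate cleanly (so that two consecutive normals $u_{\ell-1},u_{\ell+1}$ bounding a moving normal belong to one triangle while the moving normal belongs to another), we are precisely in the hypothesis of Lemma \ref{L:share-vertex} or can reach Proposition \ref{P:segment}. The strategy is then: pick a triangle $K_i$ and one of its normals $u_\ell$, and deform $K_i(t)$ so that $u_\ell(t)$ sweeps toward a neighboring normal $u_{\ell\pm1}$. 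By Lemma \ref{L:lin-def} the relevant mixed areas are piecewise linear in $t$, so $F(t)$ is piecewise linear; I would track $F$ as $u_\ell(t)$ crosses the sectors between consecutive normals.

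The main engine is a descent argument on the combinatorial complexity of the normal configuration. At each step I would choose a normal to move in the direction that makes $F(t)$ non-increasing (if $F$ were strictly increasing in \emph{both} directions of motion we would be stuck, so I must argue this cannot happen—here the linearity from Lemma \ref{L:lin-def} is essential, since a linear function is monotone and we can always pick the non-increasing direction). By Lemma \ref{L:half-strip}, along any maximal sub-interval on which the moving normal stays in $[u_{\ell-1},u_{\ell+1}]$ and $F$ is non-increasing, $F$ is actually constant. Iterating, I either collapse a triangle to a segment (reducing to Proposition \ref{P:segment}) or merge normals so that two triangles share a vertex (reducing to Lemma \ref{L:share-vertex}), while keeping $F$ constant throughout. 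Since $F$ is constant along the whole chain and non-negative at the terminal base case, it is non-negative at the original configuration.

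\textbf{The hard part} will be proving that the deformation process actually terminates in a base case \emph{while keeping $F$ non-increasing}, and in particular that at every stage there is a legal non-increasing move that genuinely simplifies the configuration. The subtlety is that decreasing $F$ by moving one normal might be incompatible with simplifying the combinatorial type, and one must rule out cycling. I expect this requires a careful invariant—such as the number of ``alternations'' among the four triangles' normals around the circle, or the total number of distinct normal directions—that strictly decreases at each reduction step while the sign of $F$ is controlled by Lemma \ref{L:half-strip}. Verifying that every configuration of twelve counterclockwise normals admits such a move, possibly after relabeling via the permutation-invariance of \eqref{e:PTE-4}, is where the real case analysis lives; the rest is the machinery already assembled in Lemmas \ref{L:share-vertex}--\ref{L:half-strip}.
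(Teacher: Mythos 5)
Your overall strategy---linear deformations controlled by Lemma~\ref{L:lin-def}, the half-strip dichotomy of Lemma~\ref{L:half-strip} to guarantee that a non-increasing direction is always available, and termination in the base cases of Proposition~\ref{P:segment} and Lemma~\ref{L:share-vertex}---is exactly the paper's. Termination is actually easier than you fear: each completed deformation step either collapses a triangle to a segment or makes the moving normal $u_\ell(t)$ coincide with a neighbouring normal, so the number of distinct directions among the twelve normals strictly decreases and no cycling can occur. One small correction: Lemma~\ref{L:half-strip} does \emph{not} say that $F$ is constant on every bounded sub-interval on which it is non-increasing; its conclusion requires the unbounded interval $[t_1,\infty)$ and the rescaling limit $\lim_{t\to\infty}F(t)/t\geq 0$. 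But you only need ``non-increasing'' to propagate $F\geq 0$ backwards from the terminal configuration, so this misstatement is harmless.

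The genuine gap is the endgame. When the process stops without ever producing a segment, you land in a configuration in which every one of the twelve normals is repeated at least twice, and it is \emph{not} true that a single merge puts you into the hypothesis of Lemma~\ref{L:share-vertex}: that lemma needs two normals of one triangle nested between two normals of another inside a half-plane, which is a global feature of the terminal configuration rather than a consequence of one coincidence $u_\ell=u_{\ell\pm1}$. The paper classifies all terminal configurations by the multiplicity pattern of the normals (one normal of multiplicity $4$; six normals of multiplicity $2$; or two of multiplicity $3$ and three of multiplicity $2$), encodes each as a $4\times k$ incidence matrix up to the action of $\Sym_4$ on rows and the dihedral group on columns, discards the matrices that are not geometrically realizable (e.g.\ three consecutive normals belonging to one triangle force a multiplicity-$4$ normal), and checks in every surviving case that each pair of triangles shares a vertex, so that Lemma~\ref{L:share-vertex} applies to all three inequalities in \eqref{e:PTE-4}. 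Moreover, one terminal possibility---two triangles with identical normal fans, i.e.\ homothetic---is not covered by Lemma~\ref{L:share-vertex} at all and needs the Bezout-type inequality $2V_{13}V_{23}\geq V_{12}V_{33}$ of Proposition~\ref{P:homothetic}, which your reduction never invokes. This classification is the bulk of the proof, and it is precisely what your final paragraph defers.
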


\begin{pf} 
As before, let $u_0,\dots, u_{11}$ be the outer unit normals to the sides of the triangles. 
Pick any $u_\ell$ which is strictly between
$u_{\ell-1}$ and $u_{\ell+1}$ and let $K_i$ be the triangle with this normal.
Consider a linear deformation $K_i(t)$ with the moving normal $u_\ell(t)$, as above.
We either increase $t$ in the positive direction or degrease $t$ in the negative direction in such a way that the function $F(t)=-V_{12}V_{34}+ V_{13}V_{24}+ V_{14}V_{23}$ is non-increasing, until either
$K_i(t)$ degenerates to a segment or $u_\ell(t)$ coincides with $u_{\ell-1}$ or $u_{\ell+1}$. 
Note that we can always avoid the case when $K_i(t)$ degenerates to an infinite half-strip, since
if $u_\ell(t)$ stays between $u_{\ell-1}$ and $u_{\ell+1}$ for all $t\to\infty$ then, by  \rl{half-strip},
$F(t)$ remains constant and we can choose the negative direction of $t$ instead.



Repeating the process we either arrive at the case when one of the $K_i$ is a segment, in which case the
statement follows from \rp{segment}, or all $K_i$ are triangles and 
in the configuration $\{u_0,\dots, u_{11}\}$ each normal is repeated at least once. 
Moreover, if a triple of normals in
$\{u_0,\dots, u_{11}\}$ belongs to the same triangle, then 
the normals positively span $\R^2$ (i.e. not all lie in a half-plane).

%

We next sort out all such configurations. We call the {\it multiplicity} of the normal in the configuration the number of times it is repeated.
First, assume that there is a normal with multiplicity~4. Then we are in the
case when \rl{share-vertex} applies to every pair $K_i$, $K_l$, $1\leq i\neq l\leq 4$, hence, all three Pl\"ucker-type inequalities hold.

Now we assume that each multiplicity is either 2 or 3. Suppose, for some $\ell$ the normals $u_{\ell-1}, u_\ell, u_{\ell+1}$ belong to the same triangle, say $K_4$. This means that each normal of the other three triangles is either contained in the sector $[u_{\ell+1},u_{\ell-1}]$ or coincides with $u_\ell$. As 
$\{u_{\ell+1},u_{\ell-1}\}$ does not positively span $\R^2$, exactly one of the normals of $K_1$, $K_2$, and $K_3$ must coincide with $u_\ell$.
In other words, $u_\ell$ has multiplicity 4. Therefore, no three consecutive normals may belong to the same triangle. This argument also shows that
a configuration with four normals, each with multiplicity 3 cannot exist.

Suppose there are six normals with multiplicity 2 each. To encode a possible configuration we use 
a $4\times 6$ matrix $M$ whose $(i,j)$-th entry is 1 if $j$-th normal belongs to $K_i$, and is 0, otherwise. We may assume that 
no two rows of $M$ are the same, as this corresponds to two triangles begin homothetic, which is already covered in \rp{homothetic}. Moreover, it is enough to consider classes of such matrixes under the action of $S_4$ on the rows of $M$ and of the dihedral group $D_6$ (as a subgroup of $S_6$) on the columns of $M$.
A computer search produces 5 such matrices. Figures~\ref{F:config-1}, \ref{F:config-2}, and \ref{F:config-3}  depict the configuration of four triangles and their six unit normals, corresponding to the matrices
$
\left(
\begin{matrix}
0 & 0 & 1 & 0 & 1 & 1\\
0 & 0 & 1 & 1 & 0 & 1\\
1 & 1 & 0 & 0 & 1 & 0\\
1 & 1 & 0 & 1 & 0 & 0
\end{matrix}
\right)
$, $\left(
\begin{matrix}
0 & 0 & 1 & 0 & 1 & 1\\
0 & 1 & 0 & 1 & 0 & 1\\
1 & 0 & 1 & 0 & 1 & 0\\
1 & 1 & 0 & 1 & 0 & 0
\end{matrix}
\right)$, and 
$\left(
\begin{matrix}
0 & 0 & 1 & 0 & 1 & 1\\
0 & 1 & 0 & 1 & 0 & 1\\
1 & 0 & 1 & 1 & 0 & 0\\
1 & 1 & 0 & 0 & 1 & 0
\end{matrix}
\right)$,
respectively. 
\begin{figure}[h]
\begin{center}
\includegraphics[scale=.26]{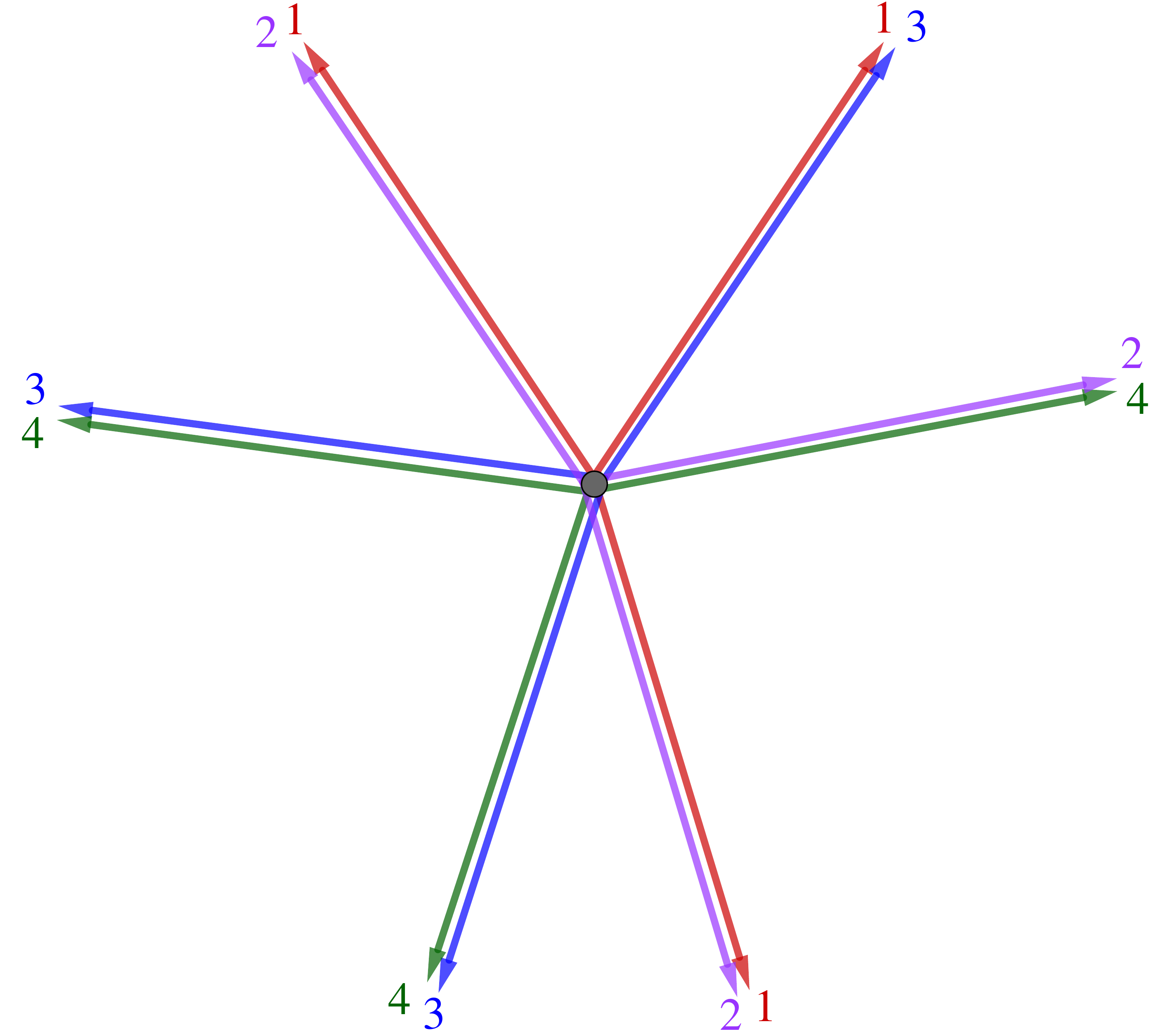} \ \ \ \ \ \ 
\includegraphics[scale=.26]{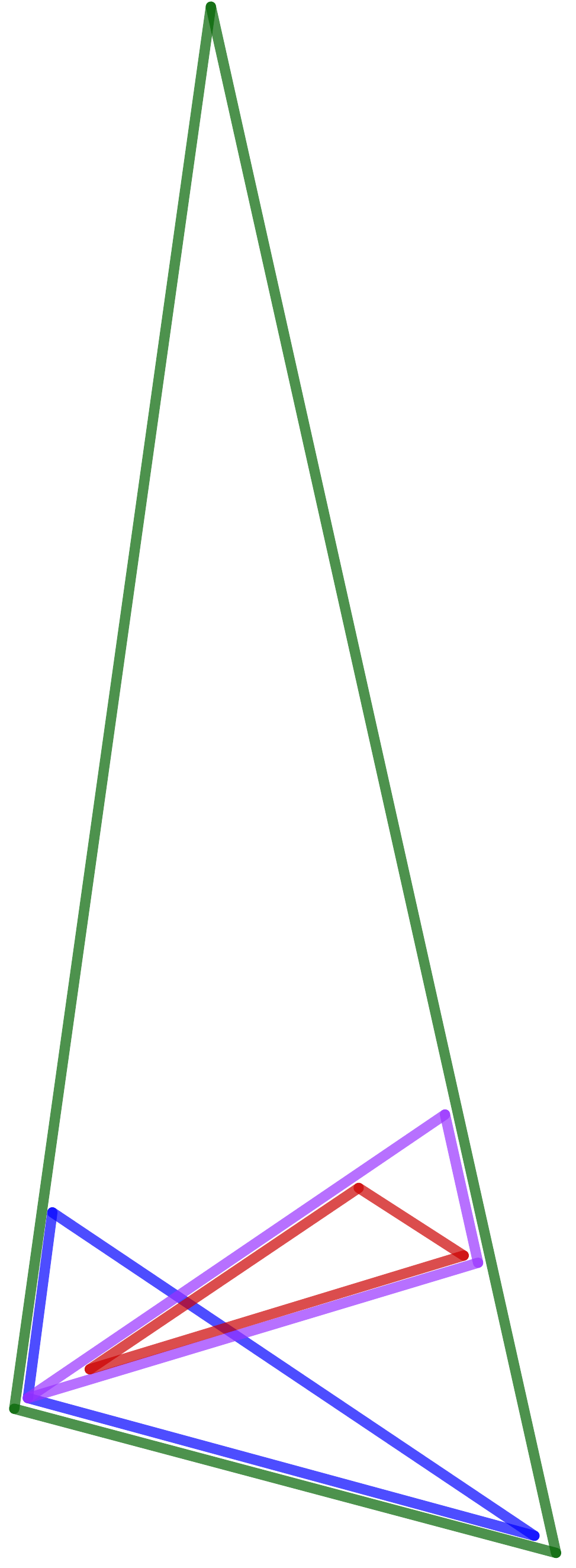}
\end{center}
\caption{First configuration of the unit normals and the triangles}
\label{F:config-1}
\end{figure}

\begin{figure}[h]
\begin{center}
\includegraphics[scale=.26]{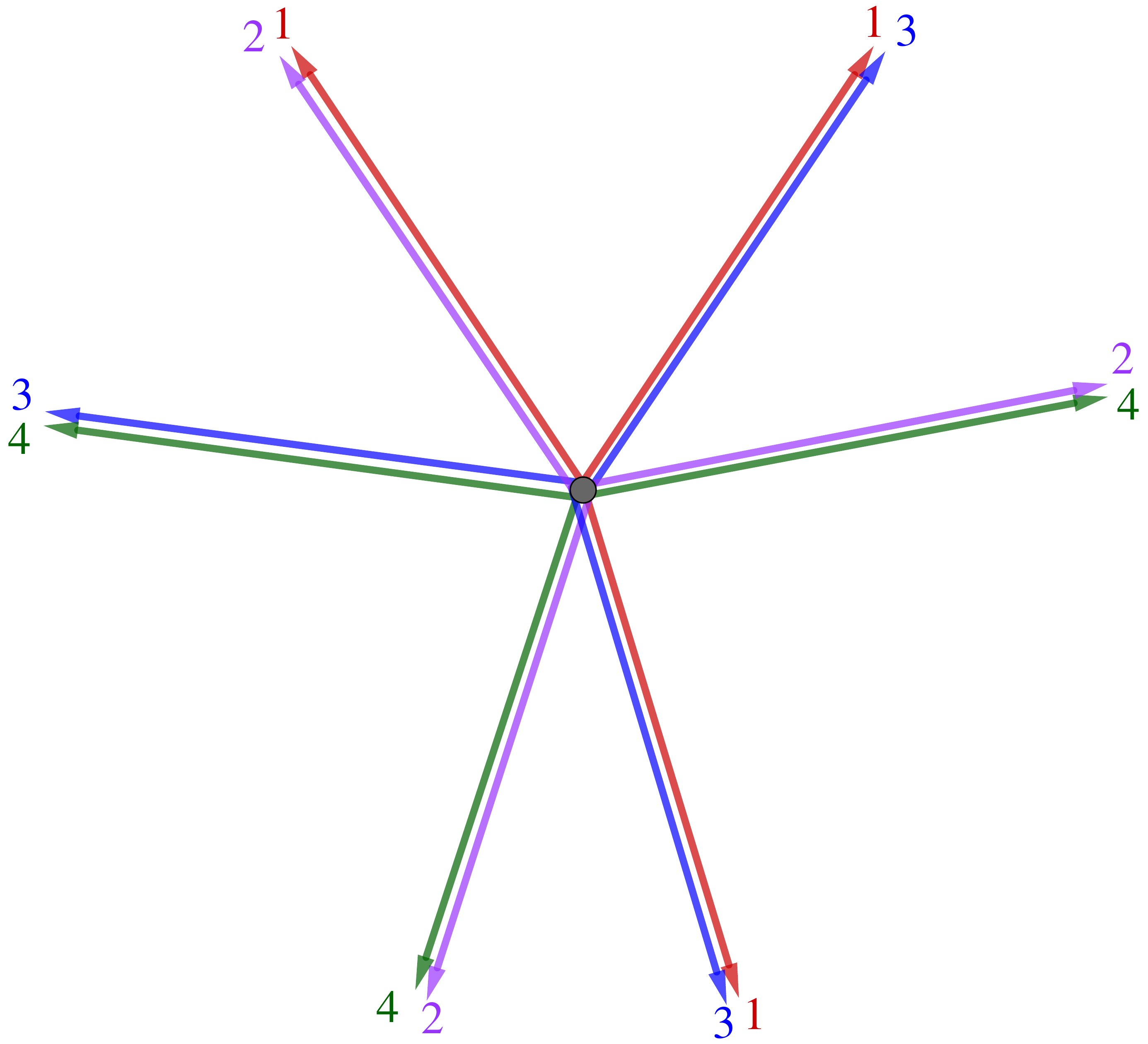} \ \ \ \ \ \ 
\includegraphics[scale=.26]{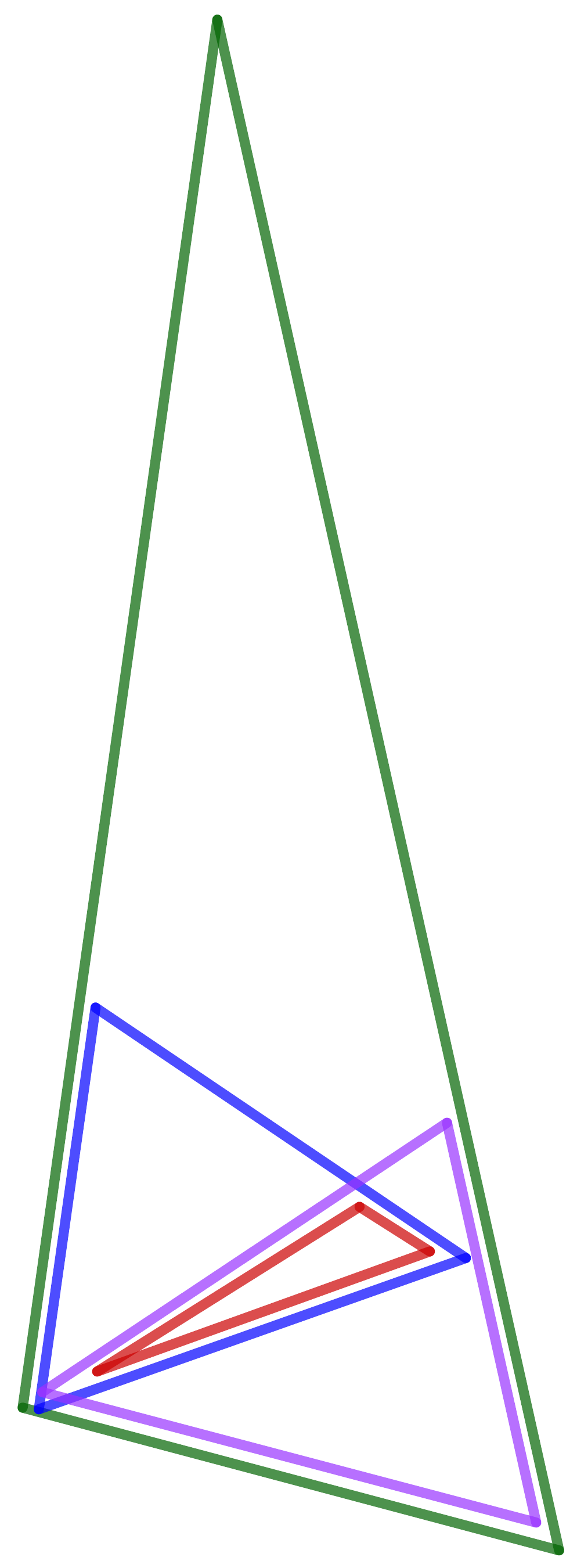}
\end{center}
\caption{Second configuration of the unit normals and the triangles}
\label{F:config-2}
\end{figure}

\begin{figure}[h]
\begin{center}
\includegraphics[scale=.26]{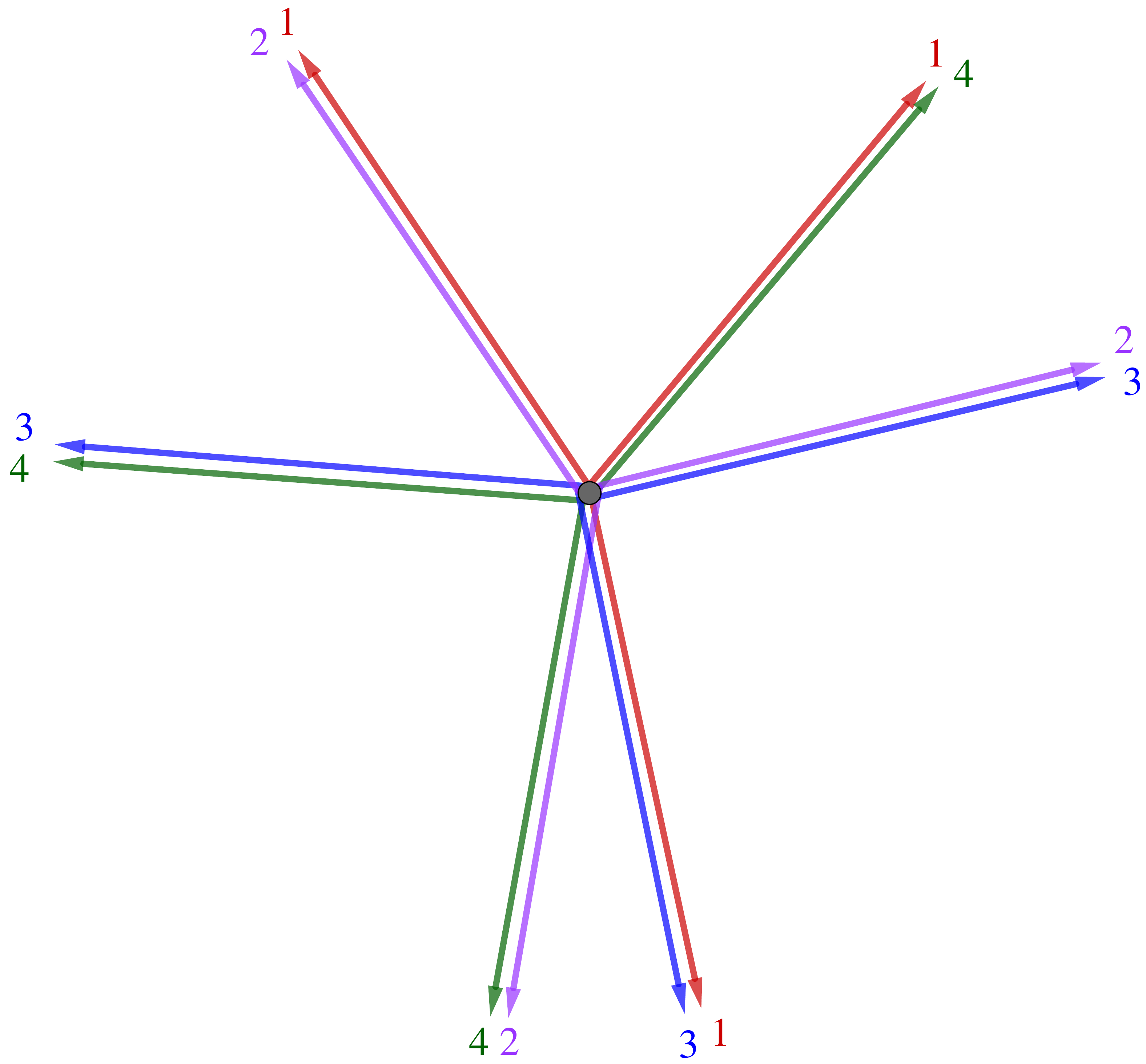} \ \ \ 
\includegraphics[scale=.21]{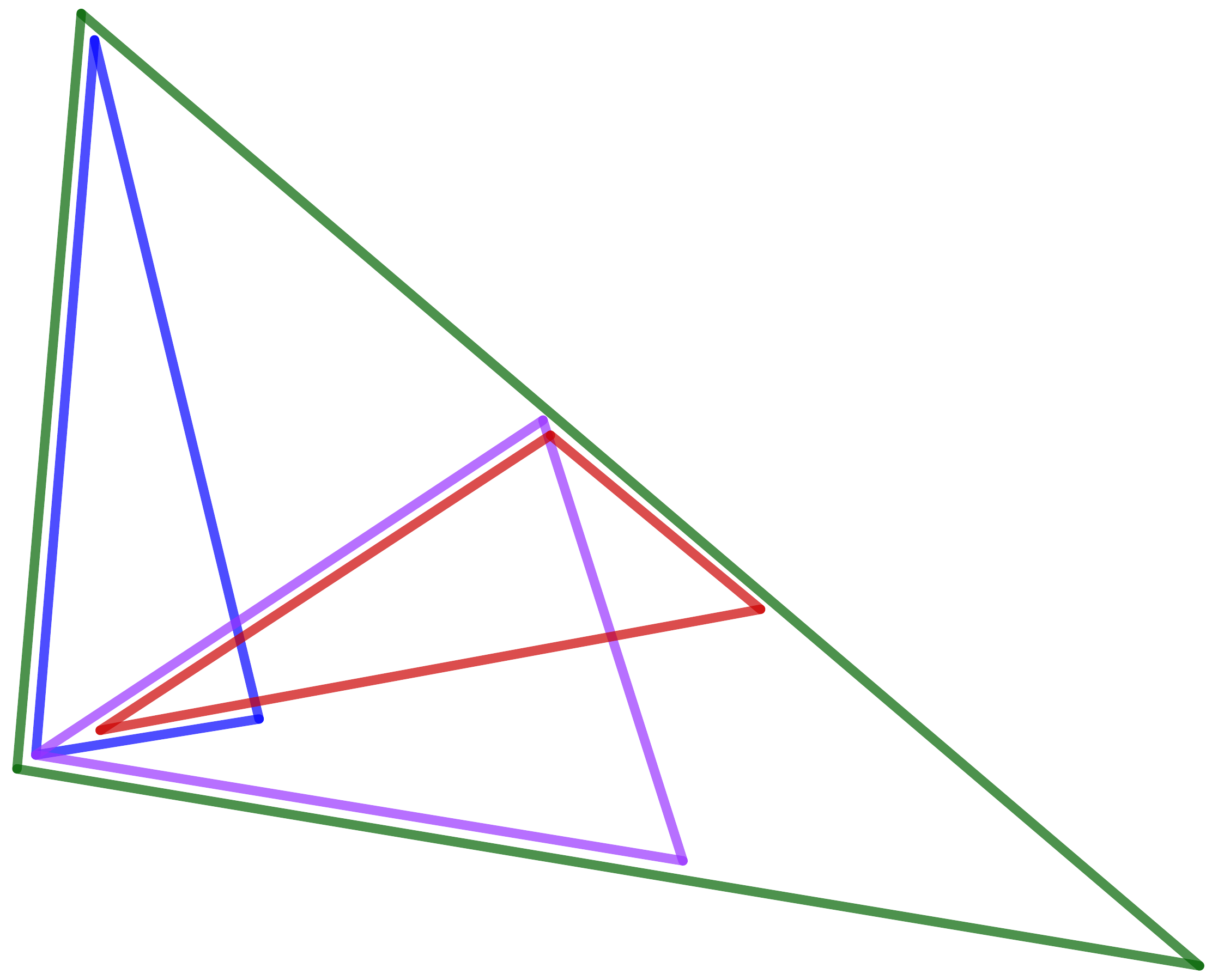}
\end{center}
\caption{Third configuration of the unit normals and the triangles}
\label{F:config-3}
\end{figure}
In the figures we rescaled and translated the triangles so that three of them are inscribed in the fourth. Note that
in each case, every pair of triangles $(K_i, K_j)$ shares a vertex (the left-most one). Thus, by
\rl{share-vertex}, all three Pl\"ucker-types inequalities are satisfied.  

The remaining two matrices
$\left(
\begin{matrix}
0 & 0 & 1 & 0 & 1 & 1\\
0 & 1 & 0 & 1 & 1 & 0\\
1 & 0 & 1 & 0 & 0 & 1\\
1 & 1 & 0 & 1 & 0 & 0
\end{matrix}
\right)$ and
$\left(
\begin{matrix}
0 & 0 & 1 & 0 & 1 & 1\\
0 & 1 & 1 & 0 & 0 & 1\\
1 & 0 & 0 & 1 & 1 & 0\\
1 & 1 & 0 & 1 & 0 & 0
\end{matrix}
\right)
$
do not correspond to any configuration of triangles. Indeed, for the latter matrix, note that 
all three normals of the fourth triangle are contained in the sector between two consecutive
normals of the third triangle, which is impossible. Similarly, for the former matrix: all normals
of the first triangle are contained between two consecutive
normals of the third one, a contradiction.


It remains to consider the case of two normals with multiplicity 3 and three with multiplicity 2. A computer search shows that,
up to the actions of $S_4$ and $D_5$, there is only one such configuration (see \rf{config-4}), corresponding to the matrix
$$
\left(
\begin{matrix}
0 & 1 & 0 & 1 & 1 \\
0 & 1 & 1 & 0 & 1 \\
1 & 0 & 1 & 0 & 1 \\
1 & 0 & 1 & 1 & 0
\end{matrix}
\right).
$$

\begin{figure}[h]
\begin{center}
\includegraphics[scale=.26]{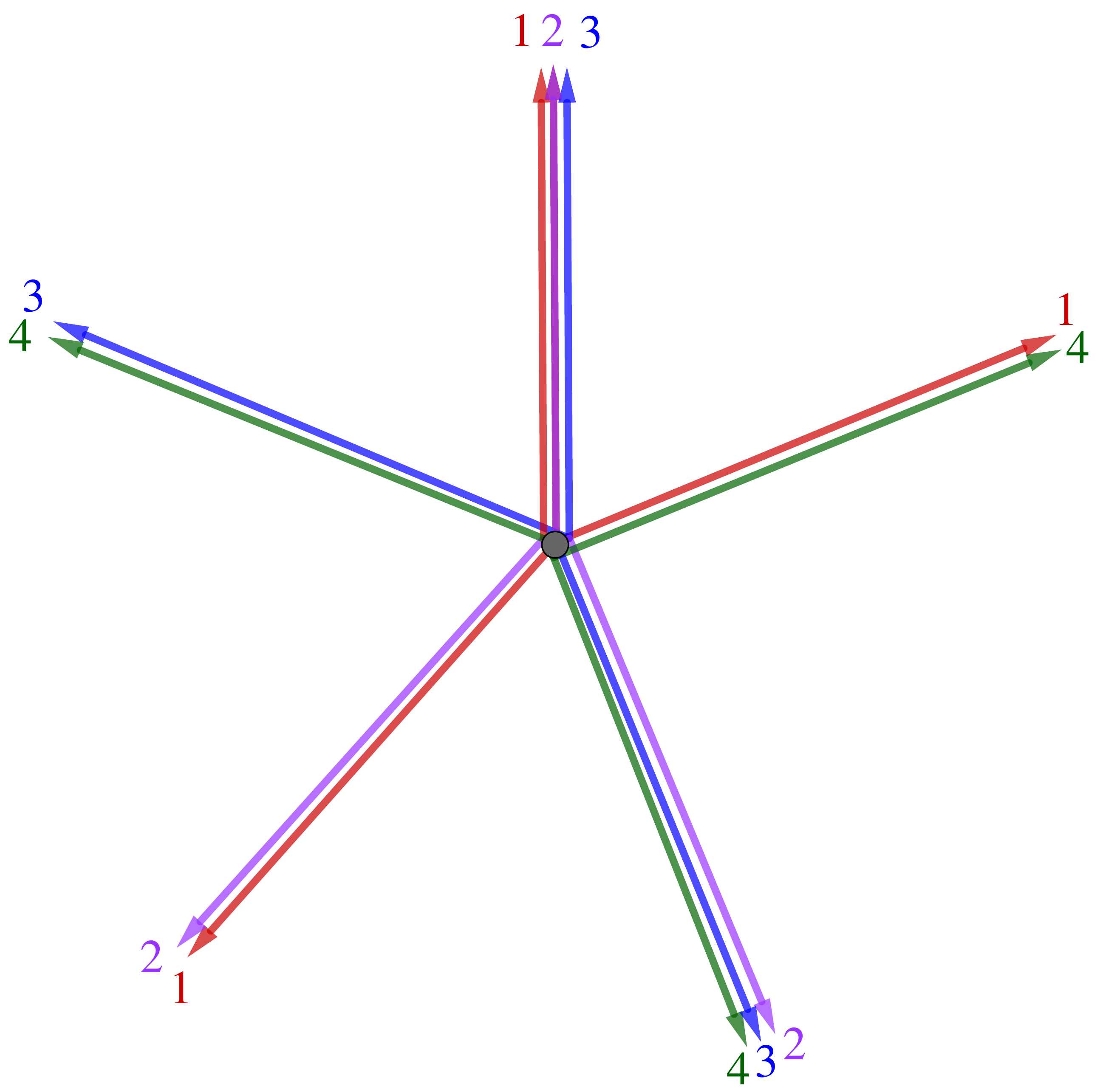} \ \ \ 
\includegraphics[scale=.26]{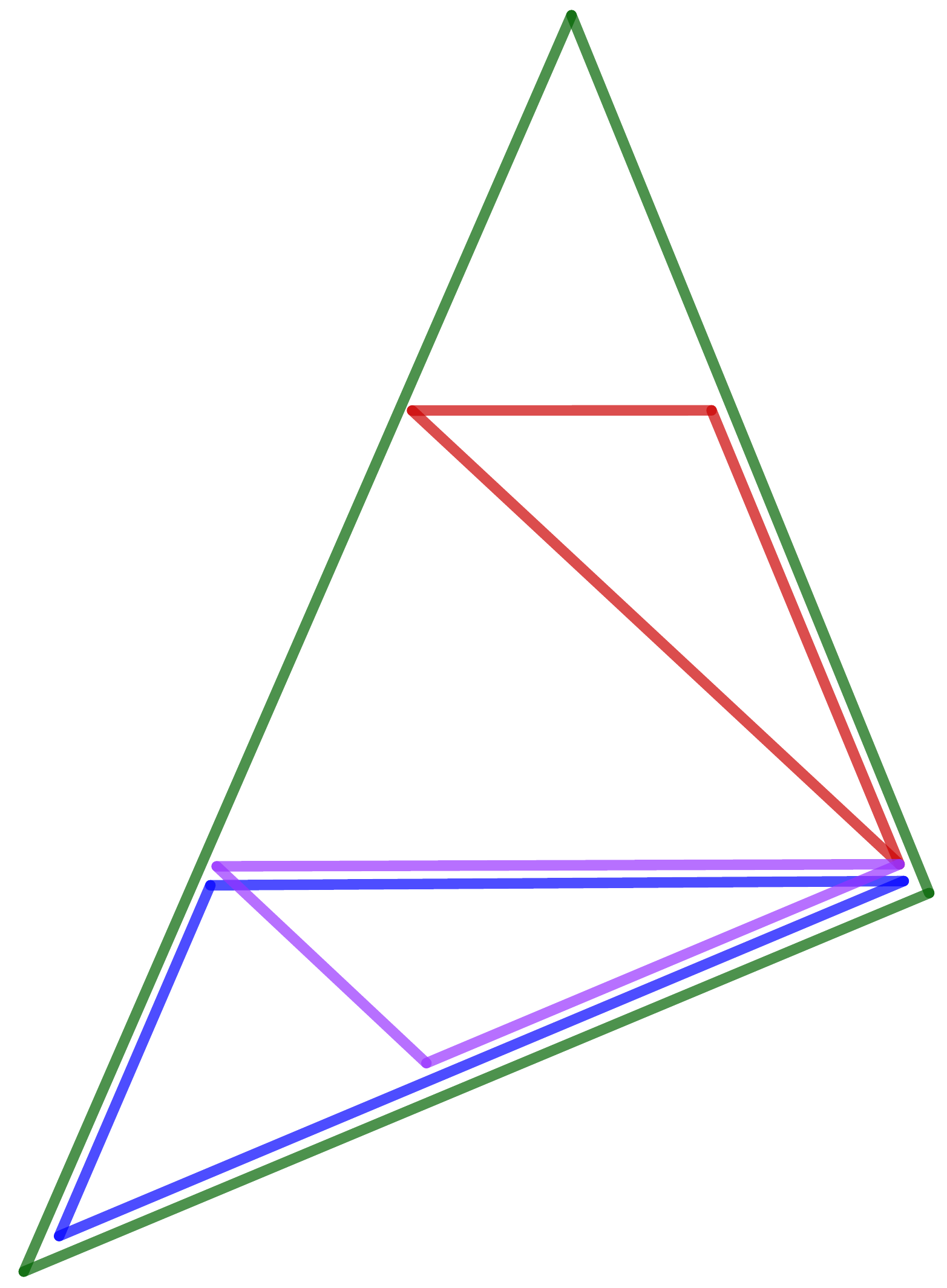}
\end{center}
\caption{The only configuration with five distinct unit normals}
\label{F:config-4}
\end{figure}

Again, we see that every pair of the triangles shares a vertex (the right-most one).
Applying  \rl{share-vertex} we obtain the three Pl\"ucker-type inequalities.
\end{pf}

\begin{rem} Although in the proof we used a computer to enumerate all possible configurations, this combinatorial problem is small enough to be solved by hand. The general study of combinatorial configurations with different kinds of balance conditions and symmetry is the subject of combinatorial design theory, see for example \cite{ColbournDinitz2007}.
\end{rem}

\section{Pl\"ucker space $\Pl_{n}$ and its graph representation}\label{S:graphs}

We next describe a combinatorial structure of the space defined by the Pl\"ucker-type inequalities which turns out to be useful when 
showing the reverse inclusion in \rt{PMV-4-2}. 
Denote
$$\Pl_{n} := \left\{v\in\R_{\geq 0}^{n\choose 2} \,:\, 
v_{ij}v_{kl}\leq v_{ik}v_{jl}+v_{il}v_{jk}\text{ for }\{i,j\}\sqcup\{k,l\}\subseteq[n] \right\}.$$
Note that $\Pl_{2}=\R_{\geq 0}$ and $\Pl_3=\R_{\geq 0}^3$. For convenience we set $\Pl_1:=\{0\}$.

\subsection{Graph representation}
To each $v=(v_{ij})\in\Pl_{n}$ we associate a weighted graph $G_v$ with vertex set $V$ and edge set $E$ as follows. We put $V=\{1,\dots, n\}$
and $\{i, j \} \in E$ if and only if  $v_{ij} >0$, in which case $v_{ij}$ is the weight of the edge $\{i,j\}$. We call $G_v$ the
{\it graph representing $v\in \Pl_{n} $}.

Throughout this section we use $\KG_n$ to denote the complete graph on $\{1,\dots,n\}$. Furthermore, 
$\KG_{n_1,\dots,n_s}$ will denote a complete multipartite graph on $\{1,\dots, m\}$ corresponding to a partition 
$m=n_1 + \cdots + n_s $ with $s \ge 2$ and $1\leq n_1\leq \dots\leq n_s$.

The following proposition shows that the graphs $G_v$ have a special combinatorial property.

\begin{prop} \label{P:no:isolated:points:in:G}
For any $v\in\Pl_n$ the graph $G_v$ is a union of a complete multipartite graph and some isolated nodes. 
\end{prop}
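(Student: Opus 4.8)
The plan is to translate the assertion into a purely combinatorial condition on $G_v$ and then extract that condition from a \emph{single} Pl\"ucker-type inequality \re{PTE}. A finite simple graph is the union of a complete multipartite graph $\KG_{n_1,\dots,n_s}$ and some isolated nodes precisely when, on its set of non-isolated vertices, non-adjacency is an equivalence relation: the classes become the parts, two vertices in a common part are non-adjacent, two vertices in distinct parts are adjacent, and the remaining vertices are the isolated nodes. So I would first fix $v\in\Pl_n$, let $W\subseteq[n]$ be the set of non-isolated vertices of $G_v$ (those $i$ with $v_{ij}>0$ for some $j$), and define a relation on $W$ by $i\approx j$ iff $i=j$ or $v_{ij}=0$.

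Reflexivity and symmetry of $\approx$ are immediate, so the whole content is transitivity, which I would phrase as a forbidden configuration: there are no three distinct vertices $i,j,k\in W$ with $v_{ij}=v_{jk}=0$ but $v_{ik}>0$. Equivalently, $G_v$ contains no induced subgraph on three vertices with exactly one edge, i.e.\ no induced $K_2\cup K_1$ all of whose vertices are non-isolated.

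To rule out this configuration I would argue by contradiction. Suppose such $i,j,k$ exist. Since $j\in W$ is non-isolated while $v_{ij}=v_{jk}=0$, every neighbor $l$ of $j$ satisfies $l\notin\{i,j,k\}$; fixing such an $l$ yields four distinct indices $i,j,k,l$ with $v_{jl}>0$. Applying \re{PTE} to the partition $\{i,k\}\sqcup\{j,l\}$ gives
\[
v_{ik}v_{jl}\le v_{ij}v_{kl}+v_{il}v_{kj}=0,
\]
because $v_{ij}=v_{kj}=0$. This contradicts $v_{ik}v_{jl}>0$, proving transitivity. Note that the existence of $l$ automatically forces $n\ge4$, so the inequality one needs is always available; for $n\le3$ no bad configuration can arise in the first place.

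Finally I would assemble the structure: the classes of $\approx$ partition $W$ into parts $P_1,\dots,P_s$, and by construction $G_v$ restricted to $W$ is the complete multipartite graph $\KG_{|P_1|,\dots,|P_s|}$ while $[n]\setminus W$ consists of isolated nodes; if $W$ carries an edge then $s\ge2$ (and if $G_v$ has no edges at all it is simply a set of isolated nodes). I do not expect a genuine obstacle here, since the argument reduces to one Pl\"ucker-type inequality; the only points requiring care are the bookkeeping of isolated versus non-isolated vertices and the observation that the neighbor $l$ of $j$ is forced to be distinct from $i$ and $k$, which is exactly what makes the four indices in \re{PTE} pairwise distinct.
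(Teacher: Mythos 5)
Your proof is correct, and it reaches the same combinatorial goal as the paper's proof --- showing that non-adjacency is transitive on the set of non-isolated vertices of $G_v$, equivalently that each connected component of the complement is complete --- but your execution is noticeably more direct. The paper first converts the consequence of \re{PTE} (if $v_{ik}=v_{il}=0$ then $v_{ij}v_{kl}=0$) into a catalogue of three forbidden induced subgraphs on four vertices; then, given non-edges $\{a,b\}$ and $\{b,c\}$, it picks neighbors $a'$ of $a$ and $b'$ of $b$, uses the forbidden subgraphs to show that $a$, $b$, $c$ admit a common neighbor $a'$, and finally rules out the edge $\{a,c\}$ by exhibiting a forbidden configuration on $\{a,a',b,c\}$. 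You instead pick a neighbor $l$ of the \emph{middle} vertex $j$, observe that $v_{ij}=v_{jk}=0$ forces $l\notin\{i,k\}$ so that $i,j,k,l$ are pairwise distinct, and kill $v_{ik}$ with the single instance
\[
v_{ik}v_{jl}\le v_{ij}v_{kl}+v_{il}v_{jk}=0
\]
of \re{PTE}. This bypasses both the common-neighbor step and the forbidden-subgraph catalogue (which the paper does not reuse elsewhere), at no loss of content. Your bookkeeping of the degenerate cases is also sound: for $n\le 3$ the middle vertex of a bad triple would have no admissible neighbor and hence be isolated, the empty edge set gives pure isolated nodes, and a nonempty edge set forces at least two parts.
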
 
\begin{proof} Without loss of generality we may assume that $\{m+1,\dots, n\}$ are the isolated nodes of $G_v$ for some $0\leq m\leq n$. Let $G$
be the induced subgraph of $G_v$ on $\{1,\dots, m\}$ which has no isolated nodes. We need to show that $G$ is a complete multipartite graph.
Consider the  Pl\"ucker-type inequality $v_{ij}v_{kl}\leq v_{ik}v_{jl}+v_{il}v_{jk}${ for some }$\{i,j,k,l\}$ satisfying $\{i,j\}\sqcup\{k,l\}\subseteq[m]$. 
Note that if $v_{ik}=v_{il}=0$ then either $v_{ij}=0$ or $v_{kl}=0$ (or both). Combinatorially, this means that $G$ cannot contain
the following graphs on four nodes as induced subgraphs: (a) the union of $\KG_3$ and $\KG_2$ sharing exactly one node; (b) the path of length three; (c) the disjoint union of two copies of $\KG_2$, see \rf{three forbidden graphs}.

\begin{figure}[h]
\begin{center}
\includegraphics[scale=.32]{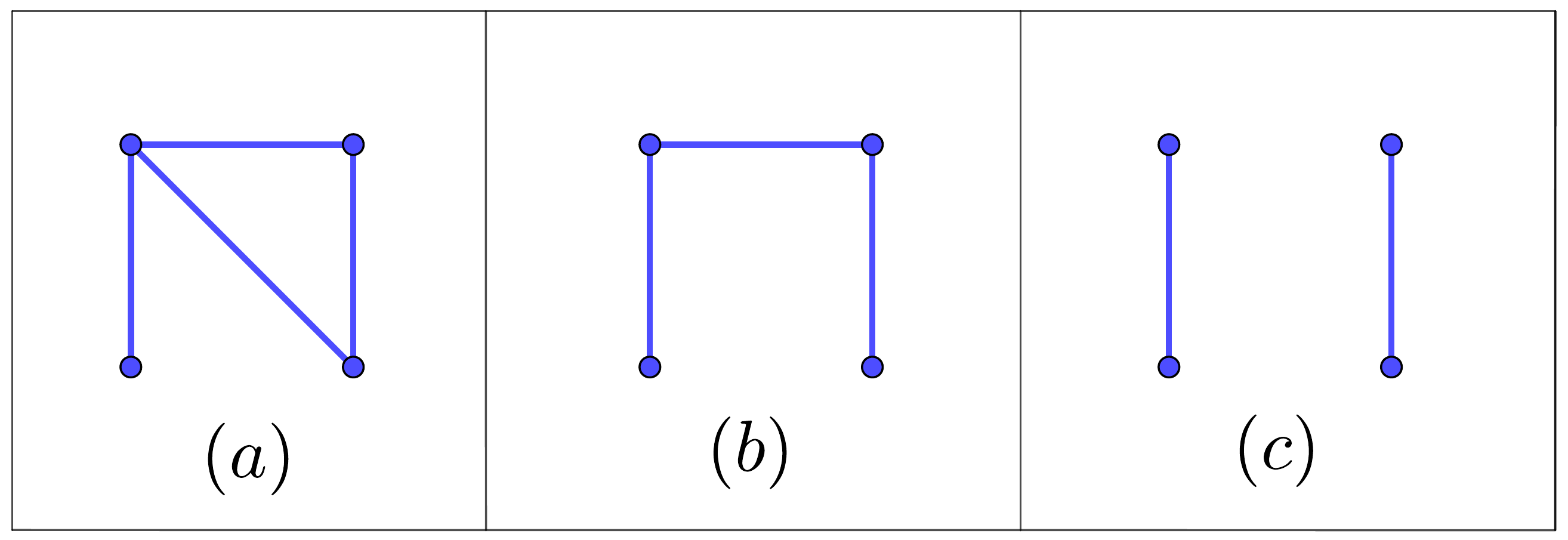}
\end{center}
\caption{Three forbidden graphs}
\label{F:three forbidden graphs}
\end{figure}


To check that $G$ is a complete multipartite graph, we need to show that each connected component of the complement of $G$ is a complete graph.
So let $a,b,c$ be three distinct nodes such that both $\{a,b\}$ and $\{b,c\}$ are edges of the complement $\overline{G}$. It suffices to show that $\{a,c\}$ is also an edge of $\overline{G}$.
	
	Since neither $a$ nor $b$ is an isolated node of $G$, each of them has a neighbor in $G$, which we denote by $a'$ and $b'$, respectively. If $a' \ne b'$, then consider the subgraph of $G$ induced by $\{a,a',b,b'\}$. Then $\{a',b\}$ must be an edge of $G$, otherwise $G$ would contain one of
the graphs (a)--(c) as an induced subgraph. We, thus, have shown that $a$ and $b$ must share a neighbor in $G$. 
Using the same argument for $c$ in place of $b$, we conclude that $\{a',c\}$ must also be an edge of $G$. Summarizing, we conclude that 
$\{a',a\}$, $\{a',b\}$, and $\{a',c\}$ are edges of $G$, whereas $\{a, b\}$ and $\{b,c\}$ are not edges of $G$. 
It follows that $\{a,c\}$ is not an edge of $G$, as otherwise the subgraph induced by $\{a, a', b, c\}$ would be the graph (a), which is impossible. 
\end{proof} 
%
%

We will need the following simple observation.

\begin{lem}\label{L:square}  Let $v \in \Pl_{n}$ such that $v_{ik}=0$ for some $\{i,k\}\subset[n]$. Then for any $\{j,l\}$ satisfying
$\{i,j\}\sqcup\{k,l\}\subseteq[n]$ we have $v_{ij}v_{kl}=v_{il}v_{jk}$.
\end{lem}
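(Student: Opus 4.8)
The plan is to use the three Plücker-type inequalities associated to the four indices $i,j,k,l$ and to observe that setting $v_{ik}=0$ forces two of them into opposite directions. The key structural remark is that, since $\{i,j\}\sqcup\{k,l\}\subseteq[n]$ is a \emph{disjoint} union, the indices $i,j,k,l$ are pairwise distinct; hence each of the three ways of partitioning $\{i,j,k,l\}$ into two pairs yields a genuine instance of \re{PTE}.

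First I would record the two relevant instances. For the partition $\{i,j\}\sqcup\{k,l\}$ the inequality reads
\[
v_{ij}v_{kl}\leq v_{ik}v_{jl}+v_{il}v_{jk},
\]
and for the partition $\{i,l\}\sqcup\{j,k\}$ it reads
\[
v_{il}v_{jk}\leq v_{ij}v_{kl}+v_{ik}v_{jl}.
\]
Next I would substitute the hypothesis $v_{ik}=0$. In both inequalities the term $v_{ik}v_{jl}$ vanishes (regardless of the value of $v_{jl}$, which need not be zero), so they collapse to $v_{ij}v_{kl}\leq v_{il}v_{jk}$ and $v_{il}v_{jk}\leq v_{ij}v_{kl}$, respectively. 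Combining these two opposite inequalities immediately gives the claimed equality $v_{ij}v_{kl}=v_{il}v_{jk}$.

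There is essentially no obstacle here: the only point requiring care is to confirm that the two chosen partitions are legitimate instances of the Plücker-type inequalities, which is automatic once one notes that $i,j,k,l$ are distinct. The third inequality, for the partition $\{i,k\}\sqcup\{j,l\}$, is not needed; after setting $v_{ik}=0$ its left-hand side $v_{ik}v_{jl}$ vanishes and it becomes the trivially true statement $0\leq v_{ij}v_{kl}+v_{il}v_{jk}$.
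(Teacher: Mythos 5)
Your proof is correct and matches the paper's own argument exactly: both use the two Plücker-type inequalities $v_{ij}v_{kl}\leq v_{ik}v_{jl}+v_{il}v_{jk}$ and $v_{il}v_{jk}\leq v_{ij}v_{kl}+v_{ik}v_{jl}$, substitute $v_{ik}=0$, and combine the resulting opposite inequalities. No issues.
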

\begin{proof} This follows immediately from the two Pl\"ucker-type inequalities $v_{ij}v_{kl}\leq v_{ik}v_{jl}+v_{il}v_{jk}$
and $v_{il}v_{jk} \leq v_{ij}v_{kl}+v_{ik}v_{jl}$.
\end{proof}

There is a natural embedding of $\Pl_{n-1}$ as a subset of $\Pl_n$ via
\begin{equation}\label{e:embedding}
\iota_n:\Pl_{n-1}\to\Pl_{n},\quad v\mapsto (v_{ij}),\ \text{where}\ v_{in}=0\ \text{for all}\ i\in [n-1].
\end{equation}
Indeed, if $v_{in}=0$ for all $i\in [n-1]$ then all the Pl\"ucker-type inequalities of the form
$$v_{ij}v_{nl}\leq v_{in}v_{jl}+v_{il}v_{jn}$$ become trivial and only the defining inequalities for $\Pl_{n-1}$ remain.
On the combinatorial side, $G_{\iota_n(v)}$ is the graph obtained from $G_v$ by appending the $n$-th node as an isolated node.

\subsection{Action of $\R_{>0}^n\times\Sym_n$} 
We next consider the action of the group $\G_n:=\R_{>0}^n\times\Sym_n$ on $\R^{n\choose 2}$ given by
$$(\lambda,\sig)\cdot (v_{ij})=\left(\lambda_{\sig(i)}\lambda_{\sig(j)} v_{\sig(i)\sig(j)}\right),\quad\text{for } (v_{ij})\in\R^{n\choose 2}.$$ 
Here $\Sym_n$ is the symmetric group on $n$ elements. It is easy to see that both
$\PMV(n,2)$ and $\Pl_{n}$ are invariant under the action of $\G_n$. For $\PMV(n,2)$ it follows since we can permute and rescale by positive
scalars the convex sets in the tuple $K=(K_1,\dots, K_n)$. For $\Pl_{n}$ this is true as the  Pl\"ucker-type inequalities \re{PTE}
are symmetric and homogeneous. Therefore, when proving $\Pl_{n}\subseteq\PMV(n,2)$ it is enough to check that
a representative from each $\G_n$-orbit in $\Pl_{n}$ lies in $\PMV(n,2)$. 

Let $\tilde \Pl_n=\Pl_n/\G_n$ be the quotient space. Using the embedding \re{embedding} we can identify $\tilde \Pl_{n-1}$ with a subset of $\tilde\Pl_n$. We have the following ascending chain 
$$\tilde \Pl_1\subset \tilde \Pl_2\subset\cdots\subset \tilde \Pl_{n-1}\subset \tilde \Pl_n\subset\cdots$$
The next proposition describes the difference set $\tilde \Pl_n\setminus \tilde \Pl_{n-1}$.

\begin{prop}\label{P:dim-of-orbits}
For a partition $n=n_1 + \cdots + n_s $ with $s \ge 2$ and $1\leq n_1\leq \dots\leq n_s$, let 
$$\Pl_{n_1,\dots,n_s}:=\{v\in\Pl_{n}\,:\,G_v\cong\KG_{n_1,\dots,n_s}\}.$$
Then 
$$\tilde\Pl_{n}\setminus\tilde\Pl_{n-1}=\bigcup_{n=n_1 + \cdots + n_s}\tilde\Pl_{n_1,\dots,n_s},$$
where the union is over all partitions of $n$ as above. Moreover, $\dim \tilde\Pl_{n_1,n_2}=0$ and 
$$
\dim \tilde\Pl_{n_1,\dots,n_s}\leq \sum_{1\leq i<j\leq s}n_in_j-n.
$$ 
\end{prop}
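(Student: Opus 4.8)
The plan is to separate the set-theoretic identity from the dimension estimates. For the identity, I would rely directly on \rp{no:isolated:points:in:G}. The key observation is that the $\G_n$-action scales coordinates by positive factors and permutes the index pairs, so it preserves the number of isolated nodes of $G_v$; moreover, a representative with an isolated node lies, after a permutation placing that node last, in the image of the embedding $\iota_n$ from \re{embedding}, hence its class lies in $\tilde\Pl_{n-1}$. Conversely, every element of $\tilde\Pl_{n-1}\subseteq\tilde\Pl_n$ has a representative with an isolated node. Thus $\tilde\Pl_n\setminus\tilde\Pl_{n-1}$ consists exactly of the classes $[v]$ whose graph $G_v$ has no isolated node. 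By \rp{no:isolated:points:in:G} such a $G_v$ is a complete multipartite graph $\KG_{n_1,\dots,n_s}$ on all $n$ nodes (with $s\ge 2$, since at least one edge is present). Since the isomorphism type of $G_v$ is a $\G_n$-invariant and distinct partitions give non-isomorphic graphs, this partitions $\tilde\Pl_n\setminus\tilde\Pl_{n-1}$ into the pieces $\tilde\Pl_{n_1,\dots,n_s}$, giving the claimed union.

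For the dimension statements I would study the orbits of the subgroup $\R_{>0}^n\subseteq\G_n$ (the finite factor $\Sym_n$ does not affect dimension). The stabilizer of $v\in\Pl_{n_1,\dots,n_s}$ is $\{\lambda\in\R_{>0}^n:\lambda_i\lambda_j=1\text{ for every edge }\{i,j\}\text{ of }\KG_{n_1,\dots,n_s}\}$, which depends only on the zero-pattern $G_v$ and is therefore constant on $\Pl_{n_1,\dots,n_s}$; consequently $\dim\tilde\Pl_{n_1,\dots,n_s}=\dim\Pl_{n_1,\dots,n_s}-\dim(\text{orbit})$. I would compute the stabilizer by noting that $\lambda$ must be constant on each part (two nodes in the same part share a neighbour in another part, which forces their $\lambda$-values to agree), so the relations reduce to $\mu_A\mu_B=1$ for the part-values $\mu_A$. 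When $s\ge 3$, comparing three parts forces each squared part-value to equal $1$, hence $\mu_A=1$ for all $A$ and the stabilizer is trivial; when $s=2$ a single free parameter survives. Thus the generic orbit has dimension $n$ for $s\ge 3$ and $n-1$ for $s=2$.

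It remains to bound $\dim\Pl_{n_1,\dots,n_s}$. Since $\Pl_{n_1,\dots,n_s}$ is contained in the coordinate orthant indexed by the edges of $\KG_{n_1,\dots,n_s}$, its dimension is at most the number of such edges, namely $\sum_{1\le i<j\le s}n_in_j$. For $s\ge 3$ this already gives $\dim\tilde\Pl_{n_1,\dots,n_s}\le\sum_{1\le i<j\le s}n_in_j-n$. For $s=2$ I would instead determine $\Pl_{n_1,n_2}$ exactly using \rl{square}: applying it to a pair $i,k$ in one part (so $v_{ik}=0$) and $j,l$ in the other yields $v_{ij}v_{kl}=v_{il}v_{jk}$, i.e.\ the positive matrix $(v_{ij})$ with rows in the first part and columns in the second has all $2\times 2$ minors vanishing and is therefore of rank one, $v_{ij}=a_ib_j$. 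This is an $(n-1)$-dimensional family, and subtracting the orbit dimension $n-1$ shows $\dim\tilde\Pl_{n_1,n_2}=0$; in fact the scaling $\lambda_i=1/a_i$, $\lambda_j=1/b_j$ sends every such $v$ to the all-ones matrix, so $\tilde\Pl_{n_1,n_2}$ is a single point.

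The step I expect to require the most care is the orbit-dimension computation, specifically distinguishing the bipartite case $s=2$ from the case $s\ge 3$ and justifying that the quotient dimension equals the total dimension minus the (constant) orbit dimension for this semialgebraic group action; the rank-one identity coming from \rl{square} is what makes the $s=2$ stratum collapse to a point.
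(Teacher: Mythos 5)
Your argument is correct and arrives at the same conclusions, but the dimension bound is obtained by a genuinely different mechanism than in the paper. The paper constructs an explicit normalized representative in each orbit: it selects $n$ edges of $\KG_{n_1,\dots,n_s}$ whose $n\times n$ incidence submatrix is invertible (possible exactly when the graph is non-bipartite, i.e.\ $s\ge 3$, since the incidence matrix has rank $n$ precisely when the graph contains an odd cycle), takes logarithms, and solves the resulting linear system to scale those $n$ coordinates to $1$; the bound $\sum_{i<j} n_in_j-n$ is then read off from the remaining free coordinates. You instead compute the stabilizer of $v$ in $\R_{>0}^n$ and subtract the orbit dimension. The two computations are equivalent at bottom --- your observation that three or more parts force the part-values $\mu_A$ to equal $1$ is the multiplicative shadow of the odd-cycle/full-rank statement --- but the paper's slice construction has the advantage of exhibiting a concrete semialgebraic set of normalized representatives, whereas your route leans on the identity ``dimension of the quotient equals dimension of the stratum minus the orbit dimension,'' which you rightly flag as the delicate point: to justify it one essentially has to produce a semialgebraic transversal anyway, which is exactly what the normalization does. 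For $s=2$ the two proofs run in opposite order: you apply \rl{square} first to conclude that the positive matrix $(v_{ij})$ indexed by the two parts has all $2\times 2$ minors zero, hence rank one, and then scale it to the all-ones matrix, while the paper first scales a spanning tree of edges to $1$ and then invokes \rl{square} to force the remaining entries to be $1$; both yield $\dim\tilde\Pl_{n_1,n_2}=0$, and your version has the small bonus of identifying $\Pl_{n_1,n_2}$ exactly as the cone of positive rank-one matrices. The set-theoretic decomposition is handled identically in both proofs via \rp{no:isolated:points:in:G} and the embedding \re{embedding}.
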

\begin{proof} The first statement follows from \rp{no:isolated:points:in:G}. For the second statement, first note that 
$\dim\Pl_{n_1,\dots,n_s}$ is at most the number of edges in $G:=\KG_{n_1,\dots,n_s}$, which is $\sum_{1\leq i<j\leq s}n_in_j$. Let $s>2$.
We claim that for any $v\in\Pl_{n_1,\dots,n_s}$ there exists $g\in\G_n$ such that  $n$ of the coordinates of $g\cdot v$ are equal to $1$. This implies the bound for $\dim\tilde\Pl_{n_1,\dots,n_s}$. 

To prove the
claim, we show that there exist a set $E'$ of $n$ edges of $G$ and a vector $\lambda\in\R_{>0}^n$ such that $\lambda_i\lambda_jv_{ij}=1$ for all $\{i,j\}\in E'$. Let $M$ be the incidence matrix of $G$ whose rows correspond to the edges of $G$. It is well-known that $\rk(M)=n$ when $G$ contains an odd cycle (which happens if and only if $s>2$) and $\rk(M)=n-1$, otherwise. (See, for example, \cite[Prop. 1.3]{ OhsugiHibi97}.) 
Let $E'$ be a subset of edges of $G$ for which the corresponding $n\times n$ 
submatrix $M'$ is of full rank. We obtain a polynomial system in $\lambda_i$, $1\leq i\leq n$
$$\{\lambda_i\lambda_jv_{ij}=1\,:\,\{i,j\}\in E'\}.$$
By taking the natural logarithm of both sides we can reduce it to an $n\times n$ 
linear system in $t_i=\ln\lambda_i$, $1\leq i\leq n$
$$\{t_i+t_j=-\ln(v_{ij})\,:\,\{i,j\}\in E'\}.$$
Since $\rk(M')=n$,  this system has a solution for any values of $v_{ij}\in\R_{>0}$, $\{i,j\}\in E'$, which proves the claim.


Now let $s=2$, i.e. $G$ is bipartite. Assume $V_1:=\{1,\dots, n_1\}$ and $V_2:=\{n_1+1,\dots, n\}$ are the two parts of the nodes of $G$. Consider
the set of $n-1$ edges 
$$E'=\left\{\{1,j\}, \{i,n_1+1\}\,:\,i\in V_1, j\in V_2 \right\},$$ 
and let $M'$ be the corresponding $(n-1)\times n$ submatrix of the incidence matrix of $G$. Since $E'$ is a tree, $M'$ is of full rank. As above, 
this implies that, after the action by an element of $\G_n$, the coordinates of $v$ satisfy $v_{1,j}=v_{i,n_1+1}=1$ for all  $i\in V_1$, $j\in V_2$.
By \rl{square}, we have $v_{i,j}v_{1,n_1+1}=v_{i,n_1+1}v_{1,j}$ which implies $v_{i,j}=1$ for all $i\in V_1$, $j\in V_2$. In other words, 
$\dim \tilde\Pl_{n_1,n_2}=0$. 
\end{proof}

This proposition shows that one can view $\tilde\Pl_n$ as a union of non-labeled weighted graphs on $n$ nodes, where each graph is a complete multipartite graph on $m$ nodes together with $n-m$ isolated nodes, for some $0\leq m\leq n$. Moreover, if the graph is non-bipartite, $m$ of the weights can be chosen to be 1; and if it is bipartite all of the weights can be chosen to be 1. Note, that the bound on the dimension can be strict. For example, consider
$\tilde\Pl_{1,1,2}$. Using the group action we can make four of the weights to be $1$ (as long as the corresponding edges do not form a 4-cycle), e.g.,
we can put $v_{12}=v_{23}=v_{13}=v_{24}=1$. (Here we chose the partition $\{2\}\sqcup\{3\}\sqcup\{1,4\}=[4]$.) Since $v_{14}=0$, \rl{square} implies that $v_{34}=1$ as well. Therefore, $\dim\tilde\Pl_{1,1,2}=0$.
Applying \rp{dim-of-orbits} for $n=4$ we see that $\tilde\Pl_4$ consists of seven 0-dimensional and one 2-dimensional components corresponding  to the eight weighted graphs depicted in \rf{P_4}.

\begin{figure}[h]
\begin{center}
\includegraphics[scale=.32]{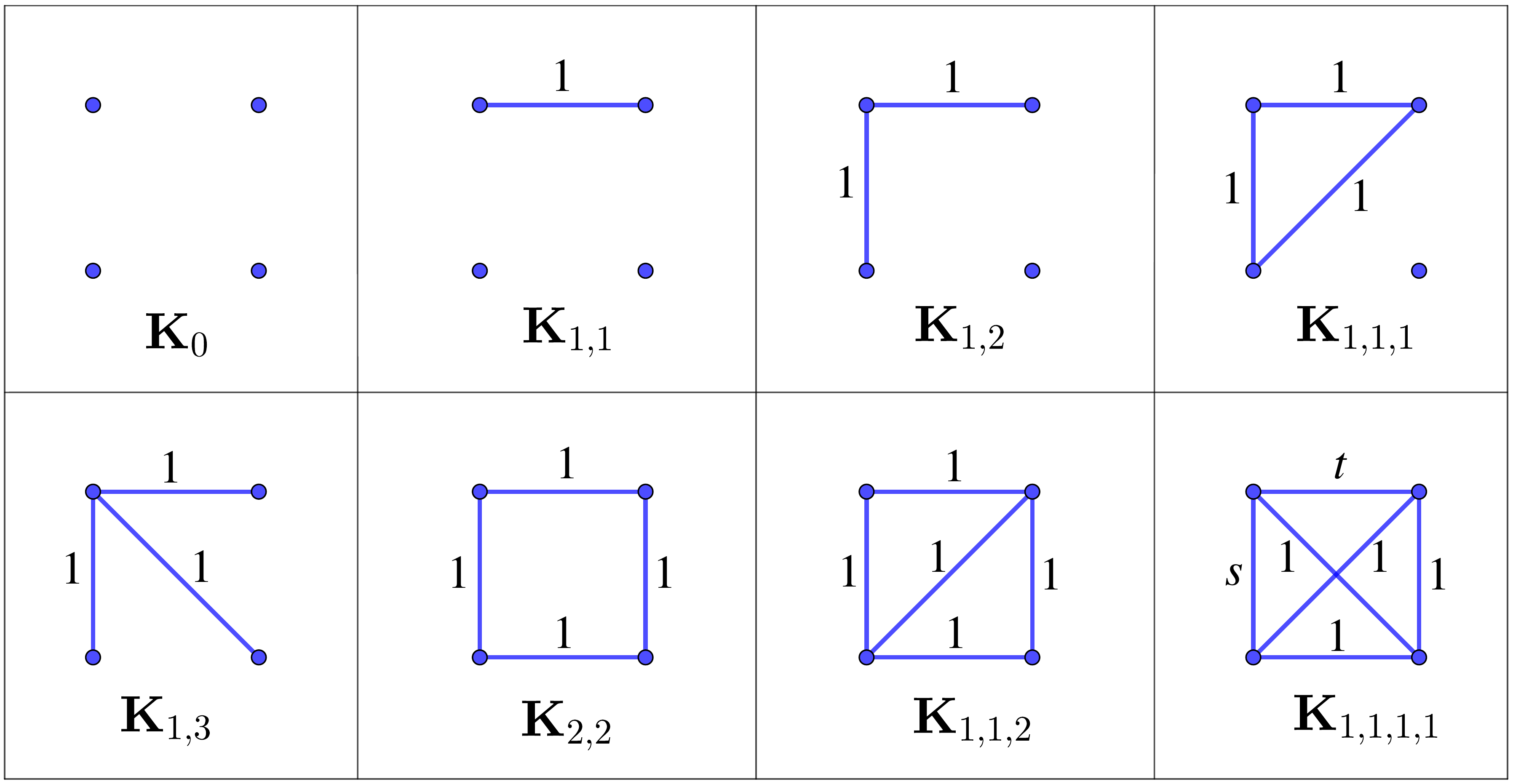}
\end{center}
\caption{The eight weighted graphs corresponding to the eight components of $\tilde\Pl_4$.}
\label{F:P_4}
\end{figure}

\section{The proof of $\PMV(4,2)=\Pl_4$}

In this section we combine the results of Sections \ref{S:PTE} and \ref{S:graphs} to show the equality $\PMV(4,2)=\Pl_4$. 

\begin{proof}[Proof of \rt{PMV-4-2}] Consider $K=(K_1,\dots, K_4)\in{\Km_2\!}^4$ and let $W_K=(V_{12},V_{13},V_{14},V_{23},V_{24},V_{34})$
be the corresponding pure mixed area configuration vector. We need to show that $W_K$ lies in $\Pl_4$, i.e. 
satisfies the Pl\"ucker-type inequalities.
As compact convex sets are approximated by convex polygons in the Hausdorff metric, we may assume that each $K_i$ is a convex polygon. Furthermore, each planar convex polygon is the Minkowski sum of segments and
triangles. Since the Pl\"ucker-type inequalities are linear in the $K_i$, we may assume that each $K_i$ is either a segment or a triangle.
If at least one of the $K_i$ is a segment, by \rp{segment}, $W_K$ satisfies the Pl\"ucker-type inequalities. If all $K_i$ are triangles, we apply 
\rp{triangles}. Therefore, $\PMV(4,2)\subseteq\Pl_4$.

According to the above discussion, to show $\Pl_4\subseteq\PMV(4,2)$ it is enough to check that a representative $v$ from each
of the eight components of $\tilde\Pl_4$ can be realized as a point in $\PMV(4,2)$, see  \rf{P_4}. We make the
choices for the four convex sets $K_1,\dots, K_4$ according to Table~\ref{table-P4}. There we use $\{0\}$ for the origin and 
$I_1,I_2,I_3$ to denote the 
segments $[0,\sqrt{2}e_1]$, $[0,\sqrt{2}e_2]$, and $[0,\sqrt{2}(e_1+e_2)]$, respectively. Note
that $\V(I_i,I_i)=0$ and $\V(I_i,I_j)=1$ for $i\neq j$.
\begin{center}
  \begin{table}[h]
  \renewcommand{\arraystretch}{1.3}
  \begin{tabular}{|l|c|}
    \hline
    $G_v$ & choice of $K_1,\dots, K_4$ \\ 
    \hline
    \hline
    $\KG_0$ & $K_1=K_2=K_3=K_4=\{0\}$ \\ 
    \hline
    $\KG_{1,1}$ & $K_1=I_1$,\, $K_2=I_2$,\, $K_3=K_4=\{0\}$ \\ 
    \hline
    $\KG_{1,2}$ & $K_1=I_1$,\, $K_2=K_3=I_3$,\, $K_4=\{0\}$\\ 
     \hline
    $\KG_{1,1,1}$ & $K_1=I_1$,\, $K_2=I_2$,\, $K_3=I_3$,\, $K_4=\{0\}$\\ 
     \hline
    $\KG_{1,3}$ & $K_1=I_1$,\, $K_2=K_3=K_4=I_2$\\ 
     \hline
    $\KG_{2,2}$ & $K_1=K_4=I_1$,\, $K_2=K_3=I_2$\\ 
     \hline
    $\KG_{1,1,2}$ & $K_1=K_4=I_1$,\, $K_2=I_2$, $K_3=I_3$\\ 
     \hline
    $\KG_{1,1,1,1}$ & \ \ $K_1=\frac{1}{2}(s-t+1)I_1+\frac{1}{2}(-s+t+1)I_2+\frac{1}{2}(s+t-1)I_3$,\ \ \\
    & $K_2=I_1$,\, $K_3=I_2$,\, $K_4=I_3$\\
    \hline
    \end{tabular}
 \vspace{.2cm}
\caption{Realizing representatives in $\tilde\Pl_4$ as elements of $\PMV(4,2)$.}
\label{table-P4}
  \end{table}
\end{center}

We remark that in all of the cases except the last one we realize $v$ using (possibly degenerate) segments. For 
$G_v\cong \KG_{1,1,1,1}$, one of the $K_i$ is a hexagon and the others are segments. The lengths of the hexagon are non-negative
as guaranteed by the Pl\"ucker-type inequalities: $-s+t+1\geq 0$, $s-t+1\geq 0$, $s+t-1\geq 0$.
\end{proof}



\section{Semialgebraicity of $\overline{\PMV(n,2)}$}\label{S:PMV-n-2}

In this section we study semialgebraicity of the pure mixed volume configuration space. 
Recall that a subset $S\subset\R^k$ is called a  {\it basic closed semialgebraic set} if there exist polynomials
$f_1,\dots, f_m$ in the polynomial ring $\R[x_1,\dots,x_k]$ such that $S=\{x\in\R^k\,:\,f_i(x)\geq 0, i\in[m]\}$. A {\it semialgebraic set} is a 
set generated by a finite sequence of set-theoretic operations (union, intersection, and complement) on
basic closed semialgebraic sets. We refer to \cite{BCR} for all necessary definitions regarding semialgebraicity.

As we have seen in the previous section, $\PMV(n,2)=\Pl_n$ for $n=2,3,4$. In particular, it is a basic closed semialgebraic set. 
For the general case, the results of
\rs{PTE} imply that $\PMV(n,2)\subseteq \Pl_n$ for any $n\geq 4$. In \rt{PMV-8-2} below we show that
this inclusion is proper starting with $n\geq 8$. We do not know whether $\PMV(n,2)= \Pl_n$ for $n=5,6,7$. Neither do we know if 
$\PMV(n,2)$ is closed for $n\geq 5$. Nevertheless, we can show that the closure $\overline{\PMV(n,2)}$ is a semialgebraic subset of $\R^{n\choose 2}$, for all $n\geq 2$, see \rt{semialgebraicity}.

We start with a slightly more general situation. We say a map $\phi:\Km_2\to\R^k$
is {\it additive} if
$$\phi(\alpha K+\beta L)=\alpha\,\phi(K)+\beta\,\phi(L),\quad\text{for all } K,L\in\Km_2 \text{ and } \alpha,\beta\in\R_{\geq 0}.$$

Note that the image $\phi(\Km_2)$ is a cone in $\R^k$. We say that $\phi$ is {\it continuous} if it is continuous with respect to the Hausdorff metric
in $\Km_2$ and Euclidean metric in $\R^k$. 
Let $\Pm_2$ denote the space of all convex polytopes in $\R^2$, and $\Pm_{2,m}$ the subset of convex polytopes with at most $m$ vertices. We have the following lemma.

\begin{prop}\label{P:cone-generators} Let $\phi:\Km_2\to\R^k$ be additive. Then  $\phi(\Pm_2)=\phi(\Pm_{2,3k})$. Consequently,
when $\phi$ is additive and continuous, the closure $\overline{\phi(\Km_2)}$ in $\R^k$ coincides with $\overline{\phi(\Pm_{2,3k})}$. 
\end{prop}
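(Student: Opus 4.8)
The plan is to first establish the algebraic equality $\phi(\Pm_2)=\phi(\Pm_{2,3k})$ for any additive $\phi$, and then derive the closure statement using continuity together with polygonal approximation. For the equality, the inclusion $\phi(\Pm_{2,3k})\subseteq\phi(\Pm_2)$ is immediate from $\Pm_{2,3k}\subseteq\Pm_2$, so the content lies in the reverse inclusion.

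The key first step is to identify $\phi(\Pm_2)$ with the convex conic hull $\cone(\phi(\Pm_{2,3}))$ of the images of triangles, where I regard $\Pm_{2,3}$ as triangles together with their degenerations (segments and points). On one hand, since every planar polygon is a Minkowski sum of triangles and segments (recalled in the introduction from \cite[Th 3.2.14]{Schneider2014}), additivity gives $\phi(P)=\sum_i\phi(T_i)$, a conic combination of elements of $\phi(\Pm_{2,3})$. On the other hand, any conic combination $\sum_i\lambda_i\phi(S_i)$ with $\lambda_i\geq 0$ and $S_i\in\Pm_{2,3}$ equals $\phi\!\left(\sum_i\lambda_i S_i\right)$ by additivity (each $\lambda_i S_i$ is again a triangle or segment), and $\sum_i\lambda_i S_i\in\Pm_2$. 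I would also note in passing that $\phi(\{0\})=0$, which follows from $\{0\}+\{0\}=\{0\}$ and additivity, so scaling by $\lambda_i=0$ causes no trouble.

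Next I would apply Carath\'eodory's theorem for convex cones: every point of $\cone(\phi(\Pm_{2,3}))\subseteq\R^k$ is a non-negative combination of at most $k$ elements of $\phi(\Pm_{2,3})$. Hence any $x\in\phi(\Pm_2)$ can be written as $x=\sum_{i=1}^{k}\lambda_i\phi(S_i)=\phi(P)$ with $P=\sum_{i=1}^{k}\lambda_i S_i$ a Minkowski sum of at most $k$ triangles/segments. Since the edge directions of a Minkowski sum are the union of the edge directions of the summands, and each triangle contributes at most three such directions, $P$ has at most $3k$ edges and hence at most $3k$ vertices, i.e.\ $P\in\Pm_{2,3k}$. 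This yields $\phi(\Pm_2)\subseteq\phi(\Pm_{2,3k})$ and therefore equality. For the consequence, assuming $\phi$ additive and continuous, the inclusion $\overline{\phi(\Pm_{2,3k})}\subseteq\overline{\phi(\Km_2)}$ is clear from $\Pm_{2,3k}\subseteq\Km_2$; conversely, every $K\in\Km_2$ is a Hausdorff limit of polygons $P_m\in\Pm_2$ by \cite[Th 1.8.16]{Schneider2014}, so continuity gives $\phi(K)=\lim_m\phi(P_m)$ with $\phi(P_m)\in\phi(\Pm_2)=\phi(\Pm_{2,3k})$, whence $\phi(K)\in\overline{\phi(\Pm_{2,3k})}$. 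Taking closures gives the reverse inclusion and hence the claimed identity.

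I expect the crux to be the identification $\phi(\Pm_2)=\cone(\phi(\Pm_{2,3}))$ combined with the Carath\'eodory reduction to $k$ summands; once this is in place, the vertex-count bound for Minkowski sums and the continuity/approximation argument are routine.
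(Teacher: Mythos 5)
Your proposal is correct and follows essentially the same route as the paper: decompose a polygon into a Minkowski sum of triangles and segments, use additivity to place $\phi(P)$ in a cone over images of triangles, apply Carath\'eodory's theorem for cones to reduce to at most $k$ summands (hence at most $3k$ vertices), and finish the closure statement by continuity and polygonal approximation. The only cosmetic difference is that you identify all of $\phi(\Pm_2)$ with $\cone(\phi(\Pm_{2,3}))$ before invoking Carath\'eodory, whereas the paper applies it directly to the finitely many summands of a given $P$; both are equally valid.
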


\begin{proof} As we mentioned before, every convex polygon $P\in\Pm_2$  is the Minkowski sum of segments and triangles, thus we can write 
$P=\sum_{i=1}^NT_i$ for some $N\in\N$ and $T_i\in\Pm_{2,3}$. By additivity, this implies that $\phi(P)$ lies in the
cone $C\subset\R^k$ generated by $\phi(T_1),\dots, \phi(T_N)$. By the Carath\'eodory theorem for cones, there exist $I\subseteq[N]$ of
size $|I|\leq k$ and $\{\alpha_i \,:\, i\in I\}\subset\R_{> 0}$, such that
$$\phi(P)=\sum_{i\in I}\alpha_i\phi(T_i)=\phi\Big(\sum_{i\in I}\alpha_iT_i\Big).$$ 
As the polytope $\sum_{i\in I}\alpha_iT_i$ has at most $3k$ vertices, we see that $\phi(P)\in\phi(\Pm_{2,3k})$, as claimed.

It remains to show $\overline{\phi(\Pm_{2,3k})}=\overline{\phi(\Km_2)}$. The containment $\overline{\phi(\Pm_{2,3k})}\subseteq\overline{\phi(\Km_2)}$
is clear, as $\Pm_{2,3k}\subset\Km_2$. Conversely, since $\Km_2=\overline{\Pm_2}$ and $\phi$ is continuous, we have
$$\phi(\Km_2)=\phi(\overline{\Pm_2})\subseteq\overline{\phi(\Pm_2)}=\overline{\phi(\Pm_{2,3k})},$$
which implies $\overline{\phi(\Km_2)}\subseteq\overline{\phi(\Pm_{2,3k})}$.
\end{proof}

Recall that a map between two semialgebraic sets is called semialgebraic if its graph is a semialgebraic set, see \cite[Sec 2]{BCR}. The above proposition shows that to understand semialgebraicity of an additive continuous map
$\phi:\Km_2\to\R^k$ it is enough to consider its restriction to the finite-dimensional subspace  $\Pm_{2,m}\subset\Km_2$. Note that the space $\Pm_{2,m}$ is parametrized by $2m$ real numbers, the coordinates of the vertices of polytopes in $\Pm_{2,m}$. It is not hard to see that the area functional $\Vol_2:\Pm_{2,m}\to\R, P\mapsto \Vol_2(P)$ is semialgebraic. Since the mixed area is a composition of Minkowski addition, the area functional, and arithmetic operations (see \re{polarization-2}), it follows that the mixed area functional 
$\V:{\Pm_{2,m}}\times {\Pm_{2,m}} \to\R$, $(P_1,P_2)\mapsto\V(P_1,P_2)$ is also semialgebraic. 
The proofs of these statements, although straightforward, are rather technical, so we put them in Appendix~\ref{appendix}. 



The next theorem is the main result of this section.

\begin{thm}\label{T:semialgebraicity}
For any $n\geq 2$ the closure of the pure mixed volume configuration space $\PMV(n,2)$ is a semialgebraic subset of $\R^{n\choose 2}$.
\end{thm}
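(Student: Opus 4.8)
The plan is to reduce the closure of $\PMV(n,2)$ to the image of a fixed finite-dimensional, explicitly semialgebraic parameter space, and then invoke the Tarski--Seidenberg theorem. Write $W\colon ({\Km_2})^n\to\R^{n\choose 2}$ for the map sending a tuple $K=(K_1,\dots,K_n)$ to its pure mixed area configuration vector $W_K$, so that $\PMV(n,2)=W(({\Km_2})^n)$. Since the mixed area functional is continuous and convex polygons are dense in $\Km_2$ in the Hausdorff metric, $W$ is continuous and $(\Pm_2)^n$ is dense in $({\Km_2})^n$; hence $\overline{\PMV(n,2)}=\overline{W((\Pm_2)^n)}$. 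This is the first reduction: it confines the passage to closures to a single density step and lets us work entirely with polytopes from here on.

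The key step is to bound the number of vertices. Set $m:=3(n-1)$. I claim that $W((\Pm_2)^n)=W((\Pm_{2,m})^n)$, with \emph{exact} equality. To see this, I reduce one body at a time. Fix $K_2,\dots,K_n\in\Pm_2$ and consider the map $\phi_1\colon\Km_2\to\R^{n-1}$, $K_1\mapsto(\V(K_1,K_j))_{2\le j\le n}$; by bilinearity of the mixed area this map is additive, so by the first (non-closure) statement of \rp{cone-generators} applied with $k=n-1$ we have $\phi_1(\Pm_2)=\phi_1(\Pm_{2,m})$. Thus for any polytope $K_1$ there is $K_1'\in\Pm_{2,m}$ reproducing all coordinates $V_{1j}$ exactly; since the remaining coordinates $V_{ij}$ with $i,j\ge 2$ do not involve $K_1$, they are unchanged, and so $W(K_1,K_2,\dots,K_n)=W(K_1',K_2,\dots,K_n)$. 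Repeating this for each index in turn replaces every $K_i$ by a polytope with at most $m$ vertices without altering $W_K$, which proves the claim. Combined with the first reduction this gives $\overline{\PMV(n,2)}=\overline{W((\Pm_{2,m})^n)}$.

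It remains to verify that $W((\Pm_{2,m})^n)$ is semialgebraic. I parametrize $(\Pm_{2,m})^n$ by $\R^{2mn}$, assigning to $2mn$ real coordinates the $n$ polygons obtained as convex hulls of $m$ points each; every element of $\Pm_{2,m}$ arises this way (repeating vertices if fewer than $m$), so the parametrization is surjective. Composing with $W$ yields a map $\Psi\colon\R^{2mn}\to\R^{n\choose 2}$ whose coordinates are the $\binom n2$ mixed areas of the corresponding convex hulls. By the semialgebraicity of the mixed area functional on $\Pm_{2,m}\times\Pm_{2,m}$ (recorded just before this theorem and verified in Appendix~\ref{appendix}), $\Psi$ is a semialgebraic map with semialgebraic domain, so its image $\Psi(\R^{2mn})=W((\Pm_{2,m})^n)$ is semialgebraic by the Tarski--Seidenberg theorem \cite{BCR}. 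Since the closure of a semialgebraic set is again semialgebraic, $\overline{\PMV(n,2)}=\overline{\Psi(\R^{2mn})}$ is semialgebraic, as desired.

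I expect the main obstacle to be the vertex-count reduction: one must carry it out one convex body at a time, exploiting that the mixed area is additive in each argument separately and that \rp{cone-generators} provides exact (rather than merely closure-level) equality for polytopes, so that every replacement preserves $W_K$ on the nose. Keeping the reduction exact is what makes it legitimate to postpone all topological subtleties to the single density step, after which everything is a clean application of Tarski--Seidenberg.
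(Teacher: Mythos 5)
Your proof is correct and follows essentially the same route as the paper's: reduce to polytopes by density and continuity, use the additivity of $K_i\mapsto(\V(K_i,K_j))_{j\neq i}$ together with \rp{cone-generators} to replace each body by one with at most $3(n-1)$ vertices without changing $W_K$, and then conclude via the semialgebraicity of the mixed area on $\Pm_{2,m}\times\Pm_{2,m}$ and the fact that closures of semialgebraic sets are semialgebraic. Your explicit insistence that the vertex-count reduction be carried out one body at a time and be exact (not merely up to closure) is precisely the point the paper's proof relies on as well.
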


\begin{pf}
Let $P=(P_1,\dots, P_n)$ be a tuple of convex polytopes in $\Pm_2$
and $W_P$ the corresponding pure mixed area configuration vector, see \re{pure-config-vector}.
Consider the map $\phi:\Km_2\to\R^{n-1}$ which sends $P_i$ to the $n-1$ mixed areas $(\V(P_i,P_j) \,:\, j\in[n], j\neq i)$.
This map is additive and continuous. By \rp{cone-generators}, there exists $Q_i\in\Pm_{2,m}$ such that
$\phi(K_i)=\phi(Q_i)$, where $m=3(n-1)$. 
Repeating this argument for every $i\in[n]$ we see that there exists $Q=(Q_1,\dots, Q_n)\in{\Pm_{2,m}}^{\!\!\!n}$
such that  $W_P=W_Q$. Let $\psi:{\Km_2}^{\!\!n}\to\R^{n\choose 2}$ be the map defined by $\psi(K)=W_K$, for $K\in{\Km_2}^{\!\!n}$.
We have, thus, shown that $\psi({\Pm_2}^{\!n})=\psi({\Pm_{2,m}}^{\!\!\!n})$.
Similarly to the proof of \rp{cone-generators}, we have $\overline{\psi({\Pm_{2,m}}^{\!\!\!n})}=\overline{\psi({\Km_2}^{\!\!n})}=\overline{\PMV(n,2)}$.
Since the closure of a semialgebraic set is semialgebraic, it remains to check the semialgebraicity of $\psi({\Pm_{2,m}}^{\!\!\!n})$, which
 follows immediately from the semialgebraicity of the mixed area, see \rp{semialg-technical}.
\end{pf}


\begin{thm}\label{T:PMV-8-2} 
For any $n\geq 4$ we have
$$\mathcal{PMV}(n,2)\subseteq \Pl_n:=\left\{v\in\R^{n \choose2} \,:\,  
v_{ij}v_{kl}\leq v_{ik}v_{jl}+v_{il}v_{jk}\ \text{\rm for }\{i,j\}\sqcup\{k,l\}\subseteq[n] \right\}.$$
Moreover, for $n\geq 8$ the above inclusion is proper.
\end{thm}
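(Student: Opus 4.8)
The plan is to prove the two assertions separately, the containment being essentially a corollary of the four-set case and the properness requiring a genuine separating construction.

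For the containment $\PMV(n,2)\subseteq\Pl_n$ I would use exactly the reduction of \rs{PTE}. Each Plücker-type inequality indexed by $\{i,j\}\sqcup\{k,l\}\subseteq[n]$ involves only the six mixed areas among the four sets $K_i,K_j,K_k,K_l$. Given a realizing tuple $K\in{\Km_2}^{\!n}$, restrict it to these four sets; the resulting sub-vector lies in $\PMV(4,2)=\Pl_4$ by \rt{PMV-4-2} (concretely, by \rp{segment} and \rp{triangles} after the approximation and Minkowski-decomposition reduction), so the single inequality holds. Running over all quadruples gives $W_K\in\Pl_n$.

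For properness it suffices to treat $n=8$. Indeed, restricting a realizing tuple to any $8$-element subset $S\subseteq[n]$ shows that the coordinate projection $\pi_S$ maps $\PMV(n,2)$ into $\PMV(8,2)$. Hence, if one exhibits a point $v^{*}\in\Pl_8\setminus\PMV(8,2)$, its image under the iterated embedding \re{embedding} (padding by zeros) is a point $v\in\Pl_n$ with $\pi_{S}(v)=v^{*}$ for $S=\{1,\dots,8\}$; were $v$ in $\PMV(n,2)$, then $v^{*}=\pi_{S}(v)$ would lie in $\PMV(8,2)$, a contradiction. So the entire problem reduces to producing one separating point for $n=8$.

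To build $v^{*}$, the plan is to exploit a boundary phenomenon already visible in \rp{symmetric}: when four sets are segments with direction vectors ordered so that $\det(v_i,v_j)\ge 0$, the Plücker relation \re{PR} forces \emph{exactly one} of the three Plücker-type inequalities on that quadruple to be an equality, the one dictated by the cyclic order of the four directions. Thus a point in which many four-term inequalities are simultaneously tight pins the $V_{ij}$ down to determinantal (segment-like) behaviour on those quadruples, and the tight inequalities record a choice of cyclic order on each quadruple. The crucial observation is that such local orders must all be induced by one cyclic order of eight directions on the circle, a rank-two consistency condition, whereas the Plücker-type inequalities alone impose no global compatibility. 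I would therefore select $v^{*}\in\Pl_8$ whose tight-inequality pattern is \emph{cyclically inconsistent}, then verify (i) that $v^{*}$ satisfies all $3\binom{8}{4}$ Plücker-type inequalities (a finite check, organizable via the graph representation of \rs{graphs}), and (ii) that no tuple realizes it, using the reduction to segment and triangle summands together with \rl{width}, \rl{inscribed}, and the equality analysis behind \rp{segment} and \rp{triangles}.

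The main obstacle is step (ii): equality in a single Plücker-type inequality does not, for arbitrary convex bodies, force the four sets to be segments, so the delicate point is to show that on the chosen stratum the admissible degenerations are rigid enough to transmit the contradictory cyclic order. A possibly cleaner alternative, leading to the same conclusion, is to first isolate a new valid inequality $F(v)\ge 0$ holding for every mixed-area vector in $\PMV(8,2)$ but not implied by the Plücker-type inequalities, and then take any $v^{*}\in\Pl_8$ with $F(v^{*})<0$; the work then shifts to discovering and proving $F$, which one would search for among the multilinear (Minkowski-additive in each $K_i$) combinations of the $V_{ij}$, these being the only combinations of the right homogeneity type guaranteed not to introduce repeated-set mixed volumes.
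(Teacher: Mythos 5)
Your containment argument and your reduction of properness to the single case $n=8$ are both correct and agree with the paper: the inclusion $\PMV(n,2)\subseteq\Pl_n$ follows by restricting to quadruples and invoking \rt{PMV-4-2}, and the padding/projection argument transfers a separating point from $n=8$ to all $n\geq 8$.

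The gap is in the properness part: you never actually produce a point of $\Pl_8\setminus\PMV(8,2)$, and the obstacle you yourself flag --- that equality in a Pl\"ucker-type inequality does not force the four bodies to be segments --- is precisely what blocks your plan as stated, so the argument remains a programme rather than a proof. The paper sidesteps this by using \emph{vanishing} rather than \emph{tight} coordinates. It considers the linear slice $X=\{v\in\R^{8\choose 2}\,:\, v_{i,i+4}=0,\ v_{i,l}=v_{i+4,l}\ \text{for } i\in[4],\ l\in[8]\setminus\{i,i+4\}\}$. The condition $\V(K_i,K_{i+4})=0$ genuinely is rigid: it forces $K_i$ and $K_{i+4}$ to be parallel segments (or points), and the equalities $\V(K_i,K_l)=\V(K_{i+4},K_l)$ then force equal lengths. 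Hence $X\cap\PMV(8,2)$ is identified with the set of mixed-area vectors of four segments, on which the Pl\"ucker \emph{relation} \re{PR} holds as an equation, so this locus has dimension $5$; whereas $X\cap\Pl_8$ is identified with $\Pl_4$, which is $6$-dimensional. The dimension discrepancy gives properness without exhibiting or certifying any individual separating point, which is exactly the verification your step (ii) could not complete. Your heuristic --- that the quadruple-wise inequalities cannot detect the global consistency of a single circular order of eight directions --- is the right intuition for \emph{why} a gap should exist, but to turn it into a proof you need a mechanism that forces segment behaviour on the $\PMV$ side, and vanishing mixed areas provide that rigidity where tight inequalities do not. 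The alternative route you sketch (finding a new multilinear valid inequality $F$) is also left entirely open.
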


\begin{proof}
The inclusion follows from the proof of \rt{PMV-4-2}. Suppose $n=8$ and consider the subset $X\subset \R^{8\choose 2}$
given by 
$$X=\left\{v\in \R^{8\choose 2} \,:\, v_{i, i+4}=0, v_{i, l}=v_{i+4, l}, \text{ for }i\in[4],\ l\in[8]\setminus\{i,i+4\}\right\}.$$
We claim that $\dim\left(X\cap\Pl_8\right)=6$ while $\dim\left(X\cap\PMV(8,2)\right)=5$, hence,
$\PMV(8,2)\subsetneq \Pl_8$. Indeed, the former is clear since $X\cap\Pl_8$ can be identified
with $\Pl_4$ (each coordinate of $v\in X$ is either zero or equals $v_{ij}$ for
some $\{i,j\}\subset[4]$; and the Pl\"ucker-type inequalities restricted to $X$ are either trivial or coincide with
the ones for $\Pl_4$). For the latter, note that
the conditions on the mixed areas $\V(K_i, K_{i+4})=0$
imply that $K_i$ and $K_{i+4}$ are parallel segments and the conditions $\V(K_i, K_l)=\V(K_{i+4}, K_l)$
ensure that $K_i$ and $K_{i+4}$ have the same lengths. Thus, $X\cap\PMV(8,2)$
can be identified with the subset
$$Y=\left\{\V(K_i,K_j)\in \PMV(4,2)\,:\,K_1,\dots, K_4 \text{ are segments}\right\}.$$
But in this case $\V(K_i,K_j)$ 
satisfy the Pl\"ucker relation \re{PR} and, therefore, 
$$\dim\left(X\cap\PMV(8,2)\right)=\dim Y=5.$$

For $n\geq 8$, let $\pi:\R^{n\choose 2}\to\R^{8\choose 2}$ be the projection given by $\pi(v)=(v_{ij}\,:\,\{i,j\}\subset[8])$.
Clearly, $\pi(\PMV(n,2))=\PMV(8,2)$. Also, $\pi(\Pl_n)=\Pl_8$, as $\Pl_8$ can embedded into $\Pl_n$ using the 
map \re{embedding} iterated $n-8$ times. Since $\PMV(8,2)\neq \Pl_8$, it follows that $\PMV(n,2)\neq \Pl_n$ as well.
\end{proof}

\section{Dimensions of ${\MV(n,2)}$ and ${\PMV(n,2)}$}\label{S:dim}

In this section we show that $\MV(n,2)\subset\R^{n+1\choose 2}$ has a nonempty interior and, hence, 
has full topological dimension. Since  $\PMV(n,2)$ is the projection of $\MV(n,2)$ onto ${n\choose 2}$ coordinates, it is full-dimensional as well.

\begin{thm}\label{T:dim} For any $n\geq 2$ the topological dimension of $\MV(n,2)$ equals ${n+1\choose 2}$. Consequently, 
the topological dimension of $\PMV(n,2)$ equals ${n\choose 2}$.
\end{thm}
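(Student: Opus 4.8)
The plan is to show that $\MV(n,2)$ has nonempty interior in $\R^{\binom{n+1}{2}}$, which forces its topological dimension to equal $\binom{n+1}{2}$. The statement about $\PMV(n,2)$ then follows immediately: as noted after Definition~\ref{D:pure}, $\PMV(n,2)$ is the image of $\MV(n,2)$ under the coordinate projection $\R^{\binom{n+1}{2}}\to\R^{\binom n2}$ that forgets the $n$ self-areas $\V(K_i,K_i)$, and a coordinate projection sends open sets to open sets; hence $\PMV(n,2)$ contains a nonempty open subset of $\R^{\binom n2}$ and is full-dimensional. To produce an interior point of $\MV(n,2)$ I would exhibit a smooth finite-dimensional family of tuples $K=(K_1,\dots,K_n)$ for which the map from the family parameters to the configuration vector $\V_K$ is a submersion at one point; the inverse function theorem then places an open set inside $\MV(n,2)$.

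Concretely, I would work with smooth strictly convex bodies through their support functions on the circle, using the planar identity $\V(K,L)=\tfrac12\int_0^{2\pi} h_K(\theta)\bigl(h_L(\theta)+h_L''(\theta)\bigr)\,d\theta$. Writing $\rho_i:=h_i+h_i''$ for the (positive) curvature density of $K_i$ and perturbing each support function as $h_i\mapsto h_i+t\,\delta_i$, the derivatives of the mixed areas are
\[
\frac{d}{dt}\Big|_{0}\V(K_i,K_j)=\tfrac12\!\int \delta_i\,\rho_j\,d\theta+\tfrac12\!\int \rho_i\,\delta_j\,d\theta \ \ (i\neq j),\qquad \frac{d}{dt}\Big|_{0}\Vol_2(K_i)=\int \delta_i\,\rho_i\,d\theta.
\]
Thus the differential of the parameter-to-configuration map is the linear map $T$ sending $(\delta_1,\dots,\delta_n)$ to the vector of all these entries. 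As baseline I would take each $K_i$ to be a small perturbation of the unit disk, say $h_i=1+\varepsilon\cos((i+1)\theta)$ for small $\varepsilon>0$, so that every $\rho_i$ is a positive density supported on Fourier modes $0$ and $\pm(i+1)$; the perturbations $\delta_i$ would range over trigonometric polynomials with modes in $\{0,2,3,\dots,n+1\}$, excluding mode $1$, which corresponds to translations and lies in the kernel of $T$. For small $t$ each $h_i+h_i''$ stays positive, so all perturbed bodies are genuinely convex and the family is legitimate.

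To prove $T$ surjective I would show its transpose is injective. Writing $y_{ij}=y_{ji}$ for the entry indexed by $\{i,j\}$, a vector $y=(y_{ij})_{i\le j}$ annihilating the image of $T$ forces, after collecting the coefficient of each $\delta_i$, the signed density $\mu_i:=y_{ii}\rho_i+\tfrac12\sum_{j\neq i}y_{ij}\rho_j$ to integrate to zero against every admissible $\delta_i$; since the chosen $\delta_i$ exhaust all Fourier modes actually occurring in $\mu_i$, this gives $\mu_i\equiv 0$ for each $i$. By the choice of baseline the functions $\rho_1,\dots,\rho_n$ are linearly independent (their nonconstant modes are distinct), so $\mu_i\equiv 0$ forces $y_{ii}=0$ and $y_{ij}=0$ for all $j\neq i$; letting $i$ range over $[n]$ yields $y=0$. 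Hence $T$ is onto, the parameter map is a submersion at the baseline, and $\MV(n,2)$ contains an open set. The main subtlety — and the reason a naive parameter count is not decisive — is the coupling of the off-diagonal coordinates: each pure mixed area $\V(K_i,K_j)$ depends on two bodies simultaneously, so surjectivity cannot be read off directly. The transpose computation is precisely what decouples the problem index by index and reduces full-dimensionality to the clean linear-independence condition on the curvature densities $\rho_i$.
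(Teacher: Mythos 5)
Your proposal is correct, and it proves the same statement by the same overall strategy as the paper (exhibit a finite-dimensional smooth family on which the parameter-to-configuration map is a submersion, then project to get $\PMV(n,2)$), but the family and the surjectivity argument are genuinely different. The paper works entirely inside the cone of zonotopes: it fixes segments $I_i=[0,e_1+ie_2]$, parametrizes tuples by a matrix $A$ via $K=IA$, exploits the identity $f_{IA}(x)=f_I(Ax)$ to compute the Fr\'echet derivative as $H\mapsto\langle\nabla f_I(Ax),Hx\rangle$, and reduces surjectivity to the invertibility of the Toeplitz matrix $T=(|i-j|)$ together with the surjectivity of $H\mapsto\langle x,Hx\rangle$ onto quadratic forms. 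You instead work with smooth strictly convex bodies through their support functions, use the integral formula $\V(K,L)=\tfrac12\int h_K\,(h_L+h_L'')\,d\theta$, and establish surjectivity by showing the transpose of the differential is injective via a Fourier-mode separation (distinct nonconstant modes of the curvature densities $\rho_i$). The details check out: the derivative formulas are right (the factor in $\tfrac{d}{dt}\Vol_2(K_i)=\int\delta_i\rho_i$ comes from integrating $h_i\delta_i''$ by parts), the positivity of $\rho_i$ for small $\varepsilon$ and small $t$ keeps the family inside $\Km_2$, the admissible modes $\{0,2,\dots,n+1\}$ exhaust the modes of each $\mu_i$, and the linear independence of $\rho_1,\dots,\rho_n$ kills all components of an annihilating covector. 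What the paper's route buys is elementariness --- no regularity theory for support functions, just one matrix identity and a known invertibility fact --- and it stays within the polytopal setting used throughout the rest of the paper; what your route buys is a transparent duality argument (the transpose computation decouples the coupled off-diagonal coordinates index by index) that would adapt readily to other baselines or to showing surjectivity onto prescribed coordinate subspaces. Either argument is acceptable.
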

\begin{proof}  Let  $\R[x_1,\dots,x_n]_2$ be the space of quadratic forms in $n$-variables with real coefficients.
Consider the map $\V:{\Km_2}^{\!\!n}\to\R[x_1,\dots,x_n]_2$ which sends an $n$-tuple
$K=(K_1,\dots,K_n)$ to the polynomial $f_K=\Vol_2(x_1K_1+\dots+x_nK_n)$. 

As we mentioned in the introduction,
all mixed areas $\V(K_i,K_i)$ for $1\leq i\leq j\leq n$ appear in
the coefficients of $f_K$:
\begin{equation}\label{e:quadratic-form}
f_K(x)=\sum_{i=1}^n\Vol_2(K_i)x_i^2\,+\,2\!\!\!\sum_{1\leq i<j\leq n}\!\!\V(K_i,K_j)x_ix_j.
\end{equation}
The idea of the proof is to consider a finite-dimensional subspace $\cL\subset{\Km_2}^{\!\!n}$ such that
the restriction $\V:\cL\to\R[x_1,\dots,x_n]_2$ is locally surjective. 

We start with the $n$-tuple of segments $I=(I_1,\dots, I_n)\in{\Km_2}^{\!\!n}$, where $I_i=[0,e_1+ie_2]$.
Let $\cL$ be the space of $n$-tuples of polytopes which are Minkowski sums of the $I_i$ dilated by some non-negative factors:
$$\cL=\left\{\left(a_{11}I_1+\dots+a_{n1}I_n,\ \ldots, \ a_{1n}I_1+\dots+a_{nn}I_n\right)\,:\,a_{ij}\in\R_{\geq 0}\right\}.$$
It is convenient to use matrix multiplication notation $I A$ for the $n$-tuple above, where $A\in\M(n,\R_{\geq 0})$. Similarly, we denote
$Kx:=x_1K_1+\dots+x_nK_n$.  Then our goal is to show that the map $\V$ which sends $IA$ to $f_{IA}\in\R[x_1,\dots,x_n]_2$ is locally surjective.

Fix a matrix $A\in\GL(n,\R_{>0})$. Note that $f_{IA}(x)=\Vol_2(IAx)=f_{I}(Ax)$. Thus, for any $H\in\M(n,\R)$ we can write using Taylor expansion
$$f_{I}((A+H)x)=f_I(Ax)+\langle \nabla f_I(Ax),Hx\rangle+O(\|Hx\|^2).$$
This shows that the Frech\'et derivative of $\V$ at point $IA$ is the linear map $L_A:H\mapsto \langle \nabla f_I(Ax),Hx\rangle$.
To see that $\V$ is locally surjective near $IA$ we need to check that $L_A$ is surjective.

Indeed, by \re{quadratic-form}, 
$$f_I(x)=2\!\!\!\sum_{1\leq i<j\leq n}\!\!(j-i)x_ix_j=x^{\rm T}Tx,$$ where $T=(|i-j|)_{i,j=1}^n\in\GL(n,\R)$ is a Toeplitz matrix.
Hence, $\nabla f_I(x)=2Tx$ and we can write $L_A(H)=2\langle TAx,Hx\rangle$. Since $TA\in\GL(n,\R)$ we can apply a change
of variables $y=TAx$ so that $L_A(HTA)=2\langle TAx,HTAx\rangle=2\langle y,Hy\rangle$. This shows that in appropriate bases for
$\M(n,\R)$ and $\R[x_1,\dots,x_n]_2$,  the map $L_A$ is simply $H\mapsto 2\langle x,Hx\rangle$, which is clearly surjective.
\end{proof}


%

\section{Application to Algebraic and Tropical Geometry}\label{S:applications}

In this section we discuss direct consequences of our results in the intersection theory of algebraic curves and tropical curves. For all necessary definitions in toric and tropical geometry and interconnection between the two areas 
we refer the reader to introductory articles \cite{RGST05, BB13, BK14} or popular textbooks \cite{Sturm02, MaclaganSturmfels15}.

Prior to formulating the consequences, we shortly introduce the basic notions of tropical geometry. 
The tropical semiring is the set $\T := \R  \cup \{+\infty\}$ endowed with the tropical addition $\oplus := \min$ and the tropical multiplication $\odot := +$, where $+\infty$ is neutral with respect to the tropical addition and thus is the tropical zero. A {\it tropical Laurent polynomial}
is an expression of the form $$f:=\bigoplus_{(a,b) \in S} c_{a,b} \odot x^a \odot y^b,$$
where $S \subseteq \Z^2$ is a finite set and $c_{a,b} \in \R$. The convex hull $\conv(S)\subset\R^2$
is called the Newton polytope of $f$ and is denoted by $P_f$. 

 In the classical arithmetic operations over reals,  $f$ is expressed as the piecewise-linear function
\begin{align} \label{trop:pol:as:min}
	f(x,y) = \min \setcond{ c_{a,b} + a x + b y }{(a,b) \in S}. 
\end{align} 
We define the {\it tropical curve} $V(f) \subset \R^2$  associated to $f$ as the set of all points $(x,y)$, in which  the piecewise linear function $f$ is not differentiable, that is where the minimum in \eqref{trop:pol:as:min} is attained at least twice.
 The set $V(f)$  has the structure of a  one-dimensional polyhedral complex; it has vertices and edges.  The polyhedral complex $V(f)$ is dual to the regular polyhedral subdivision of $P_f$ obtained by lifting $(a,b)$ to height $c_{a,b}$. This subdivision induces weights on vertices and edges of $V(f)$. The weight $\omega(v)$ of a vertex $v$ in $V(f)$ is the normalized area of the respective two-dimensional cell in the regular subdivision. The weight $\omega(e)$ of an edge $e$ in $V(f)$ is the lattice length of the respective edge in the regular subdivision. 

For tropical bivariate polynomials $f$ and $g$, the tropical curves $V(f)$ and $V(g)$ are said to intersect transversally if $V(f) \cap V(g)$ is a finite set and if each intersection point $p \in V(f) \cap V(g)$ lies in the relative interior of an edge $e$ of $V(f)$ and an edge $h$ of $V(g)$. The {\it multiplicity} of  a transversal intersection point $p$ is defined to be  $\omega(e) \omega(h) |\det(u,v)|$, where $u$ and $v$ are primitive vectors in the direction of $e$ and $h$, respectively. The total number of intersection points, counting multiplicities, is called the {\it intersection number} of $V(f)$ and $V(g)$ and is denoted $I(V(f), V(g))$.

The following result connects the intersection number of tropical curves and the mixed area. 

\begin{thm}[Tropical BKK theorem in the plane] \label{T:trop:bkk}
	Let $f, g$ be bivariate tropical polynomials, for which the tropical curves $V(f)$ and $V(g)$ intersect transversally. Then the intersection number
	$I(V(f), V(g))$ is equal to $2 \V(P_f,P_g)$. 
\end{thm}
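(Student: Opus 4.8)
The plan is to deduce the identity from the duality between tropical curves and regular subdivisions, by passing from the pair $f,g$ to the single tropical product $f\odot g=f+g$. Its Newton polytope is $P_{f\odot g}=P_f+P_g$, its tropical curve is the set-theoretic union $V(f)\cup V(g)$, and the regular subdivision of $P_f+P_g$ it induces is the \emph{mixed subdivision} assembled from the subdivisions dual to $V(f)$ and $V(g)$. Using the polarization formula \re{polarization-2}, the claim $I(V(f),V(g))=2\V(P_f,P_g)$ is equivalent to
\[
I(V(f),V(g))=\Vol_2(P_f+P_g)-\Vol_2(P_f)-\Vol_2(P_g),
\]
so it suffices to read both sides off this mixed subdivision.

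First I would recall its structure (see \cite{MaclaganSturmfels15}): every maximal cell is a Minkowski sum $\sigma+\tau$ with $\sigma$ a cell dual to $V(f)$, $\tau$ a cell dual to $V(g)$, and $\dim\sigma+\dim\tau=2$. These fall into three types: (i) $\sigma$ a two-cell and $\tau$ a vertex; (ii) $\sigma$ a vertex and $\tau$ a two-cell; (iii) \emph{mixed cells} $\delta+\delta'$ with $\delta,\delta'$ edges. The two-cells appearing in type (i) are, up to the translation by the $\tau$-vertex, exactly the two-cells of the subdivision of $P_f$, and likewise for type (ii); hence their total areas are $\Vol_2(P_f)$ and $\Vol_2(P_g)$. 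Subtracting these from $\Vol_2(P_f+P_g)$ shows that the right-hand side above equals the total area of the mixed cells, so it remains to match this with $I(V(f),V(g))$.

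Next I would set up the dictionary between mixed cells and transversal intersection points. Each intersection point $p$ lies in the relative interior of an edge $e$ of $V(f)$ and an edge $h$ of $V(g)$, and it corresponds to the mixed cell $\delta+\delta'$, where $\delta$ is the subdivision edge dual to $e$ and $\delta'$ the one dual to $h$; transversality forces $\delta\not\parallel\delta'$, so this cell is a genuine parallelogram. The local computation identifying its area with the multiplicity of $p$ uses two structural facts from the definition of $V(\cdot)$: a tropical edge is orthogonal to its dual edge, and the edge weight equals the lattice length of that dual edge. Writing $u,v$ for primitive directions of $e,h$ and $R$ for the rotation by $90^\circ$ (which preserves $\Z^2$ and has determinant $1$), we get $\delta=\pm\omega(e)\,Ru$ and $\delta'=\pm\omega(h)\,Rv$, hence
\[
\Vol_2(\delta+\delta')=\bigl|\det(\delta,\delta')\bigr|=\omega(e)\,\omega(h)\,\bigl|\det(Ru,Rv)\bigr|=\omega(e)\,\omega(h)\,|\det(u,v)|,
\]
which is exactly the multiplicity of $p$. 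Summing over all mixed cells then equates their total area with the sum of the multiplicities, i.e.\ with $I(V(f),V(g))$, closing the chain.

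The step I expect to require the most care is making this dictionary a rigorous bijection: I must argue that every transversal intersection point (including those on unbounded rays) is dual to exactly one two-dimensional mixed cell and conversely, with no intersection concealed at a vertex of either curve. I would handle this by applying the standard tropical duality recorded before the theorem to $V(f\odot g)=V(f)\cup V(g)$, observing that the transversality hypothesis is precisely the statement that each $f$-edge meets each $g$-edge only in relative interiors and in non-parallel directions, so that the crossings are in bijection with the two-dimensional mixed cells. The remaining bookkeeping — that the type-(i) and type-(ii) two-cells account for $P_f$ and $P_g$ — is the planar instance of the classical relation between fine mixed subdivisions and mixed volumes, which I would cite rather than reprove.
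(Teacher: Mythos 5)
Your argument is correct, and there is nothing in the paper to compare it against: the authors state Theorem~\ref{T:trop:bkk} as a known result from tropical intersection theory (pointing to \cite{RGST05, MaclaganSturmfels15}) and give no proof of their own. What you have written is the standard proof of the tropical Bernstein theorem: pass to $f\odot g$, whose dual subdivision of $P_f+P_g$ is the mixed subdivision; peel off the unmixed two-cells, which account for $\Vol_2(P_f)$ and $\Vol_2(P_g)$; and identify the area of each parallelogram cell $\delta+\delta'$ with the local multiplicity $\omega(e)\,\omega(h)\,|\det(u,v)|$ via the orthogonality and lattice-length parts of the duality. Your flagged worry about the bijection is handled exactly as you propose: a two-dimensional cell $\delta+\delta'$ with $\delta,\delta'$ edges occurs precisely at the points $p$ where $f$ attains its minimum on $\delta$ and $g$ on $\delta'$, i.e.\ at $\mathrm{relint}(e)\cap\mathrm{relint}(h)$, and since transversality rules out parallel overlapping edges (two parallel edges meeting in relative interiors would meet in a segment, contradicting finiteness) this locus is a single point; conversely no vertex of either curve can lie on the other, so no two-cell of $S_f$ or $S_g$ is lost or doubled in the unmixed count. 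The only cosmetic caveat is your formula $\delta=\pm\omega(e)Ru$: strictly, $Ru$ is the primitive direction of $\delta$ and $\omega(e)$ its lattice length by definition, so the displayed determinant computation is immediate; I would state it that way rather than as a derived identity.
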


We now can consider arrangements of $n$ tropical curves $C_1,\ldots,C_n$ whose pairwise intersections are transversal. We call such an arrangement transversal. For $\{i,j\}\subset[n]$,  let $I(C_i,C_j)$ be the intersection number of $C_i$ and  $C_j$. In this way, 
with the arrangement $C = (C_1,\ldots, C_n)$ we associate the configuration vector 
\[
	I_C := \Bigl( I(C_i,C_j) \,:\,  \{i,j\}\subset[n]\Bigr) \in \Z_{\ge 0}^{\binom{n}{2}}
\] 
of the intersection numbers of $C$. Analogously to $\PMV(n,2)$, we can consider the set of all such configuration vectors over all transversal arrangements of $n$ tropical curves: 
\[
	\cI(n,2) := \setcond{ I_C }{ C  \ \text{transversal arrangement of $n$ planar tropical curves} }.
\]

Note that finding a description of $\cI(n,2)$ is a more intricate task than describing $\PMV(n,2)$. Because of the discrete nature of $\cI(n,2)$, it is expected that number-theoretic aspects would be involved when describing $\cI(n,2)$. Nevertheless, $\PMV(n,2)$ is intimately related to $\cI(n,2)$ in the following sense.

We say that a subset $H\subset\R^d$ is {\it positively homogeneous} if $x\in H$ implies $\lambda x\in H$
for all $\lambda\geq 0$. Clearly, for any $X\subset\R^d$ the set
$$\R_{\geq 0}\,X=\{\lambda x\,:\, \lambda\in\R_{\geq 0},\, x\in X\}$$
is the minimal, with respect to inclusion, positively homogeneous set containing $X$. We have the following proposition.

\begin{prop}\label{P:tropical}
The inclusion-minimal positively homogeneous closed set containing $\cI(n,2)$ coincides with the topological closure of $\PMV(n,2)$. In other words,
$$\overline{\R_{\geq 0}\,\cI(n,2)}=\overline{\PMV(n,2)}.$$
\end{prop}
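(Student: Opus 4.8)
The plan is to establish the two inclusions $\overline{\R_{\geq 0}\,\cI(n,2)}\subseteq\overline{\PMV(n,2)}$ and $\overline{\PMV(n,2)}\subseteq\overline{\R_{\geq 0}\,\cI(n,2)}$ separately, using \rt{trop:bkk} as the bridge between intersection numbers and mixed areas. First I would record that $\PMV(n,2)$ is itself positively homogeneous: rescaling every set $K_i$ by $\sqrt{\lambda}$ multiplies each mixed area $\V(K_i,K_j)$ by $\lambda$, so $\lambda\,W_K=W_{(\sqrt\lambda K_1,\dots,\sqrt\lambda K_n)}\in\PMV(n,2)$, while the origin is attained by a tuple of points. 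Consequently $\R_{\geq 0}\,\PMV(n,2)=\PMV(n,2)$, and in particular $2\,\PMV(n,2)=\PMV(n,2)$.

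For the first inclusion, let $C=(C_1,\dots,C_n)$ be a transversal arrangement with $C_i=V(f_i)$. By \rt{trop:bkk}, $I(C_i,C_j)=2\V(P_{f_i},P_{f_j})$, so $I_C=2\,W_P$ where $P=(P_{f_1},\dots,P_{f_n})$ is the tuple of Newton polytopes. Since Newton polytopes belong to $\Km_2$, this gives $\cI(n,2)\subseteq 2\,\PMV(n,2)=\PMV(n,2)$, and by homogeneity $\R_{\geq 0}\,\cI(n,2)\subseteq\PMV(n,2)$; taking closures yields the inclusion.

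The content lies in the reverse inclusion. Given $W_K\in\PMV(n,2)$, I would first approximate each $K_i$ in the Hausdorff metric by a polytope with rational vertices; continuity of the mixed area shows that the resulting vectors $W_P$ converge to $W_K$. For a fixed rational-vertex tuple $P=(P_1,\dots,P_n)$, choose a common integer $N$ for which every $N P_i$ is a lattice polytope. I would then realize $2\,W_{NP}$ as an intersection-number vector: for each $i$ take a tropical Laurent polynomial $f_i$ with Newton polytope $P_{f_i}=N P_i$, and translate each tropical curve $V(f_i)$ by a vector $w^{(i)}\in\R^2$ — which amounts to the substitution $c_{a,b}\mapsto c_{a,b}-\langle (a,b),w^{(i)}\rangle$ and leaves the Newton polytope unchanged. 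For a generic choice of $(w^{(1)},\dots,w^{(n)})$ all $\binom{n}{2}$ pairs of curves intersect transversally at once, and then \rt{trop:bkk} gives $I_C=2\,W_{NP}=2N^2\,W_P\in\cI(n,2)$. Dividing by $2N^2$ places $W_P\in\R_{\geq 0}\,\cI(n,2)$. Thus every $W_P$ coming from rational-vertex polytopes lies in $\R_{\geq 0}\,\cI(n,2)$; since such tuples are Hausdorff-dense and $\V$ is continuous, passing to the closure puts all of $\PMV(n,2)$ into $\overline{\R_{\geq 0}\,\cI(n,2)}$.

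The main obstacle is the simultaneous transversality step: I must argue that the set of translation tuples $(w^{(1)},\dots,w^{(n)})$ for which some pair $V(f_i),V(f_j)$ fails to meet transversally is nowhere dense (indeed of measure zero) in $(\R^2)^n$. For a single pair this is the standard fact that two one-dimensional polyhedral complexes are in general position after a generic relative translation $w^{(i)}-w^{(j)}$: non-finiteness of the intersection or an intersection hitting a vertex is cut out by finitely many lower-dimensional linear conditions on the finitely many edge directions and vertex positions. A finite union of such bad sets over all pairs is again nowhere dense, so a generic tuple works. A minor point to verify is that degenerate Newton polytopes (segments or single points, arising from segment- or point-valued $K_i$) cause no trouble: a point yields the empty curve and a zero intersection number, matching $\V(\{\mathrm{pt}\},\cdot)=0$, and the transversality bookkeeping goes through unchanged.
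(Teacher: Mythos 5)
Your proof is correct and follows essentially the same route as the paper: the forward inclusion via the tropical BKK theorem together with positive homogeneity of $\PMV(n,2)$, and the reverse inclusion by Hausdorff-density of rational polytopes, rescaling to lattice polytopes, and realizing them as Newton polytopes of a transversal tropical arrangement. The only difference is that you spell out the generic-translation argument for simultaneous transversality, which the paper leaves implicit when it asserts that $\cI(n,2)$ consists of all vectors $2W_P$ for lattice polytope tuples $P$.
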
 
\begin{proof} 
	By \rt{trop:bkk}, 
	\[
		\cI(n,2) = \setcond{ 2\V_P }{P = (P_1,\ldots,P_n) \ \text{is an $n$-tuple of planar lattice polytopes}},
	\] 
	where the polytope can be a polygon, a segment, or a point.
	This implies $\cI(n,2) \subseteq \PMV(n,2)$ and so $\overline{\R_{\geq 0}\,\cI(n,2)}\subseteq\overline{\PMV(n,2)}$.
	
	Conversely, note that $\R_{\geq 0}\,\cI(n,2)$ contains the set
	\[
			\setcond{ \V_P }{P = (P_1,\ldots,P_n) \ \text{is an $n$-tuple of planar rational polytopes}}.
	\]
	Since the set of all planar rational polytopes is dense in the set of all nonempty planar compact convex sets and since the mixed area is continuous  in each argument with respect to the Hausdorff metric, we see
	that $\overline{\PMV(n,2)}\subseteq \overline{\R_{\geq 0}\,\cI(n,2)}$.
	
\end{proof} 

\begin{cor} \label{C:I:4-2} The Pl\"ucker space
	\[
		\Pl_4=\left\{v\in\R_{\geq 0}^{4\choose 2}  \,: \, 
		v_{ij}v_{kl}\leq v_{ik}v_{jl}+v_{il}v_{jk}\ \text{\rm for }\{i,j\}\sqcup\{k,l\}=[4] \right\}
	\]
	is the inclusion-minimal positively homogeneous closed set containing $\cI(4,2)$.
	\end{cor}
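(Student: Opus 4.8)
The plan is to assemble the statement directly from \rt{PMV-4-2} and \rp{tropical}, the only new ingredient being a short topological observation about inclusion-minimal positively homogeneous closed sets. First I would establish the general fact that for any set $X\subseteq\R^d$ the inclusion-minimal positively homogeneous closed set containing $X$ is precisely $\overline{\R_{\geq 0}\,X}$. The excerpt already records that $\R_{\geq 0}\,X$ is the minimal positively homogeneous set containing $X$, so it remains only to pass to closures. On the one hand, $\overline{\R_{\geq 0}\,X}$ is closed, contains $X$, and is positively homogeneous: if $x_k\in\R_{\geq 0}\,X$ with $x_k\to x$ and $\lambda\geq 0$, then $\lambda x_k\in\R_{\geq 0}\,X$ and $\lambda x_k\to\lambda x$, so $\lambda x\in\overline{\R_{\geq 0}\,X}$. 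On the other hand, any positively homogeneous closed set $H\supseteq X$ must contain $\R_{\geq 0}\,X$ by minimality, and being closed it must then contain $\overline{\R_{\geq 0}\,X}$. This identifies the minimal such set.

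Applying this with $X=\cI(4,2)$, the inclusion-minimal positively homogeneous closed set containing $\cI(4,2)$ equals $\overline{\R_{\geq 0}\,\cI(4,2)}$. By \rp{tropical} applied with $n=4$ this coincides with $\overline{\PMV(4,2)}$, and by \rt{PMV-4-2} we have $\PMV(4,2)=\Pl_4$. It therefore remains only to observe that $\Pl_4$ is already closed, being a basic closed semialgebraic set cut out by the three non-strict Pl\"ucker-type inequalities together with the non-negativity constraints $v_{ij}\geq 0$. Hence $\overline{\Pl_4}=\Pl_4$, and chaining the equalities yields $\overline{\R_{\geq 0}\,\cI(4,2)}=\Pl_4$, which is exactly the claim.

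There is no substantive obstacle in this argument: the corollary is an immediate consequence of the two quoted results once the elementary characterization of the minimal positively homogeneous closed set is in place. The only point requiring a moment of care is the verification that taking closures preserves positive homogeneity, which is precisely the short sequential argument given above, together with the remark that $\Pl_4$, being defined by non-strict polynomial inequalities, is closed and hence equal to its own closure.
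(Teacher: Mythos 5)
Your argument is correct and matches the paper's proof, which likewise derives the corollary directly from \rt{PMV-4-2} and \rp{tropical}; you have merely spelled out the routine verifications (that the minimal positively homogeneous closed set containing $X$ is $\overline{\R_{\geq 0}\,X}$, and that $\Pl_4$ is closed) which the paper leaves implicit.
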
 
\begin{proof} 
	This is a direct corollary of \rt{PMV-4-2} and \rp{tropical}. 
\end{proof} 
One can interpret this corollary as follows: The closure of the subset of $\R_{\geq 0}^6$ described by all (infinitely many) homogeneous polynomial inequalities valid for the six intersection numbers of any quadruple of tropical curves, is a semialgebraic set determined by the three Pl\"ucker type inequalities~\re{PTE-4}.

The application in toric geometry is completely analogous. A Laurent polynomial $f \in \C[x^{\pm 1},y^{\pm 1}]$ of the form 
$$f = \sum_{(a,b) \in S} c_{a,b} x^a y^b,\ \ \text{with}\  c_{a,b} \in \C\setminus \{0\}$$ defines the curve 
$V(f) := \setcond{ (x,y) \in (\C \setminus \{0\})^2}{f(x,y) = 0}$ in the algebraic torus $(\C \setminus \{0\})^2$. As before, the finite subset $S\subset\Z^2$ 
is the support of $f$ and its convex hull $P_f:=\conv(S)$ is the Newton polytope of $f$.

The next theorem is a 2-dimensional case of the BKK theorem for the number of solutions of generic polynomial systems with fixed supports.
In order to make the notion ``generic'' precise, we recall the definition of a {\it facial subsystem} in our situation.
Let $f$ be a bivariate Laurent polynomial with Newton polytope $P_f$. Given $u\in\Ss^1$, let
$P_f^{u}=\max\{\langle u, v\rangle\,:\,v \in P_f\}$ be the corresponding face of $P_f$. Furthermore, let $f^u$ be the Laurent
polynomial consisting of those terms of $f$ whose support lies in $P_f^{u}$
$$f ^u= \sum_{(a,b) \in S\cap P_f^u} c_{a,b} x^a y^b.$$
If $f,g$ are two Laurent polynomials then
any $u\in\Ss^1$ defines a facial subsystem $f^u=g^u=0$.

\begin{thm}[BKK theorem in the plane]\label{T:BKK-toric}
	Let $f, g \in \C[x^{\pm 1},y^{\pm 1}]$ be Laurent polynomials with Newton polytopes $P_f$ and $P_g$ such that
	for any  $u\in\Ss^1$ the facial subsystem $f^u=g^u=0$ has no solutions in $(\C \setminus \{0\})^2$.
	Then $V(f) \cap V(g)$ is a subset of $(\C \setminus \{0\})^2$ consisting of exactly $2 \V(P_f,P_g)$ points, counting multiplicities. 
\end{thm}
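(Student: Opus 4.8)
The plan is to compactify the algebraic torus $(\C\setminus\{0\})^2$ to a smooth complete toric surface and to reinterpret the count $\#\bigl(V(f)\cap V(g)\bigr)$ as an intersection number of two divisors, which toric intersection theory identifies with the normalized mixed volume $2\V(P_f,P_g)$.

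First I would choose a complete nonsingular fan $\Sigma$ in $\R^2$ that refines the normal fan of $P_f+P_g$, so that every edge normal of $P_f$ and of $P_g$ spans a ray of $\Sigma$. Let $X=X_\Sigma$ be the associated smooth projective toric surface; the torus $T=(\C\setminus\{0\})^2$ is its dense orbit and $X\setminus T=\bigcup_\rho D_\rho$ is the union of the invariant curves indexed by the rays $\rho$ of $\Sigma$. Because $\Sigma$ refines their normal fans, the lattice polytopes $P_f$ and $P_g$ determine base-point-free invariant Cartier divisors $L_f$ and $L_g$ on $X$ whose spaces of global sections are spanned by the monomials supported in $P_f$ and $P_g$, respectively. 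The section attached to $f$ then cuts out the Zariski closure $\overline{V(f)}$ of $V(f)$ inside $T$, and likewise for $g$.

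Next, toric intersection theory evaluates the intersection numbers of the nef divisors attached to polytopes as normalized mixed volumes; in dimension two this reads $L_f\cdot L_g=2\V(P_f,P_g)$, the factor $2=2!$ being the planar lattice normalization. I would either cite this or derive it by polarization, which fits the spirit of the paper: the Newton polytope of a product is the Minkowski sum $P_{fg}=P_f+P_g$ and $V(fg)=V(f)\cup V(g)$, so the generic intersection count is additive in each factor, and together with the dimension-two polarization identity \re{polarization-2} this reduces the mixed formula to the unmixed Kushnirenko statement $L_P\cdot L_P=2\Vol_2(P)$. Finally I would bring in the hypothesis: the trace of $\overline{V(f)}$ on a boundary curve $D_\rho$ with primitive ray generator $u$ is governed by the facial polynomial $f^{u}$ restricted to the one-dimensional orbit $D_\rho\cong\C\setminus\{0\}$, so a common point of $\overline{V(f)}$ and $\overline{V(g)}$ lying on $D_\rho$ would yield a common solution of $f^{u}=g^{u}=0$ in the torus, which is excluded. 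Hence $\overline{V(f)}\cap\overline{V(g)}\subseteq T$; in particular the two curves share no component (a common component would be a complete curve contained in the affine torus $T$, which is impossible), so they meet properly, the total intersection number is concentrated in $T$, and Bézout on the smooth complete surface $X$ gives $\#\bigl(V(f)\cap V(g)\bigr)=2\V(P_f,P_g)$, counted with local multiplicities.

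The step I expect to be delicate is reconciling the class-level quantity $L_f\cdot L_g$ computed in $\mathrm{Pic}(X)$ with the actual number of torus solutions of the specific curves $\overline{V(f)},\overline{V(g)}$. The section of $L_f$ coming from $f$ generally vanishes along boundary divisors as well, so $[\overline{V(f)}]=L_f-E_f$ with $E_f$ effective and supported on $X\setminus T$; one must show that these boundary contributions $E_f,E_g$ do not affect the count inside $T$, which is precisely where the facial condition is used and where Bernstein's theorem is genuinely nontrivial. Verifying the normalization and the agreement of algebraic intersection multiplicities with the geometric ones at the torus points is the remaining technical, but routine, bookkeeping.
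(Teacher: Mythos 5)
The paper does not prove this theorem: it is the classical Bernstein--Khovanskii--Kushnirenko theorem in dimension two, stated as a known result and cited to \cite{Bernstein75, Kh78, Ku76}, so there is no in-paper argument to compare yours against. Your sketch is the standard toric proof and is correct in outline: compactify to a smooth toric surface $X$ refining the normal fan of $P_f+P_g$, identify $L_f\cdot L_g$ with $2\V(P_f,P_g)$ (your polarization reduction to the unmixed Kushnirenko case via $(L_f+L_g)^2=L_f^2+2L_f\cdot L_g+L_g^2$ is legitimate), and use the facial hypothesis to push all intersection points into the torus. Two remarks that would tighten it. First, the ``delicate step'' you flag largely evaporates: since $P_f$ is by definition the Newton polytope of $f$ itself, the zero divisor of the section of $L_f$ determined by $f$ is exactly $\overline{V(f)}$ with no effective boundary correction, i.e.\ $E_f=0$ (the order of vanishing of $f$ along $D_\rho$ is precisely $\min_{m\in P_f}\langle m,u_\rho\rangle$, which the twist by $L_f$ cancels); the genuinely nontrivial content sits entirely in the boundary analysis, not in a discrepancy between $[\overline{V(f)}]$ and $L_f$. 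Second, your boundary argument as written only treats points of $\overline{V(f)}\cap\overline{V(g)}$ lying in the open orbit $O_\rho$ of a single $D_\rho$; you should add that $\overline{V(f)}$ avoids every torus-fixed point $x_\sigma$, because in the affine chart $U_\sigma$ the local equation of $\overline{V(f)}$ takes the nonzero value $c_{m_\sigma}$ at $x_\sigma$, where $m_\sigma$ is the vertex of $P_f$ selected by $\sigma$. With that, every boundary point of the intersection does lie on some $O_\rho$ and lifts to a torus solution of $f^{u}=g^{u}=0$, your completeness argument excludes common components, and the count follows.
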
 

The condition of \rt{BKK-toric} is called the BKK condition. Note that using an appropriate
monomial change of variables each facial subsystem $f^u=g^u=0$ can be turned to a univariate system. It has a solution
if and only if the resultant of the two polynomials vanishes. Therefore, the BKK condition is satisfied 
for any $f,g$ whose coefficients lie outside of an algebraic hypersurface in the coefficient space.

\rt{BKK-toric} provides yet another interpretation of $\cI(n,2)$:
\[
	\cI(n,2) := \setcond{ | V(f) \cap V(g) | }{f, g \in \C[x^{\pm 1},y^{\pm 1}] \ \text{satisfy the BKK condition} }. 
\]
In light of this interpretation, Corollary~\ref{C:I:4-2} can be viewed as a toric geometry result.

We end with an example illustrating  \rc{I:4-2}.

\begin{ex} 
Consider four tropical curves given by tropical Laurent polynomials $f_1,\dots,f_4$ with
Newton polytopes 
\begin{align*}
P_1&=\conv\{0,e_1,e_2\},\quad\quad\quad\quad\ \, P_2=\conv\{0,2e_1,2e_2,2e_1+e_2,e_1+2e_2\}\\
P_3&=\conv\{0,e_1,e_2,e_1+e_2\},\quad P_4=\conv\{0,3e_1,2e_2,3e_1+e_2,2e_1+2e_2\}.
\end{align*}
We choose the support $S_i$ to be the set of lattice points in $P_i$, for $i\in[4]$, and define the heights according to the lifting function
$h_i: (a,b)\mapsto c_{a,b}$, where we choose 
\begin{align*}
h_1&=a^2+b^2+(a+b)^2,\quad\quad\quad\quad\quad\  h_2=a^2+b^2+(a+b)^2+a-2b\\
h_3&=a^2+b^2+(a+b)^2-a+b,\quad\quad  h_4=a^2+b^2+(a+b)^2+2a-7b.
\end{align*}
We depict the configuration of tropical curves $C=(C_1,\dots, C_4)$, where $C_i=V(f_i)$,  
in \rf{tropical-curves}. 
\begin{figure}[h]
\begin{center}
\includegraphics[scale=.85]{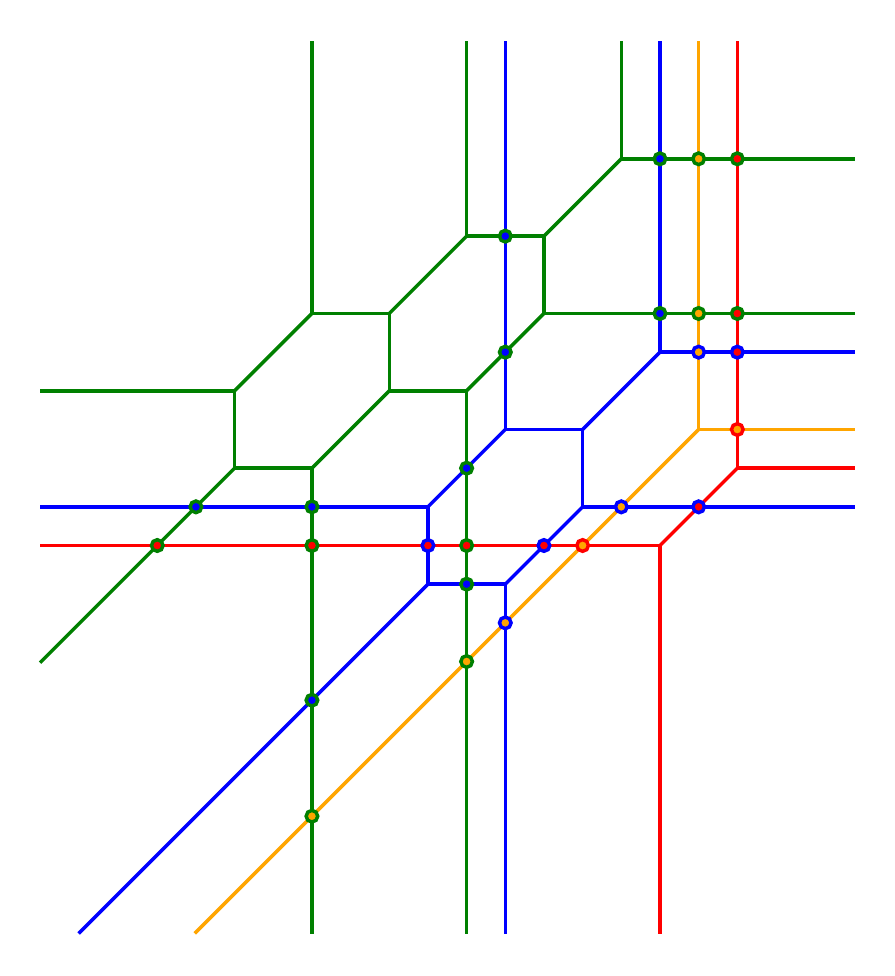}
\end{center}
\caption{A transversal arrangement of four tropical curves and their intersections}
\label{F:tropical-curves}
\end{figure}
We use orange for $C_1$, blue for $C_2$, red for $C_3$, and green for $C_4$.
Note that all intersections are transversal, all
edges have weight one, and all local multiplicities are one as well. 
We obtain the intersection vector
$$I_C=(I_{12},I_{13},I_{14},I_{23},I_{24},I_{34})=(3,2,4,4,9,5),$$
where $I_{ij}=I(C_i,C_j)$. We see that the three products 
$$I_{12}I_{34}=15,\quad I_{13}I_{24}=18,\quad I_{14}I_{23}=16$$
satisfy the three Pl\"ucker-type (triangle) inequalities. 
The Newton polytopes $P_1,\dots, P_n$ with the corresponding regular subdivisions are shown in \rf{subdivisions}.
The intersection number $I_{ij}$, which coincides with the normalized mixed area $2\V(P_i,P_j)$, labels the edge
between $P_i$ and $P_j$, for all $\{i,j\}\subset[4]$.
\begin{figure}[h]
\begin{center}
\includegraphics[scale=.3]{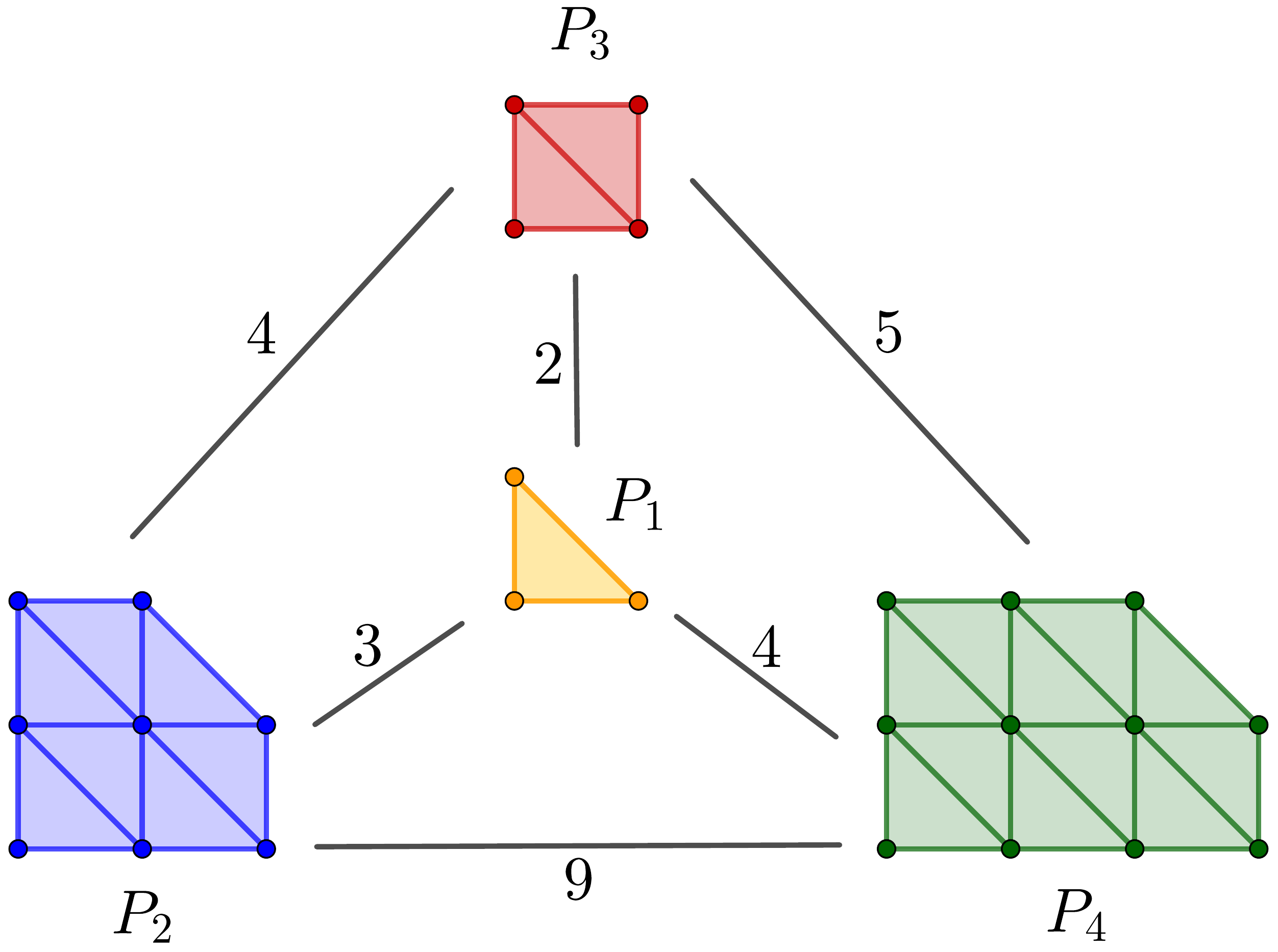}
\end{center}
\caption{Four regular subdivisions of Newton polytopes and the six normalized mixed areas}
\label{F:subdivisions}
\end{figure}
\end{ex}

\section{Appendix}\label{appendix}

In this appendix we give a formal proof of the semialgebraicity of the area and mixed area functionals.

Recall that a {\it first-order formula (over reals)} is a formula obtained from a finite set of polynomial inequalities $f(x)\geq 0$ for $f\in\R[x_1,\dots,x_n]$ by applying a finite number of negations $\neg$, conjunctions $\wedge$, disjunctions $\lor$, and quantifiers $\exists$, $\forall$ on the variables. See \cite[Sec 2]{BCR} for details.

\begin{prop}\label{P:semialg-technical} Let $s,t\in\N$. Consider the following conditions and functions depending on
$p_1,\dots,p_s,q_1,\dots,q_t\in\R^2$:
\begin{enumerate}
\item[(a)] $\text{\rm OrdVertSet}(p_1,\dots,p_s):=$``$p_1,\dots,p_s$ are vertices of a convex $s$-gon listed
in an counterclockwise order'',
\item[(b)] $\text{\rm EqConv}(p_1,\dots,p_s;q_1,\dots,q_t):=$``$\conv(p_1,\dots,p_s)$ equals $\conv(q_1,\dots,q_t)$'',
\item[(c)] $\text{\rm VolConv}(p_1,\dots,p_s):=\Vol_2(\conv(p_1,\dots,p_s))$,
\item[(d)] $\text{\rm VolSumConv}(p_1,\dots,p_s;q_1,\dots,q_t):=\Vol_2(\conv(p_1,\dots,p_s)+\conv(q_1,\dots,q_t))$,
\item[(e)] $\text{\rm MixedArea}(p_1,\dots,p_s;q_1,\dots,q_t):=\V(\conv(p_1,\dots,p_s),\conv(q_1,\dots,q_t))$.

\end{enumerate}
Then the conditions (a) and (b) can be expressed by first-order formulas and the functions (c), (d) and (e) are semialgebraic.
\end{prop}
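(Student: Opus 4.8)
The plan is to reduce everything to the Tarski--Seidenberg principle, which says that the first-order definable subsets of $\R^k$ are exactly the semialgebraic sets, that a function is semialgebraic precisely when its graph is, and that the semialgebraic functions are closed under addition, scalar multiplication, and precomposition with polynomial maps \cite[Sec~2]{BCR}. Once the conditions (a) and (b) are exhibited as first-order formulas, the strategy for (c) is to express the area as a finite disjunction over all choices of an ordered vertex subset, using (a), (b), and the shoelace polynomial; then (d) and (e) follow from (c) by these closure properties together with two structural identities.

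For (a) I would avoid the naive ``all consecutive turns are left turns'' condition, since a self-intersecting ordering (for instance a pentagram) also has that property. Instead, $\mathrm{OrdVertSet}(p_1,\dots,p_s)$ is the quantifier-free formula demanding that every other point lie strictly to the left of each directed edge,
$$\bigwedge_{i=1}^{s}\ \bigwedge_{j\notin\{i,i+1\}}\det\!\big(p_{i+1}-p_i,\ p_j-p_i\big)>0,$$
with indices taken modulo $s$. Each determinant is a polynomial in the coordinates, so this is a first-order (indeed quantifier-free) formula, and it holds exactly when the $p_i$ are the vertices of a convex $s$-gon traced counterclockwise: every edge line supports the configuration, all $s$ points are genuine vertices by strictness, and the traversal is simple and positively oriented. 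For (b), membership $p_i\in\conv(q_1,\dots,q_t)$ is the first-order formula asserting the existence of $\lambda_1,\dots,\lambda_t\ge 0$ with $\sum_k\lambda_k=1$ and $\sum_k\lambda_k q_k=p_i$; then $\mathrm{EqConv}$ is the conjunction of $\bigwedge_i\, p_i\in\conv(q_1,\dots,q_t)$ and $\bigwedge_j\, q_j\in\conv(p_1,\dots,p_s)$, which is again first-order.

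For (c), recall that for a counterclockwise convex polygon with ordered vertices $q_1,\dots,q_r$ the area equals the shoelace polynomial $\mathrm{Sh}(q_1,\dots,q_r)=\tfrac12\sum_{k=1}^{r}\det(q_k,q_{k+1})$ (indices mod $r$). Writing $P=(p_1,\dots,p_s)$ and $P_\sigma=(p_{\sigma(1)},\dots,p_{\sigma(r)})$, I would define the graph of $\mathrm{VolConv}$ by the finite disjunction, over all $3\le r\le s$ and all injections $\sigma\colon[r]\to[s]$, of
$$\mathrm{OrdVertSet}(P_\sigma)\ \wedge\ \mathrm{EqConv}(P_\sigma;P)\ \wedge\ \big(z=\mathrm{Sh}(P_\sigma)\big),$$
together with the degenerate disjunct ``$\det(p_j-p_1,p_k-p_1)=0$ for all $j,k$, and $z=0$'' covering configurations of affine dimension at most one. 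In any nondegenerate disjunct the chosen points are exactly the extreme points of $\conv(p_1,\dots,p_s)$ in counterclockwise order, so $\mathrm{Sh}$ returns the true area; the whole disjunction is a first-order formula, whence its solution set is semialgebraic and $\mathrm{VolConv}$ is a semialgebraic function.

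Finally, (d) and (e) are purely formal. Since $\conv(p_1,\dots,p_s)+\conv(q_1,\dots,q_t)=\conv\big(\{p_i+q_j\}_{i,j}\big)$ and each $p_i+q_j$ is a linear polynomial in the inputs, $\mathrm{VolSumConv}$ is the composition of $\mathrm{VolConv}$ on $st$ points with a polynomial map, hence semialgebraic. For (e), the polarization identity \re{polarization-2} gives $\mathrm{MixedArea}=\tfrac12\big(\mathrm{VolSumConv}-\mathrm{VolConv}(p_\bullet)-\mathrm{VolConv}(q_\bullet)\big)$, a linear combination of semialgebraic functions and therefore semialgebraic. The main obstacle is conceptual rather than computational: pinning down a correct first-order characterization in (a) (the pentagram subtlety) and reducing the area of an arbitrary point set's hull to the shoelace formula over the right ordered vertex subset while cleanly handling all degeneracies; the remaining steps are routine invocations of the closure properties of the semialgebraic functions.
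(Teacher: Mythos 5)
Your proposal is correct and follows essentially the same route as the paper: the identical edge-determinant formula for (a), convex-combination membership for (b), a finite disjunction over ordered vertex selections combined with a polynomial area formula for (c), and the Minkowski-sum and polarization identities for (d) and (e). The only cosmetic differences are that the paper existentially quantifies over auxiliary vertex points rather than enumerating injections into the given points, and uses a fan triangulation instead of the shoelace formula.
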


\begin{pf} (a) As the cases $s=1,2$ are trivial, so we assume $s\geq 3$. Then we have the following first-order formula 
for $\text{\rm OrdVertSet}(p_1,\dots,p_s)$:
$$\text{\rm OrdVertSet}(p_1,\dots,p_s)= \bigwedge_{i=1}^s\bigwedge_{j\in[s]\setminus\{i,i+1\}}\det(p_{i+1}-p_{i},p_j-p_i)>0,$$
where the indices are considered modulo $s$.

(b) Note that $\conv(p_1,\dots,p_s)$ equals $\conv(q_1,\dots,q_t)$ if and only if $p_1,\dots,p_s$ lie in
$\conv(q_1,\dots,q_t)$ and $q_1,\dots q_t$ lie in $\conv(p_1,\dots,p_s)$. Therefore we can express
$\text{\rm EqConv}(p_1,\dots,p_s;q_1,\dots,q_t)$ as the conjunction of 
$$\bigwedge_{i=1}^s\left(\exists \lambda_{i_1},\dots,\exists\lambda_{i_t}\in\R_{\geq 0} : (\lambda_{i_1}+\dots+\lambda_{i_t}=1)\wedge(p_i=\lambda_{i_1}q_1+\dots+\lambda_{i_t}q_t) \right)$$
and
$$\bigwedge_{j=1}^t\left(\exists \mu_{j_1},\dots,\exists\mu_{j_s}\in\R_{\geq 0} : (\mu_{j_1}+\dots+\mu_{j_s}=1)\wedge(q_j=\mu_{j_1}p_1+\dots+\mu_{j_s}p_s) \right),$$
which is a first-order formula.

(c) Let $P=\conv(p_1,\dots,p_s)$ and assume that $q_1,\dots,q_t$ are the vertices of $P$ in the counterclockwise order, in particular, $\text{\rm OrdVertSet}(q_1,\dots,q_t)$ is true. Then $\Vol_2(P)=0$ for  $t=1,2$ and
$$
\Vol_2(P)=\frac{1}{2}\sum_{i=2}^{t-1}\det(q_{i+1}-q_1,q_i-q_1), \text{ for }t\geq 3,
$$
which is clearly a polynomial in the coordinates of the $q_i$. Now, 
since $P$ has at most $s$ vertices, the statement
$$\exists q_1\in\R^2,\dots, \exists q_s\in\R^2\,:\,\bigvee_{t=1}^s\left(\text{\rm OrdVertSet}(q_1,\dots,q_t)\wedge\text{\rm EqConv}(p_1,\dots,p_s;q_1,\dots,q_t)\right)$$
is true. Therefore, for $V\in\R_{\geq 0}$ we can express the condition $V=\Vol_2(P)$ as
\begin{align*}
& \exists q_1\in\R^2,\dots, \exists q_s\in\R^2\,:\, \\
& \bigvee_{t=1}^s\Big(\text{\rm OrdVertSet}(q_1,\dots,q_t)\wedge\text{\rm EqConv}(p_1,\dots,p_s;q_1,\dots,q_t)
\wedge \big(V=\frac{1}{2}\sum_{i=2}^{t-1}\det(q_{i+1}-q_1,q_i-q_1)\big)\Big),
\end{align*}
which is a first-order formula. This shows that the graph of the function $\text{\rm VolConv}(p_1,\dots,p_s)$
is semialgebraic, i.e. it is a semialgebraic function.

(d) We have $\conv(p_1,\dots,p_s)+\conv(q_1,\dots,q_t)=\conv(p_i+q_j : 1\leq i\leq s, 1\leq j\leq t)$. Thus, 
the function $\text{\rm VolSumConv}$ is the composition of the linear map 
$$\phi:(\R^{2})^{s+t}\to (\R^{2})^{st}, \quad (p_1,\dots, p_s,q_1,\dots, q_t)\mapsto (p_i+q_j : 1\leq i\leq s, 1\leq j\leq t)$$
and the function $\text{\rm VolConv}$ from part (c). Therefore, the semialgebraicity of $\text{\rm VolSumConv}$ 
follows from that of $\text{\rm VolConv}$.

(e) This immediately follows from parts (c) and (d) and the formula \re{polarization-2} for the mixed area
$$\V(P_1,P_2)=\frac{1}{2}\left(\Vol_2(P_1+P_2)-\Vol_2(P_1)-\Vol_2(P_2)\right).$$

\end{pf}

We remark that with some additional work the proofs can be extended to the case of $d$-dimensional volume and mixed volume, but
it is not needed for the purposes of this paper.

\bibliographystyle{amsalpha}
\bibliography{lit}

\end{document}